\documentclass[leqno,12pt]{amsart} 
\usepackage[top=30truemm,bottom=30truemm,left=25truemm,right=25truemm]{geometry}
\usepackage{amssymb}
\usepackage{amsmath}
\usepackage{amsthm}
\usepackage{amscd}
\usepackage{mathrsfs}
\usepackage{graphicx}
\usepackage[dvips]{color}
\usepackage[all]{xy}
\usepackage{url}
\setcounter{tocdepth}{1}
  \makeatletter

\@addtoreset{equation}{section}
\@namedef{subjclassname@2020}{\textup{2020} Mathematics Subject Classification}
\makeatother
%
%
%
\theoremstyle{plain} 
\newtheorem{theorem}{\indent\bf Theorem}[section]
\newtheorem{lemma}[theorem]{\indent\bf Lemma}

\newtheorem{proposition}[theorem]{\indent\bf Proposition}

\newtheorem{conjecture}[theorem]{\indent\bf Conjecture}
\theoremstyle{definition} 
\newtheorem{definition}[theorem]{\indent\bf Definition}
\newtheorem{remark}[theorem]{\indent\bf Remark}
\newtheorem{example}[theorem]{\indent\bf Example}

\newtheorem{question}[theorem]{\indent\bf Question}

%

\newtheorem*{notation}{\indent\bf Notation}
%


\newcommand{\ddbar}{\partial \bar{\partial}}
\newcommand{\dbar}{\bar{\partial}}
\newcommand{\ai}{\sqrt{-1}}

\newcommand{\llangle}{\langle\!\langle}
\newcommand{\rrangle}{\rangle\!\rangle}

\begin{document}
\pagestyle{plain}
\thispagestyle{plain}

\title[Nakano positivity and vanishing theorems]
{Nakano positivity of singular Hermitian metrics and vanishing theorems of Demailly-Nadel-Nakano type}

\author[T. INAYAMA]{Takahiro INAYAMA}
\address{Department of Mathematics\\
	Faculty of Science and Technology\\
Tokyo University of Science\\
2641 Yamazaki, Noda\\
Chiba, 278-8510\\
Japan
}
\email{inayama\_takahiro@ma.noda.tus.ac.jp}
\email{inayama570@gmail.com}
\subjclass[2020]{32L20, 32U05, 14F17, 14F18}
\keywords{ 
singular Hermitian metric, Nakano positivity, cohomology vanishing.
}

\begin{abstract}
In this article, we propose a definition of Nakano semi-positivity of singular Hermitian metrics on holomorphic vector bundles. 
By using this positivity notion, we establish $L^2$-estimates for holomorphic vector bundles with Nakano positive singular Hermitian metrics. 
We show vanishing theorems, which generalize both Nakano type and Demailly-Nadel type vanishing theorems. 
As applications, we specifically construct globally Nakano semi-positive singular Hermitian metrics for several bundles, and prove vanishing theorems associated with them. 
\end{abstract}


\maketitle
\setcounter{tocdepth}{2}
\tableofcontents

\section{Introduction}
In algebraic and complex geometry, positivity notions for holomorphic vector bundles have played an important role.
Among them, a notion of positivity for singular Hermitian metrics has produced many significant results. 
%
On holomorphic line bundles, positivity of a singular Hermitian metric corresponds to plurisubharmonicity of the local weight. 
Hence, we can apply complex analytic methods to the research in the field of complex algebraic geometry. 
For holomorphic vector bundles, notions of singular Hermitian metrics were also introduced and investigated (cf. \cite{deC98}, \cite{BP08}). 

However, it turns out that we cannot always define the curvature currents with measure coefficients \cite{Rau15}. 
Hence, we need to define positivity notions without using curvature currents. 
We have such a characterization for Griffiths semi-positivity or semi-negativity (see Proposition \ref{prop:grif}). 
On the other hand, it was not known the way to define Nakano positivity of singular Hermitian metrics without using the expression of the curvature currents. 

Our main purpose in this article is to propose definitions of Nakano semi-positivity of singular Hermitian metrics on vector bundles (Definition \ref{def:nakanosemipositive} and \ref{def:localnakano}) 
and to establish a vanishing theorem (Theorem \ref{thm:vanishing}), which generalizes both the Nakano and the Demailly-Nadel vanishing theorems. 
These definitions are based on  $L^2$-theoretic characterizations of positivity, which were recently developed by the authors in \cite{DWZZ18}, \cite{DWZZ19}, \cite{HI19}, \cite{DNW19}, \cite{DNWZ20}. 


Throughout this paper, we let $X$ be an $n$-dimensional complex manifold, 
let $E\to X$ be a holomorphic vector bundle of finite rank $r>0$, 
and let $h$ be a singular Hermitian metric on $E$ (see Definition \ref{def:shm}). 

%
First, modifying the optimal $L^2$-estimate condition in \cite{DNWZ20}, we define the following positivity notions. 

\begin{definition}\label{def:nakanosemipositive}
Suppose that $h$ is a Griffiths semi-positive singular Hermitian metric. 
We say that $h$ is {\it globally Nakano semi-positive in the sense of singular Hermitian metrics} or simply {\it globally Nakano semi-positive} if 
for any Stein coordinate $(\Omega, \iota)$ around any point $x\in X$ (see Definition \ref{def:steincoordinate}) such that $E|_{\iota (\Omega)}$ is trivial on $\iota(\Omega)$, 
for any K\"ahler form $\omega_\Omega$ on $\Omega$, 
for any smooth strictly plurisubharmonic function $\psi$ on $\Omega$, for any positive integer $q$ such that $1\leq q \leq n$, and for any $\dbar$-closed $f\in L^2_{(n, q)}(\Omega, \iota^\star E ; \omega_\Omega, (\iota^\star h)e^{-\psi})$, 
there exists $u\in L^2_{(n, q-1)}(\Omega, \iota^\star E ; \omega_\Omega, (\iota^\star h) e^{-\psi})$ satisfying $\dbar u =f$ and 
$$
\int_\Omega |u|^2_{(\omega_\Omega, \iota^\star h)}e^{-\psi}dV_{\omega_\Omega} \leq \int_\Omega \langle B^{-1}_{\omega_\Omega, \psi}f, f \rangle_{(\omega_\Omega, \iota^\star h)}e^{-\psi} dV_{\omega_\Omega}, 
$$
where 
$B_{\omega_\Omega, \psi} = [\ai \ddbar \psi\otimes Id_E, \Lambda_{\omega_\Omega} ]$. 
Here we suppose that the right-hand side is finite (for detailed notation, see Notation in Section \ref{sec:prelimi}).

\end{definition}

\begin{definition}\label{def:localnakano}
Suppose that $h$ is a Griffiths semi-positive singular Hermitian metric. 
We say that $h$ is {\it locally Nakano semi-positive in the sense of singular Hermitian metrics} or 
simply {\it locally Nakano semi-positive} if for any point $x\in X$, there exists an open neighborhood $U$ of $x$ such that 
for any Stein coordinate $(\Omega, \iota)$ around $x$ such that $\iota(\Omega)\subset U$ and $E|_{\iota(\Omega)}$ is trivial, 
the condition in Definition \ref{def:nakanosemipositive} is satisfied on $\Omega$. 
\end{definition}

The condition in Definition \ref{def:nakanosemipositive} is a global property and the condition in 
Definition \ref{def:localnakano} is a local property. 
We clearly see that global Nakano semi-positivity implies local Nakano semi-positivity.
For smooth Hermitian metrics, the above definitions are equivalent (see Proposition \ref{prop:nakano}). 
We consider globally Nakano semi-positive singular Hermitian metrics in this article. 
We propose a problem related to the difference between the above definitions (see Question \ref{ques:globalocal}).


We explain the reason that we use the above condition to define Nakano positivity in Section \ref{sec:prelimi}. 
Here we only assume that $X$ is a complex manifold, not Hermitian or K\"ahler. 
Hence, we can define Nakano semi-positivity in a general setting. 
That is one of the advantages of Definition \ref{def:nakanosemipositive} and \ref{def:localnakano}.


In this setting, we can show the following result, which is a generalization of Demailly and Skoda's theorem \cite{DS} in the singular setting.

\begin{theorem}\label{thm:demaillyskoda}
Let $h$ be a Griffiths semi-positive singular Hermitian metric on $E$. 
Then $h\otimes \det h$ is globally Nakano semi-positive on $E\otimes \det E$. 
We can see that $h\otimes \det h$ is locally Nakano semi-positive as well.
\end{theorem}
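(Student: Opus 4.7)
The plan is to combine a smooth regularization of the singular Griffiths semi-positive metric $h$ with the classical (smooth) Demailly-Skoda theorem and the optimal H\"ormander-Kodaira-Nakano $L^2$-estimate, and then to pass to the limit in order to verify the condition of Definition \ref{def:nakanosemipositive} for $h\otimes\det h$.

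I would first fix the data of Definition \ref{def:nakanosemipositive}: a Stein coordinate $(\Omega,\iota)$ with $E$ trivial over $\iota(\Omega)$, a K\"ahler form $\omega_\Omega$, a smooth strictly plurisubharmonic $\psi$, an integer $q\ge 1$, and a $\dbar$-closed $f$ for which the right-hand side is finite. Identifying $\Omega$ with $\iota(\Omega)$ and dropping $\iota^{\star}$, I would exhaust $\Omega$ by relatively compact Stein subsets $\Omega_j\subset\subset\Omega$ and, on each $\Omega_j$, invoke a smooth regularization of the Griffiths semi-positive singular Hermitian metric $h$ (available from the theory of singular Hermitian metrics on vector bundles developed in the references of the introduction) to obtain a sequence $\{h_\nu\}$ of smooth Griffiths semi-positive Hermitian metrics on $E|_{\Omega_j}$ with $h_\nu\nearrow h$ pointwise on fibres. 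Since the determinant is monotone on positive-definite Hermitian forms, one gets $\det h_\nu\nearrow\det h$ and hence $h_\nu\otimes\det h_\nu\nearrow h\otimes\det h$.

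For each $\nu$ the classical smooth Demailly-Skoda theorem ensures that $h_\nu\otimes\det h_\nu$ is Nakano semi-positive in the usual curvature sense on $\Omega_j$. I would then apply the optimal smooth $L^2$-estimate on the Stein manifold $\Omega_j$, twisted by $\psi$, to produce $u_\nu$ solving $\dbar u_\nu=f|_{\Omega_j}$ with
\begin{equation*}
\int_{\Omega_j}|u_\nu|^2_{(\omega_\Omega,h_\nu\otimes\det h_\nu)}e^{-\psi}\,dV_{\omega_\Omega}\le\int_{\Omega_j}\langle B^{-1}_{\omega_\Omega,\psi}f,f\rangle_{(\omega_\Omega,h_\nu\otimes\det h_\nu)}e^{-\psi}\,dV_{\omega_\Omega}.
\end{equation*}

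To pass to the limit, monotone convergence will send the right-hand side to $C_j:=\int_{\Omega_j}\langle B^{-1}_{\omega_\Omega,\psi}f,f\rangle_{(\omega_\Omega,h\otimes\det h)}e^{-\psi}\,dV_{\omega_\Omega}\le C$, where $C$ is the analogous quantity over $\Omega$ (finite by assumption). For each fixed $N$ the inequality $h_\nu\otimes\det h_\nu\ge h_N\otimes\det h_N$ (valid for $\nu\ge N$) makes $\{u_\nu\}_{\nu\ge N}$ uniformly bounded in $L^2(\Omega_j,h_N\otimes\det h_N)$; a diagonal weak-compactness extraction then yields a weak limit $u_j$ with $\dbar u_j=f|_{\Omega_j}$, and weak lower semi-continuity followed by monotone convergence in $N$ gives $\int_{\Omega_j}|u_j|^2_{h\otimes\det h}e^{-\psi}\,dV\le C_j$. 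A further exhaustion-diagonal step in $j$ assembles the $u_j$ into a solution $u$ on $\Omega$ with $\dbar u=f$ and the required bound. The hardest part will be arranging the smooth regularization $h_\nu\nearrow h$: supplying such smooth Griffiths semi-positive approximants for a singular Griffiths semi-positive metric is the essential technical input, which must be imported from the existing theory of singular Hermitian metrics cited in the introduction; once this is in hand, the remainder is a clean combination of smooth Demailly-Skoda with classical $L^2$-theory and a standard limit argument.
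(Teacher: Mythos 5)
Your overall scheme (regularize, apply smooth Demailly--Skoda, solve $\dbar$ with the optimal estimate for the smooth approximants on a Stein exhaustion, then pass to the limit by weak compactness, a diagonal extraction and monotone convergence) is the same as the paper's, and that limiting part of your argument is sound. The genuine gap is exactly at the step you flag as "the hardest part" and then assume can be imported: a sequence of \emph{smooth} Griffiths semi-positive metrics $h_\nu \nearrow h$ on $E|_{\Omega_j}$ is \emph{not} available from the existing theory in the generality you need. The Berndtsson--P\u{a}un regularization (Proposition \ref{prop:bp}) is produced by convolution with an approximate identity, so it requires (a) a \emph{trivial} bundle and (b) a base that is a \emph{domain in} $\mathbb{C}^n$, where translations make sense. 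In Definition \ref{def:nakanosemipositive} applied to $h\otimes\det h$, the Stein coordinate $(\Omega,\iota)$ is an abstract Stein manifold on which only $E\otimes\det E$ is assumed trivial; $E$ itself need not be trivial over $\Omega$ (so regularizing $h$ on $E|_{\Omega_j}$ is not even well posed by convolution), and $\Omega_j$ is not a Euclidean domain, so the convolution technique does not apply directly in any case.

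The paper's proof is built precisely to circumvent these two obstructions, and your proposal is missing both devices. First, it embeds $\Omega$ in $\mathbb{C}^N$ and uses Siu's theorem (Theorem \ref{thm:siu}) to obtain a holomorphic retraction $p:U\to\Omega$ from a Euclidean neighborhood, so that the metric can be pulled back to a trivial bundle over a domain in $\mathbb{C}^N$ where convolution is legitimate, and then restricted back along $i$. Second, instead of regularizing $h$ on $E$, it regularizes the twisted metric $h\otimes\det h\,e^{\Psi/(r+1)}$ on the \emph{trivial} bundle $F=E\otimes\det E$ (Griffiths semi-positive by Proposition \ref{prop:detail}), obtaining smooth Griffiths-positive $g_\nu$, and applies smooth Demailly--Skoda to $F$ itself: $g_\nu\otimes\det g_\nu$ is Nakano semi-positive on $F\otimes\det F$, and the factor $e^{\Psi/(r+1)}$ is chosen so that, under the canonical trivialization of $\det F=(\det E)^{\otimes r+1}$ (whose weight is $e^{-\Psi}$), the sequence $g_\nu\otimes\det g_\nu$ increases exactly to $h\otimes\det h$. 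Without the retraction and without this twist, your step "obtain $h_\nu\nearrow h$ Griffiths semi-positive and smooth on $\Omega_j$" has no supporting result to cite (indeed, regularization of Griffiths-positive singular metrics beyond trivial bundles over Euclidean domains is an open issue, cf.\ Question \ref{ques:kinji} in spirit), so the proof as proposed does not go through; everything after that point, however, matches the paper's limiting argument.
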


Next, we consider the case that $X$ admits a K\"ahler metric $\omega_X$. 
In this situation, we can define strict Nakano positivity for singular Hermitian metrics in a simple way (see Definition \ref{def:strictnakano}). 
By using this notion, we prove the following $L^2$-estimate.
For simplicity, unless otherwise stated, if $X$ is a projective manifold, we fix the K\"ahler metric $\omega_X$ on $X$ as $\omega_X=\sqrt{-1}\Theta_{h_A}$ for some ample line bundle $(A, h_A)$ (since $\omega_X$ is only referenced to define strict Griffiths or Nakano $\delta_{\omega_X}$-positivity on $X$, this  does not alter the situation in any meaningful way). 

\begin{theorem}\label{thm:l2}
Let $(X, \omega_X)$ be a projective manifold and a K\"ahler metric on $X$, and $q$ be a positive integer. We assume that $(E, h)$ is globally strictly Nakano $\delta_{\omega_X}$-positive in the sense of Definition \ref{def:strictnakano} on $X$. 
Then for any $\dbar$-closed $f\in L^2_{(n, q)}(X, E ; \omega_X, h)$, there exists $u\in L^2_{(n, q-1)}(X, E ; \omega_X, h)$ satisfying $\dbar u=f$ and 
$$
\int_X |u|^2_{(\omega_X, h)}dV_{\omega_X} \leq \frac{1}{\delta q}\int_X |f|^2_{(\omega_X, h)}dV_{\omega_X}.
$$
\end{theorem}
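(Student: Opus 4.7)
The plan is to deduce the global $L^2$-estimate on the compact manifold $X$ from the local $L^2$-estimate on Stein coordinate charts that is built into the hypothesis of global strict Nakano $\delta_{\omega_X}$-positivity, using the classical Stein-complement trick of Demailly.

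First, I make the local content of the hypothesis explicit. Let $(\Omega, \iota)$ be a Stein coordinate around a point of $X$ on which $E$ is trivial and $\omega_X|_{\iota(\Omega)} = \ai\ddbar\rho$ for some smooth Kähler potential $\rho$. Setting $\omega_\Omega = \iota^{\star}\omega_X$ and $\psi = \delta\rho$ in the strict Nakano $\delta_{\omega_X}$-positivity condition (Definition~\ref{def:strictnakano}) produces, for every $\dbar$-closed $f\in L^2_{(n,q)}(\Omega, \iota^{\star}E; \omega_X, h)$, a solution $u$ of $\dbar u = f$ on $\Omega$ satisfying
\[
\int_\Omega |u|^2_{(\omega_X, \iota^{\star}h)}\, e^{-\delta\rho}\, dV_{\omega_X} \;\leq\; \frac{1}{\delta q}\int_\Omega |f|^2_{(\omega_X, \iota^{\star}h)}\, e^{-\delta\rho}\, dV_{\omega_X},
\]
because $B_{\omega_\Omega,\,\delta\rho} = \delta\,[\omega_X \otimes \mathrm{Id}_E,\Lambda_{\omega_X}]$ acts as the scalar $\delta q$ on $E$-valued $(n,q)$-forms. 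This is the local form of the estimate we want, with the sharp constant.

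Second, since $X$ is projective I choose a smooth ample divisor $D\subset X$ with defining section $s$, so that $Y := X\setminus D$ is Stein. On $Y$, form a family of complete Kähler metrics $\widetilde{\omega}_\varepsilon := \omega_X + \varepsilon\,\ai\ddbar \chi(\log|s|^{-2})$, for a suitable convex $\chi$, with $\widetilde{\omega}_\varepsilon \searrow \omega_X$ on $Y$ as $\varepsilon \to 0$. Cover $Y$ by a locally finite Stein refinement $\{W_j\}$ on which $E$ is trivial and each $W_j$ admits a smooth potential for $\widetilde{\omega}_\varepsilon$. Step~1 applied with $\widetilde{\omega}_\varepsilon$ on each $W_j$ gives local solutions with the sharp constant $1/(\delta q)$. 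Since $Y$ is Stein and $\widetilde{\omega}_\varepsilon$ is complete, I then pass to the $L^2$-minimal solution $u_\varepsilon$ on $Y$, which is orthogonal to $\ker\dbar_{(n,q-1)}$; testing this orthogonality against local smooth compactly supported forms and invoking the optimal local estimates propagates the sharp constant to a global estimate
\[
\int_Y |u_\varepsilon|^2_{(\widetilde\omega_\varepsilon, h)}\, dV_{\widetilde\omega_\varepsilon} \;\leq\; \frac{1}{\delta q}\int_Y |f|^2_{(\widetilde\omega_\varepsilon, h)}\, dV_{\widetilde\omega_\varepsilon}.
\]

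Third, letting $\varepsilon\to 0$: since $D$ has Lebesgue measure zero and $\widetilde{\omega}_\varepsilon \to \omega_X$ locally on $Y$, monotone/dominated convergence together with weak compactness in $L^2_{(n,q-1)}(X, E; \omega_X, h)$ extracts a weak limit $u$ solving $\dbar u = f$ on $X$ with the desired bound. The main obstacle is the middle step: reconciling the hypothesis — which only furnishes local $L^2$-estimates on charts trivializing $E$ — with the global $\dbar$-problem on $(Y,\widetilde{\omega}_\varepsilon)$, where $E$ is in general non-trivial. The key is the \emph{sharpness} of the constant $1/(\delta q)$ in the local estimate; any multiplicative loss would be fatal, so the variational (orthogonality) characterization of the $L^2$-minimal solution, which commutes with sharp local estimates, must be invoked rather than a cruder partition-of-unity patching.
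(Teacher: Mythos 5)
Your first step (extracting the local estimate with the sharp constant $1/(\delta q)$ from the definition by taking $\psi = \delta\varphi$) is correct, and the idea of working on a Stein Zariski-open complement of a divisor is in the right direction. However, there is a genuine gap in your middle step, which you yourself flag as the ``main obstacle'' but do not actually resolve. The claim that patching $L^2$-minimal solutions on a locally finite Stein cover $\{W_j\}$ and ``testing the orthogonality against local smooth compactly supported forms and invoking the optimal local estimates propagates the sharp constant'' is not an argument: this is precisely the localization problem that does \emph{not} go through for $L^2$-estimates without a global curvature input. The local estimates on small $W_j$'s control the $L^2$-minimal solution on each $W_j$, not the restriction to $W_j$ of the global $L^2$-minimal solution on $Y$, and there is no mechanism to compare these with the right constant. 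Without this step the proof collapses.

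The paper avoids the patching problem entirely by exploiting projectivity more aggressively. By Serre's GAGA, $E$ is an algebraic vector bundle, hence there is a single nonempty Zariski-open subset $Z\subset X$ (which one may shrink to be Stein) on which $E|_Z$ is trivial and, being Stein, on which $\omega_X$ has a global K\"ahler potential $\varphi$. Then $(Z, i)$ is itself a Stein coordinate in the sense of Definition~\ref{def:steincoordinate} with $E|_Z$ trivial, so the \emph{global} Nakano $\delta_{\omega_X}$-positivity hypothesis applies directly on all of $Z$ with $\omega_\Omega=\omega_X|_Z$ and $\psi=\delta\varphi$, yielding a solution $u$ with
\[
\int_Z |u|^2_{(\omega_X, h)}\,dV_{\omega_X} \leq \frac{1}{\delta q}\int_Z |f|^2_{(\omega_X, h)}\,dV_{\omega_X}\leq \frac{1}{\delta q}\int_X |f|^2_{(\omega_X, h)}\,dV_{\omega_X}
\]
in one shot, with no cover and no patching. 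Extending $u$ by zero across $X\setminus Z$ and invoking Lemma~\ref{lem:things} (the removable-singularity lemma for $\dbar$ across a hypersurface) completes the proof. In particular, the ``globally Nakano semi-positive'' definition is designed precisely so that it can be invoked on such large Stein coordinates; the complete-metric regularization $\widetilde\omega_\varepsilon$ you introduce is also unnecessary, since the definition allows arbitrary (not necessarily complete) K\"ahler forms on $\Omega$.
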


We also get the following vanishing theorem,
which is a generalization of both the Nakano vanishing theorem and the Demailly-Nadel vanishing theorem. 
\begin{theorem}\label{thm:vanishing}
Let $(X, \omega_X)$ be a projective manifold and a K\"ahler metric on $X$. We assume that $(E, h)$ is globally strictly Nakano $\delta_{\omega_X}$-positive in the sense of Definition \ref{def:strictnakano} on $X$. 
Then the $q$-th cohomology group of $X$ with coefficients in the sheaf of germs of holomorphic sections of $K_X\otimes \mathscr{E}(h)$ vanishes for $q>0 :$ 
$$
H^q(X, K_X\otimes \mathscr{E}(h))=0, 
$$
where $\mathscr{E}(h)$ is the sheaf of germs of locally square integrable holomorphic sections of $E$ with respect to $h$. 
\end{theorem}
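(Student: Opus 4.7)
The plan is to deduce the vanishing from the global $L^2$-existence statement of Theorem \ref{thm:l2} via the standard $L^2$-Dolbeault resolution of $K_X \otimes \mathscr{E}(h)$. I introduce the sheaves $\mathscr{L}^{n,q}(E,h)$ whose sections over $U\subset X$ are $(n,q)$-forms with values in $E$ that, together with their distributional $\dbar$, are locally square integrable with respect to $h$ and a fixed smooth background Hermitian metric on $T_X$. These sheaves are $\mathcal{C}^\infty_X$-modules, hence fine, and there is a natural complex
$$
0 \to K_X \otimes \mathscr{E}(h) \to \mathscr{L}^{n,0}(E,h) \xrightarrow{\dbar} \mathscr{L}^{n,1}(E,h) \xrightarrow{\dbar} \cdots \xrightarrow{\dbar} \mathscr{L}^{n,n}(E,h) \to 0
$$
which is exact at $K_X \otimes \mathscr{E}(h)$ by definition of $\mathscr{E}(h)$.

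The first step is to verify exactness in positive bidegree, which is purely local. Global strict Nakano $\delta_{\omega_X}$-positivity in particular implies global, and thus local, Nakano semi-positivity in the sense of Definition \ref{def:nakanosemipositive}. Given a germ of a $\dbar$-closed section $f$ of $\mathscr{L}^{n,q}(E,h)$ at a point $x$, one picks a sufficiently small Stein coordinate neighborhood $(\Omega,\iota)$ around $x$ on which $E$ is trivialized, endows $\Omega$ with a Kähler form $\omega_\Omega$, and chooses a smooth strictly plurisubharmonic function $\psi$ on $\Omega$ (for instance $\psi=|z|^2$) with bounded oscillation so that $f\in L^2_{(n,q)}(\Omega,\iota^\star E;\omega_\Omega,(\iota^\star h)e^{-\psi})$. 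Definition \ref{def:nakanosemipositive} then produces $u\in L^2_{(n,q-1)}(\Omega,\iota^\star E;\omega_\Omega,(\iota^\star h)e^{-\psi})$ with $\dbar u=f$, proving the required local solvability and hence that $\mathscr{L}^{n,\bullet}(E,h)$ is a fine resolution of $K_X\otimes\mathscr{E}(h)$.

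Granted the resolution, standard sheaf-theoretic computation gives
$$
H^q(X, K_X \otimes \mathscr{E}(h)) \cong \frac{\ker\bigl(\dbar : \Gamma(X,\mathscr{L}^{n,q}(E,h)) \to \Gamma(X,\mathscr{L}^{n,q+1}(E,h))\bigr)}{\operatorname{im}\bigl(\dbar : \Gamma(X,\mathscr{L}^{n,q-1}(E,h)) \to \Gamma(X,\mathscr{L}^{n,q}(E,h))\bigr)}.
$$
Since $X$ is compact, every global section of $\mathscr{L}^{n,q}(E,h)$ is an element of $L^2_{(n,q)}(X,E;\omega_X,h)$. For a $\dbar$-closed such $f$, Theorem \ref{thm:l2} directly produces $u\in L^2_{(n,q-1)}(X,E;\omega_X,h)$ with $\dbar u=f$, so the cohomology class of $f$ is zero and the group vanishes.

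The main obstacle is the local exactness step: one must arrange, for an arbitrary $\dbar$-closed $L^2_{\mathrm{loc}}$ germ, that the hypotheses of Definition \ref{def:nakanosemipositive} are genuinely satisfied on some Stein coordinate. This forces a careful choice of shrinking Stein neighborhood, Kähler form $\omega_\Omega$, and strictly plurisubharmonic weight $\psi$ — together with a standard cutoff or exhaustion argument — so that the finiteness of the right-hand side in Definition \ref{def:nakanosemipositive} is guaranteed and the local $\dbar$-equation can actually be solved.
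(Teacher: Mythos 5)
Your proposal is correct and follows essentially the same route as the paper: an $L^2$-Dolbeault resolution of $K_X\otimes\mathscr{E}(h)$ by the sheaves of locally square-integrable $(n,q)$-forms, local exactness obtained from the singular Nakano positivity (the paper invokes the Stein-manifold version of Theorem \ref{thm:l2} on small polydiscs, which amounts to your direct use of Definition \ref{def:nakanosemipositive} with a suitable bounded strictly plurisubharmonic weight), and then compactness of $X$ plus the global estimate of Theorem \ref{thm:l2} to kill the cohomology of global sections.
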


Here, we can prove that the sheaf $\mathscr{E}(h)$ is coherent when $h$ is a Nakano (semi-)positive singular Hermitian metric (see Proposition \ref{prop:coherence}).
As an application of Theorem \ref{thm:demaillyskoda} and Theorem \ref{thm:vanishing}, we get the following result. 

\begin{theorem}\label{cor:grif}
Let $(X, \omega_X)$ be a projective manifold and a K\"ahler metric on $X$. We assume that $h$ is strictly Griffiths $\delta_{\omega_X}$-positive on $X$ (see Definition \ref{def:strictgrif}). 
Then the $q$-th cohomology group of $X$ with coefficients in the sheaf of germs of holomorphic sections of $K_X\otimes \mathscr{E}(h\otimes \det h)$ vanishes for $q>0 :$ 
$$
H^q(X, K_X\otimes \mathscr{E}(h\otimes \det h))=0.
$$ 
\end{theorem}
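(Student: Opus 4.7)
The plan is to derive this by combining Theorem \ref{thm:demaillyskoda} with Theorem \ref{thm:vanishing} through a local twisting argument. The expected design of the definitions is that strict Griffiths $\delta_{\omega_X}$-positivity of $h$ reduces locally to ordinary Griffiths semi-positivity after multiplying by a local potential for $\omega_X$; Theorem \ref{thm:demaillyskoda} then promotes this to Nakano semi-positivity of the determinant-twisted bundle, and unwinding the twist produces strict Nakano positivity, to which Theorem \ref{thm:vanishing} applies.

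Concretely, I would fix an arbitrary Stein coordinate $(\Omega, \iota)$ around a point of $X$ such that $\iota^\star E$ is trivial on $\Omega$. Since $\Omega$ is Stein and $\iota^\star \omega_X$ is a $d$-closed positive $(1,1)$-form, I can choose a smooth strictly plurisubharmonic function $\phi$ on $\Omega$ with $\ai\ddbar\phi = \iota^\star\omega_X$. The strict Griffiths $\delta_{\omega_X}$-positivity of $h$, unwound through Definition \ref{def:strictgrif}, should be equivalent to ordinary Griffiths semi-positivity of the twisted metric $(\iota^\star h)\, e^{-\delta\phi}$ on $\Omega$. Applying Theorem \ref{thm:demaillyskoda} to $(\iota^\star E, (\iota^\star h)\, e^{-\delta\phi})$ over $\Omega$ then yields that
$$
\bigl((\iota^\star h)\, e^{-\delta\phi}\bigr) \otimes \det\bigl((\iota^\star h)\, e^{-\delta\phi}\bigr) \;=\; \bigl(\iota^\star(h\otimes \det h)\bigr)\, e^{-\delta(r+1)\phi}
$$
is globally Nakano semi-positive on $\Omega$; here I use that $\det E$ has rank one and $E$ has rank $r$, so scaling $h$ by $e^{-\delta\phi}$ scales $\det h$ by $e^{-\delta r\phi}$.

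Next, I would feed this back into the $L^2$-inequality of Definition \ref{def:nakanosemipositive} with the psh weight $\psi + \delta(r+1)\phi$ substituted for $\psi$. The curvature term becomes $\ai\ddbar\psi + \delta(r+1)\,\iota^\star\omega_X$, so the resulting Bochner operator is $[(\ai\ddbar\psi + \delta(r+1)\,\iota^\star\omega_X)\otimes \mathrm{Id}, \Lambda_{\omega_\Omega}]$, which is precisely the operator appearing in the characterization of strict Nakano $(r+1)\delta_{\omega_X}$-positivity via Definition \ref{def:strictnakano}. Since the Stein chart was arbitrary, $h\otimes \det h$ is globally strictly Nakano $(r+1)\delta_{\omega_X}$-positive on $E\otimes \det E$, and Theorem \ref{thm:vanishing} applied to this pair delivers the desired vanishing $H^q(X, K_X\otimes \mathscr{E}(h\otimes \det h))=0$ for all $q>0$.

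The main obstacle, which is more bookkeeping than a deep difficulty, lies at the interface between the strict-positivity definitions and the local twisting procedure. I must verify that Definition \ref{def:strictgrif} really is characterized on Stein charts by Griffiths semi-positivity of $(\iota^\star h)\,e^{-\delta\phi}$ for a local potential $\phi$ of $\omega_X$, and dually that Definition \ref{def:strictnakano} is characterized by the $\delta$-shifted $L^2$-inequality extracted above. Both equivalences are what one expects from the smooth-metric picture and from the global shape of Theorem \ref{thm:l2}, but without them the chain \emph{strict Griffiths} $\Rightarrow$ \emph{Griffiths semi-positive} $\Rightarrow$ \emph{Nakano semi-positive} $\Rightarrow$ \emph{strict Nakano} breaks at its endpoints. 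Granted these two compatibility statements, the argument is simply a local application of Theorem \ref{thm:demaillyskoda} followed by a global application of Theorem \ref{thm:vanishing}.
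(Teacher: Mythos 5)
Your plan is the same as the paper's: the paper derives Theorem~\ref{cor:grif} by first proving (Theorem~\ref{thm:dsstrict}) that strict Griffiths $\delta_{\omega_X}$-positivity of $h$ implies strict Nakano $(r+1)\delta_{\omega_X}$-positivity of $h\otimes\det h$ --- via the singular Demailly--Skoda theorem applied to the locally potential-twisted metric --- and then applies Theorem~\ref{thm:vanishing}. That is exactly the chain you describe.

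However, your sketch carries a sign error that breaks the reduction at both ends. Definition~\ref{def:strictgrif} reads ``$he^{\delta\varphi}$ is Griffiths semi-positive'' with $\ai\ddbar\varphi=\omega_X$; multiplying $h$ by $e^{+\delta\varphi}$ subtracts $\delta\omega_X$ from the curvature, which is what encodes $\Theta_{(E,h)}\ge\delta\omega_X$. You twist by $e^{-\delta\phi}$, which adds $\delta\omega_X$; semi-positivity of $(\iota^\star h)e^{-\delta\phi}$ is strictly weaker than plain semi-positivity of $h$ and does not unwind the definition. The error propagates: your Demailly--Skoda step produces Nakano semi-positivity of $(h\otimes\det h)e^{-(r+1)\delta\phi}$, whereas Definition~\ref{def:strictnakano} requires Nakano semi-positivity of $(h\otimes\det h)e^{+(r+1)\delta\varphi}$. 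As written, the chain ``strict Griffiths $\Rightarrow$ strict Nakano'' therefore does not close. Flipping the sign of every exponent turns your argument into the paper's proof of Theorem~\ref{thm:dsstrict}.

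One further remark: the ``compatibility statements'' you flag as a possible obstacle are not needed. Definitions~\ref{def:strictgrif} and~\ref{def:strictnakano} are already phrased as ``$he^{\delta\varphi}$ is semi-positive on $U$'' for every local K\"ahler potential $\varphi$ on every open $U$, so once Demailly--Skoda yields Nakano semi-positivity of $(h\otimes\det h)e^{(r+1)\delta\varphi}$ on each such $U$, the conclusion that $h\otimes\det h$ is strictly Nakano $(r+1)\delta_{\omega_X}$-positive is immediate from the definition. The step where you ``feed this back into the $L^2$-inequality with weight $\psi+\delta(r+1)\phi$'' is superfluous and carries the same sign slip; it obscures rather than clarifies the argument.
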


Theorem \ref{cor:grif} can be regarded as a generalization of the Griffiths vanishing theorem (cf. \cite[Chapter VII, Corollary 9.4]{DemCom}).
If the Lelong number $\nu (\det h, x)<1$ for all points $x\in X$, this kind of result was obtained in \cite[Corollary 1.4]{Ina18}. 
We stress that, although Definition \ref{def:nakanosemipositive} or \ref{def:localnakano} is one choice of the definition of singular Nakano semi-positivity, Theorem \ref{cor:grif} is independent of these choices. 
Our formulation fits with the classical framework in this sense. 
Indeed, we can prove that the global Nakano semi-positivity, the local Nakano semi-positivity and Griffiths semi-positivity  for singular Hermitian metrics are all identical when $\dim X=1$ or rank~$E=1$ (see Section \ref{sec:property}).
Using Theorem \ref{thm:vanishing} and \ref{cor:grif}, we can determine the non-existence of Nakano or Griffiths positive singular Hermitian metrics on certain vector bundles (see Example \ref{examp:rei}).

As applications, we have the following results. 
First, we show the singular Nakano semi-positivity of the following direct image bundle. 
\begin{theorem}\label{mainthm:singber}
		Let $U\subset \mathbb{C}^n_{\{ t\}}$ and $\Omega \subset \mathbb{C}^m_{\{ z\}}$ be bounded domains, and $\varphi$ be a locally bounded plurisubharmonic function on $\overline{U\times \Omega}$.
	We also let $\Omega$ be pseudoconvex. 
	Set, for each $t\in U$, $A^2_t:= \{ f\in \mathscr{O}(\Omega) \mid \| f\|_t^2:=\int_\Omega |f|^2e^{-\varphi(t, \cdot)} <+\infty \}$ and $A^2:=\coprod_{t\in U}A^2_t $.  
	Then the trivial vector bundle $(A^2, \| \cdot \|)$ is globally Nakano semi-positive in the sense of Definition \ref{def:nakanosemipositive}. 
\end{theorem}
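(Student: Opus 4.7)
The plan is to verify the optimal $L^2$-estimate of Definition~\ref{def:nakanosemipositive} by smoothing $\varphi$ via convolution, applying the classical Berndtsson theorem and the Demailly--Hörmander $L^2$-estimate to the smooth approximants, and passing to a weak $L^2$ limit. Throughout, an $A^2$-valued $(n,q)$-form $f = \sum f_{I,J}(t,z)\, dt_I \wedge d\bar t_J$ on a Stein coordinate $\Omega' \subset U$ is identified with an $(n,q)$-form $\widetilde f$ on $\Omega' \times \Omega$ having no $dz$ or $d\bar z$ components and coefficients holomorphic in $z$. Under this identification, $\dbar$-closedness of $f$ in $t$ is equivalent to $\dbar_{(t,z)} \widetilde f = 0$, and the $L^2$ norm of $f$ with respect to $(\omega_{\Omega'}, h)\, e^{-\psi}$ equals $\int_{\Omega' \times \Omega} |\widetilde f|^2 e^{-\varphi - \psi}\, dV$ for the product Kähler form on $\Omega' \times \Omega$.

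For the smooth step, I regularize $\varphi$ by convolution on a slight inner shrinking of $U \times \Omega$ to obtain smooth plurisubharmonic functions $\varphi_\epsilon \downarrow \varphi$, and let $h_\epsilon$ be the smooth Hermitian metric on the trivial bundle $A^2$ given by $|s|^2_{h_\epsilon}(t) = \int_\Omega |s|^2 e^{-\varphi_\epsilon(t,\cdot)}\, dV$. By Berndtsson's theorem for smooth plurisubharmonic weights, $(A^2, h_\epsilon)$ is Nakano semi-positive, so the classical Demailly--Hörmander $L^2$-estimate on the Stein domain $\Omega'$ produces $u_\epsilon$ with $\dbar u_\epsilon = f$ and
$$
\int_{\Omega'} |u_\epsilon|^2_{(\omega_{\Omega'}, h_\epsilon)} e^{-\psi}\, dV_{\omega_{\Omega'}} \leq \int_{\Omega'} \langle B^{-1}_{\omega_{\Omega'}, \psi} f, f\rangle_{(\omega_{\Omega'}, h_\epsilon)} e^{-\psi}\, dV_{\omega_{\Omega'}}.
$$
Since $\varphi_\epsilon \geq \varphi$ implies $h_\epsilon \leq h$ as Hermitian forms and $B^{-1}_{\omega_{\Omega'}, \psi}$ is a non-negative operator on the form factor, the right-hand side is uniformly dominated in $\epsilon$ by the finite quantity $\int_{\Omega'} \langle B^{-1}_{\omega_{\Omega'}, \psi} f, f\rangle_{(\omega_{\Omega'}, h)} e^{-\psi}\, dV_{\omega_{\Omega'}}$.

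Lifting each $u_\epsilon$ to $\widetilde u_\epsilon$ on $\Omega' \times \Omega$, monotonicity gives $\int |\widetilde u_\epsilon|^2 e^{-\varphi_{\epsilon_0} - \psi}\, dV \leq C$ for all $\epsilon \leq \epsilon_0$ and any fixed $\epsilon_0 > 0$. Weak $L^2$ compactness with this bounded continuous weight yields a weakly convergent subsequence $\widetilde u_\epsilon \rightharpoonup \widetilde u$; the limit remains fiberwise holomorphic in $z$, so $\widetilde u$ corresponds to an $A^2$-valued form $u$ on $\Omega'$ with $\dbar u = f$. Fatou's lemma combined with monotone convergence as $\epsilon_0 \to 0$ upgrades the estimate to
$$
\int_{\Omega'} |u|^2_{(\omega_{\Omega'}, h)} e^{-\psi}\, dV_{\omega_{\Omega'}} \leq \int_{\Omega'} \langle B^{-1}_{\omega_{\Omega'}, \psi} f, f\rangle_{(\omega_{\Omega'}, h)} e^{-\psi}\, dV_{\omega_{\Omega'}}.
$$
The Griffiths semi-positivity of $(A^2, \|\cdot\|)$ demanded by the definition arises as a byproduct of the same limit procedure applied to the smooth Nakano (hence Griffiths) positivity of $(A^2, h_\epsilon)$.

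The principal obstacle is the infinite rank of $A^2$: the Demailly--Hörmander estimate in the smooth case must be invoked in a version valid for a Bergman-type (Hilbert) bundle, which is standard but calls either for a direct $L^2$ argument on the product $\Omega' \times \Omega$ or for approximation by finite-dimensional subbundles. A secondary technical issue is the careful interchange of limits in the weighted norms under $\varphi_\epsilon \downarrow \varphi$, including verifying that the weak $L^2$ limit of the fiberwise holomorphic $\widetilde u_\epsilon$ remains fiberwise holomorphic as the weights vary.
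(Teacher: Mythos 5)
Your proposal is correct and follows essentially the same route as the paper: regularize $\varphi$ by a decreasing sequence of smooth plurisubharmonic functions, invoke Berndtsson's theorem to get smooth Nakano semi-positive metrics increasing to $\|\cdot\|$, and pass to a weak $L^2$ limit of the solutions with the monotone convergence theorem. The paper merely packages your limiting argument as a separate statement (Proposition \ref{prop:increasing}) and disposes of the infinite-rank issue you flag by remarking that Definition \ref{def:nakanosemipositive} and Proposition \ref{prop:nakano} extend naturally to Hilbert bundles, citing \cite{DNWZ20}.
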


This theorem is well-known in the situation that $\varphi$ is smooth, which was obtained by Berndtsson \cite{Ber09}. 
A key ingredient to prove the theorem is that singular Nakano semi-positivity is preserved with respect to a increasing sequence (Proposition \ref{prop:increasing}).

Next, 
we show the following theorem. 

\begin{theorem}\label{thm:bigvanishing}
	Let $(X, \omega_X)$ be a projective manifold and a K\"ahler metric on $X$.
	We assume that $E\to X$ is a V-big vector bundle (see Definition \ref{def:lbigvbig}). 
	Then for any $m\in \mathbb{N}$, there exists a positive constant $\delta$ such that $S^mE\otimes \det E$ admits a globally strictly Nakano $\delta_{\omega_X}$-positive singular Hermitian metric $h_m$. 
	Here $S^m E$ is the $m$-th symmetric power of $E$. 
	Then we also obtain the following vanishing theorem 
	$$
	H^q(X, K_X\otimes S^m\mathscr{E}\otimes \det \mathscr{E}(h_m))=0
	$$
	for $m, q>0$, where $S^m\mathscr{E}\otimes \det \mathscr{E}(h_m)$ is the sheaf of germs of locally square integrable holomorphic sections of $S^mE\otimes \det E$ with respect to $h_m$. 
\end{theorem}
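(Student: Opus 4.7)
The plan is to reduce Theorem \ref{thm:bigvanishing} to Theorem \ref{thm:vanishing} by constructing, for each $m \geq 1$, a globally strictly Nakano $\delta_{\omega_X}$-positive singular Hermitian metric $h_m$ on $S^m E \otimes \det E$. Once such an $h_m$ is in hand, the vanishing of $H^q(X, K_X \otimes S^m\mathscr{E} \otimes \det\mathscr{E}(h_m))$ for $q > 0$ is an immediate application of Theorem \ref{thm:vanishing}.

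The construction of $h_m$ proceeds in two stages. In the first stage I would use the V-bigness hypothesis (Definition \ref{def:lbigvbig}) to produce a strictly Griffiths $\delta_0$-positive singular Hermitian metric $h$ on $E$, with $\delta_0 > 0$ depending only on $E$ and $\omega_X$. This is the vector-bundle analogue of Kodaira's lemma: V-bigness yields, for some large $k_0$ and some ample line bundle $A$ on $X$, a non-zero morphism $A \hookrightarrow S^{k_0} E$ (equivalently, a non-zero section of $S^{k_0} E \otimes A^{-1}$). Pairing a smooth positively curved Hermitian metric on $A$ with a smooth background metric on $E$ through this morphism, by a standard construction, produces a singular Hermitian metric $h$ on $E$ whose Griffiths curvature dominates $\delta_0 \omega_X \otimes \operatorname{Id}_E$. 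An analogous construction in the related L-big setting has been carried out in \cite{Ina18}.

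In the second stage I would upgrade Theorem \ref{thm:demaillyskoda} to its symmetric-power analogue: if $h$ is strictly Griffiths $\delta_0$-positive on $E$, then $(S^m h) \otimes \det h$ is globally strictly Nakano $\delta$-positive on $S^m E \otimes \det E$ for some $\delta = \delta(m, r, \delta_0) > 0$. The smooth version of this statement is the classical extension of Demailly-Skoda to symmetric powers (cf.\ \cite[Chapter VII]{DemCom}). To adapt it to the singular framework, I would mirror the proof of Theorem \ref{thm:demaillyskoda}: approximate $h$ by smooth Griffiths positive metrics $h_\epsilon$ whose curvatures uniformly dominate $\delta_0 \omega_X \otimes \operatorname{Id}_E$, apply the smooth symmetric Demailly-Skoda estimate to each $(S^m h_\epsilon) \otimes \det h_\epsilon$, and pass to the limit via the weak $L^2$-compactness of solutions to the $\dbar$-equation to derive the $L^2$-estimate of Definition \ref{def:strictnakano} for the metric $h_m := (S^m h) \otimes \det h$.

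The principal obstacle is the first stage: converting the asymptotic cohomological growth of V-bigness into a quantitative strict Griffiths positivity constant $\delta_0$ for a singular metric on $E$ itself. In rank one this is Kodaira's lemma, but the higher-rank case requires a careful treatment of the singular metric built from the auxiliary section of $S^{k_0} E \otimes A^{-1}$, and in particular control of its curvature in the sense of currents. A secondary technical point is that the smooth regularization $h_\epsilon$ used in the limiting argument must preserve the uniform Griffiths lower bound $\delta_0 \omega_X$, which may require an equisingular approximation in the spirit of Demailly's regularization techniques.
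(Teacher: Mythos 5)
There is a genuine gap in your first stage, and it is the one you yourself flag as the ``principal obstacle.'' From V-bigness you obtain a non-zero section $s \in H^0(X, S^{k_0}E \otimes A^{-1})$ for some $k_0 \gg 0$ and ample $A$. Combined with a positively curved metric on $A$, this yields a singular metric of positive curvature on $\mathscr{O}_{\mathbb{P}(E)}(1)$ (a Finsler-type positivity datum); but there is no ``standard construction'' converting such a datum into a \emph{Hermitian} metric on $E$ itself with Griffiths curvature bounded below by $\delta_0\,\omega_X \otimes \mathrm{Id}_E$. Passing from a positively curved metric on $\mathscr{O}_{\mathbb{P}(E)}(1)$ to a Griffiths-positive Hermitian metric on $E$ is essentially the content of (a singular variant of) the Griffiths conjecture and cannot be taken for granted. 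Likewise, a metric or section on $S^{k_0}E$ has no ``$k_0$-th root'' that would descend it to $E$, so the morphism $A \hookrightarrow S^{k_0}E$ does not directly equip $E$ with a positive singular metric.

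The paper's argument is structurally different and avoids this issue entirely; it never asserts positivity of a metric on $E$. By Proposition \ref{prop:baseloci} and V-bigness it produces a singular metric $\widehat{h}$ on $\mathscr{O}_{\mathbb{P}(E)}(1)$ with $\ai\Theta_{(\mathscr{O}_{\mathbb{P}(E)}(1),\widehat{h})} \geq \varepsilon\,\omega_{\mathbb{P}(E)}$, smooth off a proper analytic set $\widehat{S}$ whose image $\pi(\widehat{S})$ is a proper subset of $X$. The metric $h_m$ is then the fiber-integration ($L^2$/Bergman) metric on the direct image $\pi_\star(K_{\mathbb{P}(E)/X}\otimes\mathscr{O}_{\mathbb{P}(E)}(r+m)) \cong S^m E\otimes\det E$: it is globally Griffiths semi-positive by Berndtsson--P\u{a}un/P\u{a}un--Takayama, and on $X\setminus S$ (with $S\supset\pi(\widehat{S})$ analytic and $X\setminus S$ Stein) it is smooth and Nakano positive with a uniform $\delta$ by Berndtsson's curvature theorem for direct images. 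Proposition \ref{prop:kakucho} then extends the global strict Nakano $\delta_{\omega_X}$-positivity across $S$. So the metric is built directly on $S^m E\otimes\det E$, and there is no symmetric-power Demailly--Skoda step at all. Your second stage --- a singular symmetric-power analogue of Theorem \ref{thm:demaillyskoda} --- is a plausible generalization and not where the difficulty lies; but since the first stage cannot be carried out as stated, the proposal does not reach it.
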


This is one application of our vanishing theorem.
This result was published by Iwai as \cite[Corollary 5.9]{Iwa18} (it was communicated to Iwai by the author). 

The organization of this paper is as follows. 
We start with Section \ref{sec:prelimi} a general discussion of smooth and singular Hermitian metrics on holomorphic vector bundles. 
Here we introduce several H\"ormander type conditions. 
In Section \ref{sec:dsk}, we explain the result of Demailly and Skoda. 
Here we also generalize the result in the singular setting. 
In Section \ref{sec:vanishing}, we establish $L^2$-estimates and vanishing theorems for holomorphic vector bundles with Nakano positive singular Hermitian metrics. 
In Section \ref{sec:property}, we verify that our definition of Nakano semi-positivity is an appropriate positivity notion when we compare it with the definition of Griffiths semi-positivity. 
In Section \ref{sec:applications}, we show applications of our main theorems and prove Theorem \ref{mainthm:singber} and \ref{thm:bigvanishing}. 
Finally, in Section \ref{sec:problems}, we propose some questions which might be worth thinking about. 

\vskip10mm
{\bf Acknowledgment. }
The author would like to thank his supervisor Prof. Shigeharu Takayama for enormous supports. 
He is also grateful to Dr. Genki Hosono for helpful comments. 
Last but not least, he is grateful to the anonymous referees for their  constructive criticism and valuable suggestions.
This work is supported by the Program for Leading Graduate Schools, MEXT, Japan. 
This work is also supported by JSPS KAKENHI Grant Number 18J22119. 

\section{Notation and Preliminaries}\label{sec:prelimi}
Throughout this paper, we use the following notation and definitions. 

\begin{notation}
\begin{itemize}
\item $K_X$ : the canonical line bundle of $X$.
\item $dV_\omega:=\frac{\omega^n}{n!}$ : the volume form determined by $\omega$. 
\item $E^\star$ : the dual bundle of $E$.
\item $h^\star$ : the dual metric of $h$ on $E^\star$.
\item $\mathscr{O}(E)$ : the sheaf of germs of local holomorphic sections of $E$.
\item $C^k_{(p, q)}(X, E):=C^k(X, \wedge^{(p, q)}T^\star_X\otimes E)$ for $0\leq k\leq +\infty$. 
\item $\mathscr{D}_{(p,q)}(X, E)$ : the space of smooth sections of $\wedge^{(p, q)}T^\star_X\otimes E$ with compact support. 
\item $L^p_{(p, q)}(X, E ; \omega, h)$ : the space of $L^p$ sections of $\wedge^{(p, q)}T^\star_X\otimes E$ with respect to $\omega$ and $h$. 
\item $\llangle \alpha, \beta \rrangle_{(\omega, h)} := \int_X \langle\alpha, \beta\rangle_{(\omega, h)}dV_\omega$.
\item $\| \alpha\|^2_{(\omega, h)}:= \llangle \alpha, \alpha \rrangle_{(\omega, h)}$.
\item $D'^\star_\psi$ : the adjoint operator of $D'_{\psi}$ with respect to $\llangle \cdot,\cdot  \rrangle_{(\omega, he^{-\psi})}$.
\item $\dbar^\star_\psi$ : the adjoint operator of $\dbar$ with respect to $\llangle \cdot,\cdot  \rrangle_{(\omega, he^{-\psi})}$.
\item $\Delta'_\psi:=D'_\psi D'^\star_\psi+D'^\star_\psi D'_\psi, \Delta_\psi''=\dbar \dbar^\star_\psi + \dbar^\star_\psi \dbar$ with respect to $\llangle \cdot,\cdot  \rrangle_{(\omega, he^{-\psi})}$. 
\item $L_\omega : C^\infty_{(p, q)}(X, E)\to C^\infty_{(p+1, q+1)}(X, E)$ : the operator defined by $\omega \wedge \cdot $.
\item $\Lambda_\omega$ : the adjoint operator of $L_\omega$. 
\item $[\cdot, \cdot]$ : the graded Lie bracket. 
\item $\Delta^n(p ; r):= \{ (z_1, \cdots z_n)\in \mathbb{C}^n \mid |z_i - p_i|< r \}$ for $p= (p_1, \cdots , p_n)\in \mathbb{C}^n$. 
\item $\Delta^n_r:= \Delta^n(0; r)$. 
\end{itemize}
\end{notation}

\begin{definition}\label{def:steincoordinate}
Let $\Omega$ be an $n$-dimensional Stein manifold and $\iota : \Omega \to X$ be a holomorphic map from $\Omega$ to $X$. 
We say that $(\Omega, \iota)$ is a {\it Stein coordinate} around $x_0\in X$ if and only if the following conditions are satisfied: 
\begin{enumerate}
\item $\iota:\Omega \to X$ is an injective holomorphic map, i.e. $\Omega \to \iota(\Omega)$ defines a biholomorphic map.
\item $\iota(\Omega)$ is an open subset of $X$ such that $x_0 \in \iota(\Omega)$.
\end{enumerate}
\end{definition}

By definition, every complex manifold admits a Stein coordinate around any point. 

\subsection{Smooth Hermitian metrics}
We explain some definitions and properties of smooth Hermitian metrics. 
In this subsection, we always assume that a Hermitian metric $h$ is smooth. 

Let $\Theta_{(E, h)}$ be the Chern curvature tensor of $(E, h)$. 
Taking a local coordinate $(z_1, \cdots , z_n)$ of $X$ and an orthonormal frame $(e_1, \cdots, e_r)$ of $E$, 
we can write 
$$
\sqrt{-1}\Theta_{(E, h)}= \sum_{1\leq j, k \leq n, 1\leq \lambda, \mu\leq r}c_{j\bar{k}\lambda\bar{\mu}}dz_j\wedge d\bar{z}_k \otimes e^\star_\lambda \otimes e_\mu.
$$
We identify the curvature tensor with a Hermitian form 
$$
\widetilde{\Theta}_{(E, h)}(\tau, \tau)= \sum_{1\leq j, k \leq n, 1\leq \lambda, \mu\leq r}c_{j\bar{k}\lambda\bar{\mu}}\tau_{j\lambda}\bar{\tau}_{k\mu}
$$
for $\tau = \sum_{j, \lambda}\tau_{j\lambda}\frac{\partial}{\partial z_i}\otimes e_\lambda\in T_X\otimes E$ on $T_X\otimes E$ . 
By using this Hermitian form, we define the following positivity notions. 

\begin{definition}
The Hermitian vector bundle $(E, h)$ is said to be : 
\begin{enumerate}
\item {\it Griffiths positive} (resp. {\it Griffiths negative}) if we have $\widetilde{\Theta}_{(E, h)}(\xi\otimes s, \xi\otimes s)>0$ (resp. $\widetilde{\Theta}_{(E, h)}(\xi\otimes s, \xi\otimes s)<0$)
for all non-zero elements $\xi\in T_X, s\in E$. We denote it by $\Theta_{(E, h)}>_{{\rm Grif.}}0$ (resp. $\Theta_{(E, h)}<_{{\rm Grif.}}0$).
\item {\it Nakano positive} (resp. {\it Nakano negative}) if we have $\widetilde{\Theta}_{(E, h)}(\tau, \tau)>0$ (resp. $\widetilde{\Theta}_{(E, h)}(\tau, \tau)<0$) for all non-zero elements $\tau \in T_X\otimes E$. 
We denote it by $\Theta_{(E, h)}>_{{\rm Nak.}}0$ (resp. $\Theta_{(E, h)}<_{{\rm Nak.}}0$).
\end{enumerate}
Corresponding semi-positivity and semi-negativity are defined by relaxing the strict inequalities. 
\end{definition}

We can associate the Nakano positivity of $(E,h)$ with the positivity of the operator $[\ai \Theta_{(E, h)}, \Lambda_\omega]$ from the following lemma. 
\begin{lemma}$($cf. \cite[Chapter VII, Lemma 7.2]{DemCom}, \cite[Lemma 2.5]{DNWZ20}$)$\label{lem:operator}
Let $(X, \omega)$ be a K\"ahler manifold. 
We have that $(E, h)>_{{\rm Nak.}}0$ $($resp. $(E, h)\geq_{{\rm Nak.}}0$$)$ if and only if the Hermitian operator $[\ai \Theta_{(E, h)}, \Lambda_\omega]$ is positive 
definite $($resp. semi-positive definite$)$ on $\wedge^{(n,1)}T^\star_X\otimes E$. 
\end{lemma}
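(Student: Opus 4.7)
The plan is to reduce the lemma to a pointwise algebraic identity relating the Hermitian form $u \mapsto \langle [\ai \Theta_{(E,h)}, \Lambda_\omega]\, u, u \rangle_{(\omega,h)}$ on $\wedge^{n,1}T^\star_X \otimes E$ to the curvature form $\widetilde{\Theta}_{(E,h)}$ on $T_X \otimes E$. Both sides depend only on the pointwise values of $\omega$, $h$ and $\Theta_{(E,h)}$, so the equivalence can be verified fiberwise at each $x_0 \in X$.

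Fix $x_0 \in X$ and choose local holomorphic coordinates $(z_1, \ldots, z_n)$ centered at $x_0$ with $\omega_{x_0} = \ai \sum_j dz_j \wedge d\bar z_j$, together with a local frame $(e_\lambda)_{1\leq\lambda\leq r}$ of $E$ that is orthonormal at $x_0$. Writing the curvature as $\ai \Theta_{(E,h)} = \sum c_{j\bar k \lambda \bar \mu}\, dz_j \wedge d\bar z_k \otimes e_\lambda^\star \otimes e_\mu$ at $x_0$ and setting $\epsilon := dz_1 \wedge \cdots \wedge dz_n$, every element $u \in \wedge^{n,1}T^\star_{X,x_0} \otimes E_{x_0}$ can be expressed as $u = \sum_{j,\lambda} u_{j\lambda}\, \epsilon \wedge d\bar z_j \otimes e_\lambda$. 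I associate to such $u$ the tensor $\tau_u := \sum_{j,\lambda} u_{j\lambda}\, \tfrac{\partial}{\partial z_j} \otimes e_\lambda \in T_{X,x_0}\otimes E_{x_0}$; the assignment $u \leftrightarrow \tau_u$ is a $\mathbb{C}$-linear bijection between $\wedge^{n,1}T^\star_{X,x_0} \otimes E_{x_0}$ and $T_{X,x_0} \otimes E_{x_0}$.

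The heart of the proof is then the pointwise identity
$$\langle [\ai \Theta_{(E,h)}, \Lambda_\omega]\, u, u \rangle_{(\omega,h),\, x_0} \;=\; \widetilde{\Theta}_{(E,h)}(\tau_u, \tau_u) \;=\; \sum_{j,k,\lambda,\mu} c_{j\bar k \lambda \bar \mu}\, u_{j\lambda}\,\overline{u_{k\mu}},$$
which one obtains by a direct calculation: expand the commutator using the action of $\Lambda_\omega$ on $\epsilon \wedge d\bar z_j$ and on the $(n,2)$-forms $dz_\ell \wedge d\bar z_k \wedge \epsilon \wedge d\bar z_j$, and observe that most contraction terms vanish by degree reasons on $(n,1)$-forms. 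Granting this identity, $[\ai \Theta_{(E,h)}, \Lambda_\omega]$ is positive (resp.\ semi-positive) definite on $\wedge^{n,1}T^\star_X \otimes E$ if and only if the Hermitian matrix $(c_{j\bar k \lambda \bar \mu})$ is positive (resp.\ semi-positive) definite on $T_X \otimes E$, i.e.\ if and only if $(E,h)$ is Nakano positive (resp.\ Nakano semi-positive), which is what we wanted.

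The main technical obstacle is the sign and index bookkeeping inside the commutator computation. The K\"ahler condition on $\omega$ enters precisely at this point: it permits the choice of coordinates in which $\omega$ is standard at $x_0$, so that no first-order correction terms intrude and the identity above takes its clean form. The computation itself is classical and is the content of the Akizuki--Nakano calculation, recorded for instance as \cite[Chapter VII, Lemma 7.2]{DemCom}; in a fully written-out proof I would simply invoke that reference for the formula and emphasize only the translation into the Hermitian-form framework used in the remainder of the paper.
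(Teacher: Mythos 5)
Your proof is correct and follows the standard route. The paper does not give its own argument for this lemma — it cites \cite[Chapter VII, Lemma 7.2]{DemCom} and \cite[Lemma 2.5]{DNWZ20} — and your reduction to the pointwise identity
$\langle [\ai\Theta_{(E,h)},\Lambda_\omega]u,u\rangle_{(\omega,h)}=\sum_{j,k,\lambda,\mu}c_{j\bar k\lambda\bar\mu}u_{j\lambda}\overline{u_{k\mu}}$
via the bijection $u\leftrightarrow\tau_u$ is exactly the content of those references, so you are proposing the same argument. One small imprecision worth flagging: the K\"ahler hypothesis is not what permits normalizing $\omega$ at $x_0$ — any Hermitian metric can be taken standard at a single point — and since the desired identity is purely fiberwise (it depends only on $\omega_{x_0}$, $h_{x_0}$, $\Theta_{x_0}$), the K\"ahler condition plays no role in this particular lemma. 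It is retained in the statement because the surrounding theory (the Bochner–Kodaira–Nakano identity, Theorem \ref{thm:demailly}) genuinely requires it, but attributing its necessity to the pointwise normalization is not accurate; this does not affect the validity of your argument.
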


We can define Griffiths positivity and negativity without using the curvature tensor. 
We have the following result. 
\begin{proposition}$($cf. \cite[Section 2]{Rau15}$)$\label{prop:grif}
The following properties are equivalent{\rm :} 
\begin{enumerate}
\item h is Griffiths semi-negative.
\item $|u|^2_h$ is plurisubharmonic for any local holomorphic section $u$ of $E$.
\item $\log |u|^2_h$ is plurisubharmonic for any local holomorphic section $u$ of $E$. 
\item the dual metric $h^\star$ on $E^\star$ is Griffiths semi-positive. 
\end{enumerate}
\end{proposition}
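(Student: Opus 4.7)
The plan is to take (3) as the central condition and prove (3) $\Rightarrow$ (2), (2) $\Rightarrow$ (3), (1) $\Leftrightarrow$ (3), and (1) $\Leftrightarrow$ (4). The implication (3) $\Rightarrow$ (2) is immediate: since $t \mapsto e^t$ is convex and increasing, $|u|^2_h = \exp(\log|u|^2_h)$ is plurisubharmonic whenever $\log|u|^2_h$ is. For (1) $\Leftrightarrow$ (3), in the smooth case one invokes the standard pointwise identity $\ai \ddbar |u|^2_h = \ai\langle D'u, D'u\rangle_h - \ai\langle \Theta_{(E,h)} u, u\rangle_h$ for holomorphic $u$ (using $\dbar D'u = \Theta_{(E,h)}u$): the first term is a non-negative $(1,1)$-form in every direction, so $|u|^2_h$ is psh for every local holomorphic $u$ precisely when $h$ is Griffiths semi-negative. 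For general singular $h$ the cleanest route, following \cite{Rau15}, is to take (3) as the working definition of Griffiths semi-negativity, so (1) $\Leftrightarrow$ (3) becomes a matter of convention.

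The main obstacle is (2) $\Rightarrow$ (3). Given (2), for any local holomorphic function $f$ the section $fu$ is again holomorphic, so by (2) the function $|f|^2 |u|^2_h$ is psh. This reduces matters to the following general lemma: \emph{if $g \geq 0$ is upper semicontinuous on a domain $\Omega$ and $|f|^2 g$ is psh for every local holomorphic $f$, then $\log g$ is psh on $\Omega$}. To prove the lemma, I would verify the sub-mean-value property of $\log g$ on each holomorphic disc $\varphi : \overline{\mathbb{D}} \to \Omega$. Given any harmonic function $v$ on $\overline{\mathbb{D}}$ with $\log g \leq v$ on $\partial \mathbb{D}$, write $v = \mathrm{Re}(F)$ for some holomorphic $F$; then $|e^{-F/2}|^2 g = e^{-v} g$ is psh on $\mathbb{D}$ and bounded by $1$ on $\partial \mathbb{D}$, so the maximum principle yields $e^{-v}g \leq 1$ throughout $\mathbb{D}$, i.e.\ $\log g \leq v$ on $\mathbb{D}$, as required.

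Finally, (1) $\Leftrightarrow$ (4) is the duality statement: Griffiths semi-positivity of $h^\star$ on $E^\star$ is by definition equivalent to Griffiths semi-negativity of the double dual $(h^\star)^\star = h$. In the smooth setting this matches the identity $\Theta_{(E^\star, h^\star)} = -{}^t \Theta_{(E, h)}$; in the singular setting the only subtlety is that $h^\star$ may blow up on the degeneracy locus of $h$, but since the property is local and involves only holomorphic sections of $E^\star$, it is verified on the open set where $h$ is finite and nondegenerate and then extends by upper semicontinuity of the relevant local weights.
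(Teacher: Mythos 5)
The paper does not prove this proposition; it states it with a citation to Raufi (Section 2), where it appears for \emph{smooth} Hermitian metrics (and indeed the subsection in the present paper explicitly assumes $h$ smooth), so there is no in-paper argument to compare against. Your proof is correct and is essentially the standard one found in Raufi: the convexity observation for $(3)\Rightarrow(2)$, the ``$|f|^2 g$ psh for all holomorphic $f$ implies $\log g$ psh'' lemma via the maximum principle for $(2)\Rightarrow(3)$, the Bochner-type identity $\ai\ddbar|u|^2_h=\ai\langle D'u,D'u\rangle_h-\ai\langle\Theta_{(E,h)}u,u\rangle_h$ together with jet-prescription for the equivalence with (1), and $\Theta_{(E^\star,h^\star)}=-{}^t\Theta_{(E,h)}$ for (4). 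Two small framing remarks: the curvature identity as you state it directly gives $(1)\Leftrightarrow(2)$ rather than $(1)\Leftrightarrow(3)$ (which is harmless since you also prove $(2)\Leftrightarrow(3)$); and the aside about singular $h$ and ``taking (3) as the working definition'' is unnecessary here, because Proposition \ref{prop:grif} lives in the smooth subsection --- the passage to the singular case is what Definition \ref{GrifPos} does afterwards, using this very proposition as motivation.
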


We can treat the above conditions $(2)$ and $(3)$ without using the curvature tensor. 
Hence, we use these conditions to define Griffiths semi-positivity and semi-negativity of singular Hermitian metrics (see Definition \ref{GrifPos}). 
On the other hand, we did not know such a characterization of Nakano positivity. 

Recently, new positivity notions defined via the H\"ormander $L^p$-estimate were widely investigated. 
These studies can be regarded as a converse of H\"ormander's estimate which is essentially due to Andreotti and Vesentini \cite{AV65}, and H\"ormander \cite{Hor65} (see also Theorem \ref{thm:demailly}).
Initially, Berndtsson established a converse of H\"ormander's $L^2$-estimate for a continuous function on a $1$-dimensional domain, and use this result to prove the complex Pr\'ekopa theorem in \cite{Ber98}. 
In \cite{HI19}, we introduced the twisted H\"ormander condition for holomorphic vector bundles on an $n$-dimensional domain. 

\begin{definition}$($\cite[Definition 3.3]{HI19}$)$\label{def:twistedhorm}
Let $h$ be a singular Hermitian metric on $E\to \Omega$ over a domain $\Omega \subset \mathbb{C}^n$. 
We say that $(E, h)$ satisfies {\it the twisted H\"ormander condition} if for any positive integer $m$, for any smooth strictly plurisubharmonic function $\psi$ on $\Omega$, 
and for any $\dbar$-closed $f=\sum_j f_j dz_1 \wedge \cdots \wedge dz_n\wedge d\bar{z}_j\in \mathscr{D}_{(n, 1)}(\Omega, E^{\otimes m})$, there exists $u\in C^\infty_{(n, 0)}(\Omega, E^{\otimes m})$ satisfying $\dbar u=f$ and 
$$
\int_\Omega |u|^2_{(\omega_\Omega, h^{\otimes m})}e^{-\psi}dV_{\omega_\Omega}\leq \int_\Omega \sum_{1\leq i, j\leq n}\langle \psi^{i\bar{j}}f_i, f_j  \rangle_{(\omega_\Omega, h^{\otimes m})}e^{-\psi}dV_{\omega_\Omega}, 
$$
where we assume that the right-hand side is finite. 
Here $(\psi^{i\bar{j}})_{1\leq i, j \leq n}$ denotes the inverse matrix of $(\frac{\partial^2}{\partial z_i \partial \bar{z}_j})_{1\leq i, j \leq n}$. 
\end{definition}

We remark that the matrix $(\psi^{i\bar{j}})_{1\leq i, j \leq n}$ corresponds to the inverse operator of $B_{\omega_\Omega, \psi}=[\ai \ddbar \psi \otimes Id_{E^{\otimes m}}, \Lambda_{\omega_\Omega}]$. 
We proved that this twisted H\"ormander condition implies Griffiths semi-positivity under some regularity assumptions (\cite[Theorem 3.5]{HI19}, see also \cite[Theorem 1.2]{DNWZ20}). 

Then Deng, Ning, Wang, and Zhou introduced and improved various H\"ormander type positivity notions for holomorphic vector bundles, which were called the multiple coarse $L^p$-estimate condition and the optimal $L^p$-estimate condition in \cite{DNWZ20}.
We mention that the twisted H\"ormander condition above is something like a multiple optimal $L^2$-estimate type condition. 
In this paper, we focus on the optimal $L^2$-estimate condition.

\begin{definition}$($\cite[Definition 1.1]{DNWZ20}$)$\label{def:optimaltheta}
Assume that a K\"ahler manifold $(X, \omega)$ admits a positive holomorphic line bundle, $(E, h)$ is a (singular) Hermitian vector bundle (maybe of infinite rank) over X. 
Then we say that $(E, h)$ satisfies {\it the optimal $L^2$-estimate condition} 
if for any positive holomorphic line bundle $(A, h_A)$ on $X$, for any $\dbar$-closed $f\in \mathscr{D}_{(n, 1)}(X, E\otimes A)$, 
there exists $u\in L^2_{(n, 0)}(X, E\otimes A)$ satisfying $\dbar u =f$ and 
$$
\int_X |u|^2_{(\omega, h\otimes h_A)}dV_\omega \leq \int_X \langle B_{h_A}^{-1}f, f\rangle_{(\omega, h\otimes h_A)}dV_\omega, 
$$
where $B_{h_A}=[\sqrt{-1}\Theta_{(A, h_A)}\otimes Id_E, \Lambda_\omega]$ and we assume that the right-hand side is finite. 
\end{definition}

Furthermore, they succeeded in characterizing Nakano semi-positivity by using the above condition. 
To be precise, they proved the following theorem.

\begin{theorem}$($\cite[Theorem 1.1]{DNWZ20}$)$\label{thm:nakanotheta}
Suppose that a K\"ahler manifold $(X, \omega)$ admits a positive holomorphic line bundle, $(E, h)$ is a smooth Hermitian vector bundle over $X$, 
and $\theta \in C^0_{(1, 1)}(X, End(E))$ with $\theta^\star=\theta$. 
We assume that for any $\dbar$-closed $f\in \mathscr{D}_{(n, 1)}(X, E\otimes A)$, and for any positive holomorphic line bundle $(A, h_A)$ such that $\sqrt{-1}\Theta_{(A, h_A)}\otimes Id_E + \theta >_{{\rm Nak.}}0$ on $supp f$, 
there exists $u\in L^2_{(n, 0)}(X, E\otimes A)$ satisfying $\dbar u =f $ and 
$$
\int_X |u|^2_{(\omega, h\otimes h_A)}dV_\omega \leq \int_X \langle B^{-1}_{h_A, \theta}f, f\rangle_{(\omega, h\otimes h_A)}dV_\omega, 
$$
where $B_{h_A, \theta}=[\sqrt{-1}\Theta_{(h_A, \theta)}\otimes Id_E + \theta, \Lambda_\omega]$ and we assume that the right-hand side is finite. 
Then $\sqrt{-1}\Theta_{(E, h)}\geq_{{\rm Nak.}}\theta$.
\end{theorem}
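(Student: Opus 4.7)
The plan is to argue by contradiction. Suppose the Nakano inequality $\ai\Theta_{(E,h)}\geq_{\mathrm{Nak.}}\theta$ fails at some point $x_0\in X$; by Lemma \ref{lem:operator} applied to $\ai\Theta_{(E,h)}-\theta$ there is a nonzero $(n,1)$-form $v_0$ with values in $E_{x_0}$ satisfying
$$
\langle [\ai\Theta_{(E,h)}-\theta,\Lambda_\omega]v_0,v_0\rangle_{(\omega,h)}(x_0)<0.
$$
The strategy is to construct a family of compactly supported $\dbar$-closed test forms that violate the hypothesised $L^2$-estimate.

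First I would localise at $x_0$: choose holomorphic coordinates $z$ centred at $x_0$ and a holomorphic frame of $E$ near $x_0$ in which the metric $h$ is in K\"ahler normal form and $\omega$ is standard at $x_0$, so that $\ai\Theta_{(E,h)}(x_0)$ appears as the quadratic Taylor coefficients of the local matrix of $h$. Using the given positive line bundle $(A,h_A)$ on $X$, I would then consider the family of positive metrics $h_Ae^{-k\varphi}$ on $A$, where $\varphi$ is a smooth function on $X$ which is strictly plurisubharmonic in a neighbourhood of $x_0$ and $k$ is a large positive integer; for $k$ large and $r>0$ small, the Nakano positivity $\ai\Theta_{(A,h_Ae^{-k\varphi})}\otimes Id_E+\theta>_{\mathrm{Nak.}}0$ holds on the ball $B_r(x_0)$, so the operator $B_{h_Ae^{-k\varphi},\theta}$ is uniformly invertible there and the hypothesised estimate applies to any test form supported in $B_r(x_0)$.

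Next I would build explicit test forms $f=\dbar(\chi\sigma)\in\mathscr{D}_{(n,1)}(X,E\otimes A)$, with $\chi$ a cutoff supported in $B_r(x_0)$ and $\sigma$ a local $(n,0)$-section of $E\otimes A$ designed so that, after an appropriate rescaling depending on $k$ and $r$, the form $f$ concentrates on the bad direction $v_0$ at $x_0$ to leading order. Applying the assumption to $f$ yields $u$ with $\dbar u=f$ and $\|u\|^2\leq \int\langle B_{h_Ae^{-k\varphi},\theta}^{-1}f,f\rangle$. On the other hand, the minimal $L^2$-solution $u_{\min}$ is orthogonal to the space of $L^2$ holomorphic $(n,0)$-sections of $E\otimes A$, and for this minimiser the Bochner--Kodaira--Nakano identity with weight $e^{-k\varphi}$ gives a matching lower bound for $\|u_{\min}\|^2$ in terms of the Chern curvature $\ai\Theta_{(E\otimes A,h\otimes h_Ae^{-k\varphi})}$. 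Combining the two bounds, and letting $k\to\infty$ and $r\to 0$ in a coordinated fashion, the contributions of $\varphi$ and of $(A,h_A)$ cancel between the two sides, and what survives in the limit is precisely the pairing $\langle [\ai\Theta_{(E,h)}-\theta,\Lambda_\omega]v_0,v_0\rangle(x_0)$, now appearing with a definite sign that contradicts its assumed negativity.

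The main technical obstacle will be the precise design of $\sigma$, $\chi$, and the weight $k\varphi$, and the coordinated double limit $r\to 0$, $k\to\infty$, so that at leading order the large curvature of $(A,h_Ae^{-k\varphi})$ cancels out between the upper bound $\int\langle B^{-1}_{h_Ae^{-k\varphi},\theta}f,f\rangle$ coming from the hypothesis and the Bochner--Kodaira lower bound for $\|u_{\min}\|^2$, leaving only the Chern curvature of $(E,h)$ and the endomorphism $\theta$ in the limiting inequality. This cancellation is the heart of the converse implication: in the presence of sufficiently positive line bundle twists, the optimal $L^2$-estimate of Definition \ref{def:optimaltheta} faithfully encodes the pointwise Nakano positivity of $\ai\Theta_{(E,h)}-\theta$.
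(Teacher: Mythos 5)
The paper does not prove this theorem; it cites it from \cite{DNWZ20}. The closest in-paper analogue is the $(2)\Rightarrow(1)$ direction of Proposition \ref{prop:nakano}, which is carried out on a Stein polydisc $\Delta^n_r$ with a \emph{globally} strictly plurisubharmonic weight $m\phi$, $\phi=|z|^2-R^2/4$, scaled by $m\to\infty$. Your high-level outline — contradiction at a bad point $x_0$, localization, twisting by a large weight, a Bochner--Kodaira identity, and a limit in which the line-bundle curvature cancels so that only $\ai\Theta_{(E,h)}-\theta$ survives — matches that strategy.

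However, your specific localization mechanism has a genuine gap. You propose to use $h_Ae^{-k\varphi}$ on the \emph{fixed} line bundle $A$, with $\varphi$ a globally defined smooth function that is strictly plurisubharmonic only near $x_0$. The hypothesis of the theorem only applies to metrics making $A$ positive, i.e.\ one needs $\ai\Theta_{(A,h_A)}+k\ai\ddbar\varphi>0$ on all of $X$. For large $k$ this forces $\ai\ddbar\varphi\geq 0$ everywhere, i.e.\ $\varphi$ globally plurisubharmonic; if $X$ is compact (the case of main interest here — a K\"ahler manifold with a positive line bundle is projective) the maximum principle forces $\varphi$ to be constant, and the scaling trick collapses. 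The correct device — used in \cite{DNWZ20} and echoed in the Stein setting of Proposition \ref{prop:nakano} — is to replace $A$ by its powers $A^k$ with $h_A^{\otimes k}$: then $k\ai\Theta_{(A,h_A)}\otimes Id_E+\theta>_{\rm Nak.}0$ on $\mathrm{supp}\,f$ automatically for $k$ large, the operator $B_{h_A^{\otimes k},\theta}$ is invertible, and the limit $k\to\infty$ performs the desired cancellation. Equivalently, in the Stein setting of Proposition \ref{prop:nakano}, psh functions $m\phi$ exist globally and replacing $\psi$ by $m\phi$ works directly.

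Two smaller points. First, your route through the minimal solution $u_{\min}$ and a Bochner--Kodaira lower bound for $\|u_{\min}\|^2$ is more indirect than needed: the paper instead applies Cauchy--Schwarz to $\llangle B^{-1}f,\dbar u\rrangle=\llangle \dbar^\star_\psi(B^{-1}f),u\rrangle$ and then Bochner--Kodaira--Nakano to the $(n,1)$-form $B^{-1}_{\omega_0,\psi}f$ rather than to $u$, yielding directly
$$
\llangle A_{1,\omega_0,h}(B^{-1}_{\omega_0,\psi}f), B^{-1}_{\omega_0,\psi}f\rrangle_{(\omega_0,he^{-\psi})} + \|D'^\star_\psi(B^{-1}_{\omega_0,\psi}f)\|^2_{(\omega_0,he^{-\psi})} \geq 0
$$
for all $\dbar$-closed test $f$. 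Second, the coordinated double limit $r\to 0$, $k\to\infty$ is superfluous; the paper fixes the local radius $R$ and sends only the scaling parameter to infinity, letting the radial weight $\phi$ concentrate the estimate on $\Delta^n_{R/2}$.
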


Here we consider the case that $\theta=0$. In this situation, the condition in Theorem \ref{thm:nakanotheta} is just the optimal $L^2$-estimate condition introduced in Definition \ref{def:optimaltheta}. 
By applying and modifying this theorem, we get the following proposition. 

\begin{proposition}\label{prop:nakano}
Let 
$h$ be a smooth Hermitian metric on $E$. 
We consider the following conditions{\rm :}  
\begin{enumerate}
\item $h$ is Nakano semi-positive. 
\item For any Stein coordinate $(\Omega, \iota)$ such that $E|_{\iota (\Omega)}$ is trivial on $\iota( \Omega)$, 
for any K\"ahler form $\omega_\Omega$ on $\Omega$, 
for any smooth strictly plurisubharmonic function $\psi$ on $\Omega$, for any positive integer $q$ such that $1\leq q\leq n$, 
and for any $\dbar$-closed $f\in L^2_{(n, q)}(\Omega, \iota^\star E ; \omega_\Omega, (\iota^\star h)e^{-\psi})$, 
there exists $u\in  L^2_{(n, q-1)}(\Omega, \iota^\star E ; \omega_\Omega, (\iota^\star h)e^{-\psi})$ satisfying $\dbar u =f$ and 
$$
\int_\Omega |u|^2_{(\omega_\Omega, \iota^\star h)}e^{-\psi}dV_{\omega_\Omega} \leq \int_\Omega \langle B^{-1}_{\omega_\Omega, \psi}f, f \rangle_{(\omega_\Omega, \iota^\star h)}e^{-\psi} dV_{\omega_\Omega}, 
$$
provided the right-hand side is finite. 
\item $(E, h)$ satisfies the optimal $L^2$-estimate condition. 
\end{enumerate}
Then the condition $(1)$ is equivalent to the condition $(2)$. If $X$ admits a K\"ahler metric $\omega$ and a positive holomorphic line bundle on $X$, 
the above three conditions are equivalent. 
\end{proposition}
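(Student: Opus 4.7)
The plan is to combine the classical Andreotti--Vesentini--H\"ormander--Demailly $L^2$-existence theorem (which produces solvability from curvature positivity) with the recent converse of Deng--Ning--Wang--Zhou, namely Theorem \ref{thm:nakanotheta} (which extracts curvature positivity from solvability). Since Nakano semi-positivity of a smooth metric is a pointwise property, the equivalence (1)$\Leftrightarrow$(2) will be verified at an arbitrary point by applying these two tools on a Stein coordinate, and the further equivalence with (3) under the projectivity-type hypothesis will be obtained by applying the same two tools globally on $X$.

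For the implication (1)$\Rightarrow$(2), fix a Stein coordinate $(\Omega,\iota)$ with $E|_{\iota(\Omega)}$ trivial, a K\"ahler form $\omega_\Omega$ on $\Omega$, and a smooth strictly plurisubharmonic function $\psi$ on $\Omega$. The smooth Nakano semi-positivity of $h$ pulls back to $\iota^\star h$, and the twisted metric has curvature
\[
 \ai\Theta_{(\iota^\star E,(\iota^\star h)e^{-\psi})}
 =\ai\Theta_{(\iota^\star E,\iota^\star h)}+\ai\ddbar\psi\otimes\mathrm{Id}_E
 \geq_{\mathrm{Nak.}}\ai\ddbar\psi\otimes\mathrm{Id}_E.
\]
Consequently the curvature operator $B_h:=[\ai\Theta_{(\iota^\star E,(\iota^\star h)e^{-\psi})},\Lambda_{\omega_\Omega}]$ dominates $B_{\omega_\Omega,\psi}$ as a positive Hermitian operator on $(n,q)$-forms for $q\geq 1$, so $B_h^{-1}\leq B_{\omega_\Omega,\psi}^{-1}$. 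Because $\Omega$ is Stein it carries a complete K\"ahler metric, so Demailly's $L^2$-existence theorem applies and yields $u\in L^2_{(n,q-1)}(\Omega,\iota^\star E;\omega_\Omega,(\iota^\star h)e^{-\psi})$ solving $\dbar u=f$ with
\[
 \int_\Omega|u|^2_{(\omega_\Omega,\iota^\star h)}e^{-\psi}\,dV_{\omega_\Omega}
 \leq\int_\Omega\bigl\langle B_h^{-1}f,f\bigr\rangle_{(\omega_\Omega,\iota^\star h)}e^{-\psi}\,dV_{\omega_\Omega}
 \leq\int_\Omega\bigl\langle B_{\omega_\Omega,\psi}^{-1}f,f\bigr\rangle_{(\omega_\Omega,\iota^\star h)}e^{-\psi}\,dV_{\omega_\Omega},
\]
which is precisely (2).

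For the converse (2)$\Rightarrow$(1), pick any $x_0\in X$ and a Stein coordinate $(\Omega,\iota)$ around $x_0$ on which $E$ trivialises. The hypothesis (2), specialised to $q=1$ and to compactly supported test forms, is exactly the optimal $L^2$-estimate condition of Definition \ref{def:optimaltheta} applied on the Stein manifold $\Omega$ to $(\iota^\star E,\iota^\star h)$, the role of the positive line bundle being played by the trivial bundle carrying the weight $e^{-\psi}$. The hypothesis of Theorem \ref{thm:nakanotheta} on the existence of a positive line bundle on the ambient manifold is used in its proof only to manufacture strictly plurisubharmonic weights (and a complete K\"ahler metric), both of which are automatically available on $\Omega$; thus one can run the argument of Theorem \ref{thm:nakanotheta} with $\theta=0$ on $\Omega$ to conclude that $\iota^\star h$ is Nakano semi-positive on $\Omega$, and hence that $h$ is Nakano semi-positive at $x_0$.

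When $X$ admits a K\"ahler metric and a positive holomorphic line bundle, the equivalence (1)$\Leftrightarrow$(3) follows by exactly the same pair of ingredients applied globally: (1)$\Rightarrow$(3) uses that for any positive $(A,h_A)$ the bundle $(E\otimes A,h\otimes h_A)$ is Nakano positive, to which the classical Demailly $L^2$-estimate applies, and (3)$\Rightarrow$(1) is a direct appeal to Theorem \ref{thm:nakanotheta} with $\theta=0$. The principal obstacle is the direction (2)$\Rightarrow$(1): one has to check that the argument of Theorem \ref{thm:nakanotheta} localises cleanly to a Stein coordinate, replacing a globally positive line bundle by the locally prescribed strictly plurisubharmonic weights $\psi$, and that the resulting pointwise curvature inequality at $x_0$ is correctly extracted from the family of $L^2$ inequalities indexed by such $\psi$.
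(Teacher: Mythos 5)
Your proposal is correct and follows essentially the same route as the paper: Demailly's $L^2$-existence theorem for (1)$\Rightarrow$(2), the Deng--Ning--Wang--Zhou converse for (2)$\Rightarrow$(1) and (3)$\Rightarrow$(1), with positivity of the line bundle traded for strictly plurisubharmonic weights on a Stein coordinate. The only difference is one of explicitness: where you invoke ``run the argument of Theorem \ref{thm:nakanotheta} on $\Omega$'' for (2)$\Rightarrow$(1), the paper actually reproduces that argument in full (choosing a constant-coefficient $(n,1)$-form $f$ with $f(0)=f_0$, cutting it off, taking weights $\psi=m\phi$ with $\phi(z)=|z|^2-R^2/4$, and letting $m\to\infty$ in the Bochner--Kodaira inequality to reach a contradiction), precisely because --- as the paper remarks --- ``our situation is slightly different'' from [DNWZ20] and the localization you flag as ``the principal obstacle'' is the content that needs checking.
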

Obviously, the above condition $(2)$ corresponds to the condition in Definition \ref{def:nakanosemipositive}. 
Theorem \ref{thm:nakanotheta} and the following Theorem \ref{thm:demailly} imply that the condition $(1)$ is equivalent to the condition $(3)$. 
The way to prove that the condition $(1)$ is equivalent to the condition $(2)$ is essentially contained in the proof of Theorem \ref{thm:nakanotheta} in \cite{DNWZ20}. 
However, our situation is slightly different. 
Hence, for the sake of completeness, we show the equivalence of $(1)$ and $(2)$ here. 
In our situation, the proof is a little bit simpler. 
Before proving that, we prepare the following $L^2$-estimate theorem.

\begin{theorem}$($cf. \cite{Dem82}, \cite[Chapter VIII, Theorem 6.1]{DemCom}$)$\label{thm:demailly}
Let $(X, \widehat{\omega})$ be a complete K\"ahler manifold, $\omega$ be another K\"ahler metric which is not necessarily complete, and $(E, h)\to X$ be Nakano semi-positive vector bundle. 
We also let $A_{q,\omega, h}=[\sqrt{-1}\Theta_{(E, h)}, \Lambda_\omega]$ be the operator in bidegree $(n, q)$ for $q\geq 1$. 
Then for any $\dbar$-closed $f\in L^2_{(n, q)}(X, E ; \omega, h)$, there exists $u\in L^2_{(n, q-1)}(X, E ; \omega, h)$
satisfying $\dbar u=f$ and 
$$
\int_X |u|^2_{(\omega, h)}dV_\omega \leq \int_X \langle A_{q,\omega,  h}^{-1}f, f\rangle_{(\omega, h)}dV_\omega, 
$$
where we assume that the right-hand side is finite. 
\end{theorem}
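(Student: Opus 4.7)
The plan is to follow the classical Andreotti--Vesentini--H\"ormander--Demailly strategy: combine the Bochner--Kodaira--Nakano a priori inequality with a Hahn--Banach duality argument, and handle the fact that $\omega$ need not be complete by approximating it with the complete K\"ahler metrics $\omega_\epsilon := \omega + \epsilon \widehat{\omega}$ for $\epsilon > 0$, each of which is complete because it dominates a constant multiple of the complete metric $\widehat{\omega}$.

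First I would work with a fixed $\omega_\epsilon$. The Bochner--Kodaira--Nakano identity together with the Nakano semi-positivity of $(E,h)$ yields, for every compactly supported smooth $s \in \mathscr{D}_{(n,q)}(X,E)$, the a priori bound
$$\|\dbar s\|^2_{(\omega_\epsilon,h)} + \|\dbar^\star s\|^2_{(\omega_\epsilon,h)} \geq \llangle A_{q,\omega_\epsilon,h} s, s \rrangle_{(\omega_\epsilon,h)}.$$
Completeness of $\omega_\epsilon$ extends this bound, via the standard cut-off argument (using a sequence of cut-offs whose gradients vanish in the limit, enabled by completeness), to all $s \in \mathrm{Dom}(\dbar)\cap \mathrm{Dom}(\dbar^\star)$.

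Given a $\dbar$-closed $f$, I would then consider the functional $\dbar^\star g \longmapsto \llangle f, g_1 \rrangle_{(\omega_\epsilon,h)}$ defined on $\dbar^\star(\mathrm{Dom}(\dbar^\star))\subset L^2_{(n,q-1)}(X,E;\omega_\epsilon,h)$, where $g = g_1 + g_2$ is the orthogonal decomposition with $g_1 \in \ker \dbar$. The Cauchy--Schwarz inequality for the sesquilinear form $\llangle A_{q,\omega_\epsilon,h}\cdot,\cdot\rrangle$, combined with the a priori inequality above applied to $g_1$ (for which $\dbar g_1 = 0$), shows that this functional is bounded by $\bigl(\int_X \langle A_{q,\omega_\epsilon,h}^{-1}f, f\rangle_{(\omega_\epsilon,h)}\, dV_{\omega_\epsilon}\bigr)^{1/2}$ times the $L^2$ norm of $\dbar^\star g$. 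Hahn--Banach and Riesz representation then yield $u_\epsilon \in L^2_{(n,q-1)}(X,E;\omega_\epsilon,h)$ with $\dbar u_\epsilon = f$ and
$$\int_X |u_\epsilon|^2_{(\omega_\epsilon,h)}\, dV_{\omega_\epsilon} \leq \int_X \langle A_{q,\omega_\epsilon,h}^{-1}f, f\rangle_{(\omega_\epsilon,h)}\, dV_{\omega_\epsilon}.$$

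Finally I would let $\epsilon \to 0^+$. A pointwise linear-algebra computation, using the special structure of $(n,q)$-forms, shows that $|u_\epsilon|^2_{(\omega_\epsilon,h)}\, dV_{\omega_\epsilon}$ and $\langle A_{q,\omega_\epsilon,h}^{-1}f, f\rangle_{(\omega_\epsilon,h)}\, dV_{\omega_\epsilon}$ behave monotonically in $\epsilon$, converging to the corresponding quantities for $\omega$; hence the estimate is preserved in the limit. Extracting a weakly convergent subsequence of $\{u_\epsilon\}$ in $L^2_{\mathrm{loc}}$ and passing to the weak limit $u$ yields $\dbar u = f$ and, via lower semi-continuity of the norm, the desired global estimate. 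The main obstacle is precisely this last step: one must carefully verify the pointwise monotonicity of $\langle A_{q,\omega_\epsilon,h}^{-1}f, f\rangle_{(\omega_\epsilon,h)}\, dV_{\omega_\epsilon}$ in $\epsilon$, bearing in mind that $A_{q,\omega,h}$ is only semi-positive, so that $\langle A^{-1}f, f\rangle$ must be interpreted as $\inf_{Ag = f}\langle A g, g\rangle$ (with value $+\infty$ when no such $g$ exists), and noting that this monotonicity is a feature specifically of top-degree forms rather than of general bidegrees.
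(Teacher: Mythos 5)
The paper states Theorem \ref{thm:demailly} as a cited result from \cite{Dem82} and \cite[Chapter~VIII, Theorem~6.1]{DemCom} and does not reprove it, so there is no in-paper proof to compare against. Your proposal correctly reproduces the classical Demailly argument from those references: the Bochner--Kodaira--Nakano a priori estimate plus cut-offs on the complete metric $\omega_\epsilon = \omega + \epsilon\widehat{\omega}$, the Hahn--Banach/Riesz duality step using the decomposition $g = g_1 + g_2$ with $g_1 \in \ker\dbar$ and the Cauchy--Schwarz inequality for the semi-positive form $\llangle A\,\cdot,\cdot\rrangle$ with the interpretation $\langle A^{-1}f,f\rangle = \inf_{Ag=f}\langle Ag,g\rangle$, and finally the passage $\epsilon\to 0^+$ via the monotonicity of $|u|^2_{\omega,h}\,dV_\omega$ and $\langle A_{q,\omega,h}^{-1}f,f\rangle_{\omega,h}\,dV_\omega$ in $\omega$ for $(n,q)$-forms (Demailly's comparison lemma), which you correctly flag as the point peculiar to the $(n,q)$-bidegree.
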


\begin{proof}[\indent\sc Proof of Proposition \ref{prop:nakano}]

First, we assume that $h$ is Nakano semi-positive. We take an arbitrary Stein coordinate $(\Omega, \iota )$ such that $E|_{\iota (\Omega)}$ is trivial on $\iota(\Omega)$,
an arbitrary K\"ahler metric $\omega_\Omega$ on $\Omega$, 
and an arbitrary smooth strictly plurisubharmonic function $\psi $ on $\Omega$. 
Considering the twisted weight $(\iota^\star h)e^{-\psi}$, 
we have that $\sqrt{-1}\Theta_{(\iota^\star E, (\iota^\star h)e^{-\psi})}=\sqrt{-1}\Theta_{(\iota^\star E, (\iota^\star h))}+ \ai \ddbar\psi \otimes Id_{\iota^\star E} $ and 
\begin{align*}
A_{q, \omega_\Omega, (\iota^\star h)e^{-\psi}}&=[\ai \Theta_{(\iota^\star E, \iota^\star h)}, \Lambda_{\omega_\Omega}]+ [\ai \ddbar \psi \otimes Id_{\iota^\star E}, \Lambda_{\omega_\Omega}]\\
&=A_{q,\omega_\Omega, {\iota^\star h}}+B_{\omega_\Omega, \psi}.
\end{align*}
We have $(\iota^\star h)e^{-\psi}$ is Nakano positive on $\iota^\star E$. 
Then Theorem \ref{thm:demailly} implies that for any $q\geq 1$ and for any $\dbar$-closed $f\in L^2_{(n, q)}(\Omega, \iota^\star E ; \omega_\Omega, (\iota^\star h)e^{-\psi})$, 
we have $u\in L^2_{(n, q-1)}(\Omega, \iota^\star E ; \omega_\Omega, (\iota^\star h)e^{-\psi})$ satisfying $\dbar u =f$ and 
$$
\int_\Omega |u|^2_{(\omega_\Omega, \iota^\star h)}e^{-\psi} dV_{\omega_\Omega} \leq \int_\Omega \langle A^{-1}_{q,\omega_\Omega, (\iota^\star h)e^{-\psi}}f, f \rangle_{(\omega_\Omega, \iota^\star h)}e^{-\psi}dV_{\omega_\Omega}.
$$
Since $\iota^\star h$ is also Nakano semi-positive, we have the inequality 
$$
\langle A^{-1}_{q,\omega_\Omega, (\iota^\star h)e^{-\psi}}f, f\rangle_{(\omega_\Omega, \iota^\star h)}  \leq \langle B^{-1}_{\omega_\Omega, \psi}f, f\rangle_{(\omega_\Omega, \iota^\star h)}. 
$$
Therefore, we also have the estimate 
$$
\int_\Omega |u|^2_{(\omega_\Omega, \iota^\star h)}e^{-\psi} dV_{\omega_\Omega}\leq \int_\Omega \langle B^{-1}_{\omega_\Omega, \psi}f, f \rangle_{(\omega_\Omega, \iota^\star h)}e^{-\psi}dV_{\omega_\Omega}.
$$

Next, we assume that the condition $(2)$. Suppose that $h$ is not Nakano semi-positive at some point $x_0\in X$.
We take a Stein coordinate $(\Delta^n_r, \iota )$ such that $\iota(0)=x_0$ and $E|_{\iota(\Delta^n_r)}$ is trivial for some $r>0$, take the standard K\"ahler metric $\omega_0=\ai \ddbar |z|^2$ on $\Delta^n_r$, and take a frame $(e_1, \cdots, e_r)$ of $\iota^\star E$ on $\Delta^n_r$ such that $(e_1, \cdots , e_r)$ is orthonormal at $0\in \Delta^n_r$.
Then $(\iota^\star E, \iota^\star h)$ is not Nakano semi-positive at $0\in \Delta^n_r $.
For the sake of simplicity, we also write $(E, h)(=(\iota^\star E, \iota^\star h))$ on $\Delta^n_r$. 
Note that, by Lemma \ref{lem:operator}, the operator $[\ai\Theta_{(E,h)}, \Lambda_{\omega_0}]$ is not semi-positive definite at $0\in \Delta^n_r$.
Then there exists $f_0\in \wedge^{(n,1)}T^\star_{\Delta^n_r, 0}\otimes E_{0}$ such that 
$$
\langle [\ai\Theta_{(E,h)}, \Lambda_{\omega_0}]f_0, f_0 \rangle_{(\omega_0, h)}=\langle A_{1, \omega_0, h}f_0, f_0\rangle_{(\omega_0, h)}<0. 
$$

We fix a smooth strictly plurisubharmonic function $\psi$ on $\Delta^n_r$. 
Then for any $\dbar$-closed $f\in \mathscr{D}_{(n, 1)}(\Delta^n_r, E)\subset L^2_{(n, 1)}(\Delta^n_r, E ; \omega_0, he^{-\psi})$, 
there exists $u\in C^\infty_{(n, 0)}(\Delta^n_r, E) $ satisfying $\dbar u =f$ and 
$$
\int_{\Delta^n_r}|u|^2_{(\omega_0, h)}e^{-\psi}dV_{\omega_0}\leq \int_{\Delta^n_r} \langle B^{-1}_{\omega_0, \psi}f, f \rangle_{(\omega_0, h)}e^{-\psi}dV_{\omega_0}. 
$$
Therefore, we have 
\begin{align*}
|\llangle B^{-1}_{\omega_0, \psi}f, f \rrangle_{(\omega_0, he^{-\psi})} |^2 &= |\llangle B^{-1}_{\omega_0, \psi}f, \dbar u \rrangle_{(\omega_0, he^{-\psi})} |^2 \\
& =|\llangle \dbar^\star_{\psi}(B^{-1}_{\omega_0, \psi}f), u \rrangle_{(\omega_0, he^{-\psi})} |^2 \\
& \leq \| \dbar^\star_{\psi}(B^{-1}_{\omega_0, \psi}f) \|^2_{(\omega_0, he^{-\psi})} \| u \|^2_{(\omega_0, he^{-\psi})}\\
& \leq \| \dbar^\star_{\psi}(B^{-1}_{\omega_0, \psi}f) \|^2_{(\omega_0, he^{-\psi})} |\llangle B^{-1}_{\omega_0, \psi}f, f \rrangle_{(\omega_0, he^{-\psi})}|. 
\end{align*}
In short, we have $|\llangle B^{-1}_{\omega_0, \psi}f, f \rrangle_{(\omega_0, he^{-\psi})} |\leq \| \dbar^\star_{\psi}(B^{-1}_{\omega_0, \psi}f) \|^2_{(\omega_0, he^{-\psi})}$ for any $\dbar$-closed $f$.
By using the Bochner-Kodaira-Nakano identity $\Delta''_\psi=\Delta'_\psi + [\ai \Theta_{(E, he^{-\psi})}, \Lambda_{\omega_0}]= \Delta'_\psi + A_{1, \omega_0, h}+B_{\omega_0, \psi}$
(cf. \cite[(4.6)]{Dem12}), we get 
\begin{align*}
&\| \dbar^\star_{\psi}(B^{-1}_{\omega_0, \psi}f) \|^2_{(\omega_0, he^{-\psi})} \\
&=\llangle \Delta''_\psi (B^{-1}_{(\omega_0, \psi)}f), B^{-1}_{(\omega_0, \psi)}f\rrangle_{(\omega_0, he^{-\psi})} - \| \dbar(B^{-1}_{(\omega_0, \psi)}f )\|^2_{(\omega_0, he^{-\psi})}  \\
&\leq \llangle \Delta'_\psi (B^{-1}_{(\omega_0, \psi)}f), B^{-1}_{(\omega_0, \psi)}f\rrangle_{(\omega_0, he^{-\psi})} + \llangle A_{1, \omega_0, h} (B^{-1}_{(\omega_0, \psi)}f), B^{-1}_{(\omega_0, \psi)}f\rrangle_{(\omega_0, he^{-\psi})}+ \llangle f, B^{-1}_{(\omega_0, \psi)}f\rrangle_{(\omega_0, he^{-\psi})}.
\end{align*}
Then we obtain 
$$
\llangle A_{1, \omega_0, h} (B^{-1}_{(\omega_0, \psi)}f), B^{-1}_{(\omega_0, \psi)}f\rrangle_{(\omega_0, he^{-\psi})} + \| D'^\star_\psi(B^{-1}_{(\omega_0, \psi)}f )\|^2_{(\omega_0, he^{-\psi})} \geq 0. 
$$

We let $f=\sum_{j, \lambda}f_{j\lambda}dz_1 \wedge \cdots \wedge dz_n \wedge d\bar{z}_j\otimes e_\lambda \in C^\infty_{(n, 1)}(\Delta^n_r, E)$ be a $\dbar$-closed $(n, 1)$-form 
with constant coefficients such that $f(0)=f_0$. 
We can take a positive constant $R\in (0, r)$ such that 
$$
\langle [\ai \Theta_{(E, h)}, \Lambda_{\omega_0}]f, f\rangle_{(\omega_0, h)} = \langle A_{1, \omega_0, h}f, f\rangle_{(\omega_0, h)}<-c
$$
on $\Delta^n_R$ for some positive constant $c>0$. 

Choose a cut-off function $\chi\in \mathscr{D}_{(0, 0)}(\Delta^n_R, \mathbb{R})$ such that $0\leq \chi \leq 1$ and $\chi|_{\Delta^n_{\frac{R}{2}}}\equiv 1$.
We define $v\in \mathscr{D}_{(n, 0)}(\Delta^n_{r}, E)$ by 
$$
v=(-1)^n\sum_{j, \lambda}f_{j\lambda}\bar{z}_j \chi dz_1\wedge \cdots \wedge dz_n\otimes e_\lambda,
$$
and define $g$ by $\dbar v=g$. Then $g\in \mathscr{D}_{(n, 1)}(\Delta^n_r, E)$ and $g=f$ on $\Delta^n_{\frac{R}{2}}$. 
Set $\phi(z)=|z|^2-\frac{R^2}{4}$. Then we have $B_{(\omega_0, m\phi)}=m\cdot $.
We define $\alpha_m:=B^{-1}_{(\omega_0, m\phi)}g=\frac{1}{m}g$. 
Considering the commutation relation $\ai [\Lambda_{\omega_0}, \dbar ]=D'^\star_{m\phi}$ (cf. \cite[(4.5)]{Dem12}), we obtain $D'^\star_{m\phi}\alpha_m=0$ on $\Delta^n_{\frac{R}{2}}$
and $|D'^\star_{m\phi}\alpha_m|_{(\omega_0, h)}\leq \frac{C}{m}$ for some positive constant $C>0$ on $\Delta^n_R \setminus \overline{\Delta}^n_{\frac{R}{2}}$. 
We also have $\langle A_{1, \omega_0, h}\alpha_m, \alpha_m\rangle_{(\omega_0, h)}<-\frac{c}{m^2}$ on $\Delta^n_{\frac{R}{2}}$
and $\langle A_{1, \omega_0, h}\alpha_m, \alpha_m\rangle_{(\omega_0, h)} \leq \frac{C'}{m^2}$ for some $C'>0$ on $\Delta^n_R \setminus \overline{\Delta}^n_{\frac{R}{2}}$
since $g$ has compact support in $\Delta^n_{R}$. Set $C'':=C^2+C'$. To summarize, we obtain 
\begin{align*}
0&\leq \llangle A_{1, \omega_0, h} (B^{-1}_{(\omega_0, m\phi)}g), B^{-1}_{(\omega_0, m\phi)}g\rrangle_{(\omega_0, he^{-m\phi})} + \| D'^\star_{m\phi}(B^{-1}_{(\omega_0, m\phi)}g)\|^2_{(\omega_0, he^{-m\phi})}\\
&=\llangle A_{1, \omega_0, h} \alpha_m, \alpha_m \rrangle_{(\omega_0, he^{-m\phi})} + \| D'^\star_{m\phi}\alpha_m \|^2_{(\omega_0, he^{-m\phi})}\\
&=\int_{\Delta^n_{\frac{R}{2}}}\langle A_{1, \omega_0, h}\alpha_m, \alpha_m\rangle_{(\omega_0, h)}e^{-m\phi}dV_{\omega_0}+\int_{\Delta^n_R \setminus \overline{\Delta}^n_{\frac{R}{2}}}\langle A_{1, \omega_0, h}\alpha_m, \alpha_m\rangle_{(\omega_0, h)}e^{-m\phi}dV_{\omega_0}\\
& \ \ + \int_{\Delta^n_R \setminus \overline{\Delta}^n_{\frac{R}{2}}}|D'^\star_{m\phi}\alpha_m|^2_{(\omega_0, h)}e^{-m\phi}dV_{\omega_0}\\
& \leq -\frac{c}{m^2}\int_{\Delta^n_{\frac{R}{2}}}e^{-m\phi}dV_{\omega_0} + \frac{C''}{m^2}\int_{\Delta^n_R \setminus \overline{\Delta}^n_{\frac{R}{2}}}e^{-m\phi}dV_{\omega_0}
\end{align*}
for any $m\in \mathbb{N}$. Hence, we get 
$$
-c\int_{\Delta^n_{\frac{R}{2}}}e^{-m\phi}dV_{\omega_0} + C''\int_{\Delta^n_R \setminus \overline{\Delta}^n_{\frac{R}{2}}}e^{-m\phi}dV_{\omega_0}\geq 0.
$$
Since $\phi<0$ on $\Delta^n_{\frac{R}{2}}$ and $\phi>0 $ on $\Delta^n_R \setminus \overline{\Delta}^n_{\frac{R}{2}}$, the first term has a negative upper bound which is independent of $m$
$$
-c\int_{\Delta^n_{\frac{R}{2}}}e^{-m\phi}dV_{\omega_0}<-c |\Delta^n_{\frac{R}{2}}|.
$$
The second term goes to zero as $m\to +\infty$ by Lebesgue's dominated convergence theorem. 
Then for sufficiently large $m>\!>1$, we have 
$$
-c\int_{\Delta^n_{\frac{R}{2}}}e^{-m\phi}dV_{\omega_0} + C''\int_{\Delta^n_R \setminus \overline{\Delta}^n_{\frac{R}{2}}}e^{-m\phi}dV_{\omega_0}< 0,
$$
which is a contradiction. Consequently, we can conclude that $h$ is Nakano semi-positive on $\Delta^n_r$. 
\end{proof}

\subsection{Singular Hermitian metrics}
In this subsection, we consider the case that a Hermitian metric has singularities. 
First, we introduce the definition of singular Hermitian metrics on vector bundles. 

\begin{definition} $($\cite[Section 3]{BP08}, \cite[Definition 17.1]{HPS18}, \cite[Definition 2.2.1]{PT18} and \cite[Definition 1.1]{Rau15}$)$\label{def:shm}
We say that $h$ is a {\it singular Hermitian metric}
on $E$ if $h$ is a measurable map from the base manifold $X$ to
the space of non-negative Hermitian forms on the fibers satisfying $0< {\rm det} h < +\infty $ almost everywhere.
\end{definition}

Related to the notion of singular Hermitian metrics, we introduce the ideal sheaves. 

\begin{definition}$($\cite{Nad90}$)$
Let $h$ be a singular Hermitian metric on a holomorphic line bundle $L\to X$, and $\varphi$ be the local weight of $h$, i.e. $h=e^{-\varphi}$ locally. 
Then we define the ideal subsheaf $\mathscr{I}(h)\subset \mathscr{O}_X$ of germs of holomorphic functions as follows{\rm :}
$$
\mathscr{I}(h)_x := \{ f_x\in\mathscr{O}_{X, x} \mid  |f_x|^2e^{-\varphi} \text{~is locally integrable around~} x \}.
$$
\end{definition}
We can easily verify that the above definition is independent of the choice of local weights.
In \cite{Nad90}, Nadel proved that $\mathscr{I}(h)$ is coherent by using the H\"ormander $L^2$-estimate. 
We can also define a higher-rank analogue of the multiplier ideal sheaf $\mathscr{I}(h)$. 

\begin{definition}$($cf. \cite{deC98}$)$
Let $h$ be a singular Hermitian metric on a holomorphic vector bundle $E\to X$. 
We define the ideal subsheaf $\mathscr{E}(h)$ of germs of local holomorphic sections of $E$ as follows{\rm :}
$$
\mathscr{E}(h)_x:= \{ s_x \in \mathscr{O}(E)_x \mid |s_x|^2_h \text{~is locally integrable around~}x\}.
$$
\end{definition}
In \cite{HI19}, we prove that $\mathscr{E}(h)$ is coherent if $h$ satisfies the twisted H\"ormander condition above. 
We can also show that $\mathscr{E}(h)$ is coherent when $h$ is a Nakano semi-positive singular Hermitian metric (cf. Proposition \ref{prop:coherence}). 

The Chern curvature tensor $\Theta_{(E, h)}$ of a smooth Hermitian metric $h$ can be locally defined by $\dbar (h^{-1}\partial h)$. 
On a holomorphic line bundle, the Chern curvature of a positive or negative singular Hermitian metric can be also defined in the sense of currents. 
However, for a holomorphic vector bundle $E$ of rank $E\geq 2$, it is not possible to define the Chern curvature current with measure coefficients in general. 
This phenomenon was observed by Raufi in \cite{Rau15}. 
Before showing the example, we introduce the definitions of Griffiths semi-negativity and Griffiths semi-positivity. 
\begin{definition}$($\cite[Definition 3.1]{BP08}, \cite[Definition 2.2.2]{PT18} and \cite[Definition 1.2]{Rau15}$)$\label{GrifPos}
We say that a singular Hermitian metric $h$ is: 
\begin{enumerate}
\item {\it Griffiths semi-negative} if $|u|_h$ is plurisubharmonic for any local holomorphic section $u\in \mathscr{O}(E)$ of $E$. 
\item {\it Griffiths semi-positive} if the dual metric $h^\star$ on $E^\star$ is Griffiths semi-positive. 
\end{enumerate}
\end{definition}

This definition arises from a characterization of Griffiths semi-positivity (see Proposition \ref{prop:grif}).
Then Raufi found the following example. 
\begin{theorem}$($\cite[Theorem 1.5]{Rau15}$)$
Let $E$ be the trivial vector bundle $\Delta\times \mathbb{C}^2$ over $\Delta:=\Delta^1_1\subset \mathbb{C}$. 
Let $h$ be the singular Hermitian metric 
$$
h= \left(
\begin{array}{cc}
1+|z|^2 & z \\
\bar{z} & |z|^2
\end{array}
\right) . 
$$ 
Then $h$ is Griffiths semi-negative, and 
$\Theta_{(E, h)}$ is not a current with measure coefficients.
\end{theorem}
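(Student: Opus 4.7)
The plan is to verify both assertions by direct calculation on this explicit $2\times 2$ metric. For Griffiths semi-negativity, by Definition \ref{GrifPos} it suffices to check that $|u|_h$ is plurisubharmonic on $\Delta$ for every holomorphic section $u=(u_1,u_2)^T\in\mathscr{O}(\Delta)^{\oplus 2}$. Expanding $u^\star h u$ gives
$$
|u|_h^2 = (1+|z|^2)|u_1|^2 + z\bar u_1 u_2 + \bar z u_1\bar u_2 + |z|^2|u_2|^2,
$$
and the key algebraic observation I would exploit is the clean decomposition
$$
|u|_h^2 = |u_1+zu_2|^2 + |zu_1|^2,
$$
which one verifies by expanding the right-hand side. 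Since $u_1+zu_2$ and $zu_1$ are holomorphic in $z$, $|u|_h$ is the Euclidean norm of a holomorphic $\mathbb{C}^2$-valued map and is therefore plurisubharmonic. This settles the first assertion.

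For the non-representability of $\Theta_{(E,h)}$ by measure coefficients, I would work first on the punctured disc $\Delta^\star:=\Delta\setminus\{0\}$, where $h$ is smooth and non-degenerate with $\det h=|z|^4$. A short matrix calculation produces the Chern connection form
$$
A = h^{-1}\partial h = \begin{pmatrix} 1/z & 0 \\ -1/z^2 & 1/z \end{pmatrix} dz,
$$
so $\Theta_{(E,h)}|_{\Delta^\star}=\dbar A$ in the classical sense. Any global current on $\Delta$ deserving the name $\Theta_{(E,h)}$ must restrict to this smooth form on $\Delta^\star$, hence its distributional extension across the origin is forced entry by entry. The diagonal coefficient extends to the Radon measure $\dbar(1/z)\wedge dz=\pi\,\delta_0\,d\bar z\wedge dz$, but for the $(2,1)$-entry, using $1/z^2=-\partial_z(1/z)$ and commuting distributional derivatives, one obtains
$$
-\dbar(1/z^2)\wedge dz = \partial_z\bigl(\dbar(1/z)\bigr)\wedge dz = \pi\,\partial_z\delta_0\,d\bar z\wedge dz.
$$

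The main obstacle is to rigorously exclude that $\partial_z\delta_0$ can be a Radon measure; this is the standard fact that it is a distribution of order exactly $1$, proved by testing against a sequence of smooth compactly supported functions $\chi_n$ with $\chi_n(0)=0$, $\|\chi_n\|_\infty\leq 1$, and $\partial_z\chi_n(0)\to\infty$, which yields an unbounded pairing incompatible with the finite total variation of any complex Radon measure. To tie up the argument cleanly, I would also justify that the curvature of $h$ really is forced to coincide with this distributional extension: for any smooth decreasing approximation $h_\varepsilon\searrow h$, the convergence is smooth on $\Delta^\star$, so $\Theta_{(E,h_\varepsilon)}\to\dbar A$ there in the sense of distributions, leaving no room for an alternative measure-valued candidate on $\Delta$.
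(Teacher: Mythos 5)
This statement appears in the paper only as a citation of Raufi's Theorem 1.5; the paper gives no proof of its own, so the comparison must be against Raufi's original argument.

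Your verification of Griffiths semi-negativity is correct and pleasantly clean: the identity $|u|_h^2 = |u_1+zu_2|^2 + |zu_1|^2$ does hold, and it exhibits $|u|_h$ as the Euclidean norm of the holomorphic map $z\mapsto (u_1+zu_2, zu_1)$, which is plurisubharmonic. The connection matrix $A = h^{-1}\partial h = \bigl(\begin{smallmatrix}1/z & 0\\ -1/z^2 & 1/z\end{smallmatrix}\bigr)dz$ on $\Delta^\star$ is also computed correctly, and you correctly identify $\partial_z\delta_0$ as a distribution of order $1$.

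The gap is in the step you yourself flag as needing to be ``tied up'': you have not actually shown that any reasonable candidate for $\Theta_{(E,h)}$ must contain the $\partial_z\delta_0$ term. Since $\dbar A \equiv 0$ on $\Delta^\star$, \emph{every} current that restricts to $\Theta_{(E,h)}|_{\Delta^\star}$ there is supported at the origin, and such currents include $c\,\delta_0\,d\bar z\wedge dz$, which \emph{is} a measure. Your closing remark that ``$\Theta_{(E,h_\varepsilon)}\to\dbar A$ on $\Delta^\star$, leaving no room for an alternative measure-valued candidate'' therefore does not follow: convergence on the punctured disc says nothing about what mass the regularized curvatures may or may not deposit at $z=0$. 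To close the gap you would need either (i) to fix a definition of the singular curvature as $\dbar\widetilde{A}$ for some distributional extension $\widetilde{A}$ of $A$ across $0$, and then argue that all such extensions differ by point-supported distributions whose $\dbar$ only produces $\partial_{\bar z}$-derivatives of $\delta_0$ and hence can never cancel the $\pi\,\partial_z\delta_0$ term; or (ii) to follow Raufi's own route, which is to compute $\Theta_{(E,h_\varepsilon)}$ explicitly for a natural regularization $h_\varepsilon$ and show that the total variation of its coefficients near $0$ diverges as $\varepsilon\to 0$, so that no weak-$\ast$ limit in the space of measures can exist. Your computation identifies the right obstruction, but as written it proves non-measurability only of one particular distributional extension rather than of whatever curvature current the statement is asserting fails to exist.
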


This result implies that we cannot define the positivity or negativity by using the Chern curvature currents. 
Furthermore, the strict positivity or negativity is not generally formulated. 
If there is a K\"ahler metric on $X$, we can define the strict Griffiths positivity as follows. 

\begin{definition}$($\cite[Definition 2.6]{Ina18}$)$\label{def:strictgrif}
Let $\omega_X$ be a K\"ahler metric on $X$. 
We say that a singular Hermitian metric $h$ is {\it strictly Griffiths $\delta_{\omega_X}$-positive} if for any open subset $U$ and for any K\"ahler potential $\varphi$ of $\omega_X$ on $U$, i.e. $\ai \ddbar \varphi=\omega_X$ on $U$, 
$he^{\delta\varphi}$ is Griffiths semi-positive on $U$.
%
\end{definition}


For Nakano semi-positivity of singular Hermitian metrics, we can characterize it by using Proposition \ref{prop:nakano} (see Definition \ref{def:nakanosemipositive}). 
We can also define the strict Nakano $\delta_{\omega_X}$-positivity of singular Hermitian metrics as follows. 
\begin{definition}\label{def:strictnakano}
Let $(X, \omega_X)$ be a K\"aher manifold. 
We say that $h$ is {\it globally (resp. locally) strictly Nakano $\delta_{\omega_X}$-positive} if for any open subset $U$ and for any K\"ahler potential $\varphi$ of $\omega_X$, i.e. $\ai \ddbar \varphi = \omega_X$ on $U$, 
$he^{\delta \varphi}$ is globally (resp. locally) Nakano semi-positive on $U$ in the sense of Definition \ref{def:nakanosemipositive} (resp. Definition \ref{def:localnakano}). 
%
\end{definition}

\begin{remark}\label{rem:2k}
We consider the following condition related to the condition $(2)$ in Proposition \ref{prop:nakano} for $k\geq 1$. 

(2-k): For any Stein coordinate $(\Omega, \iota)$ such that $E|_{\iota (\Omega)}$ is trivial on $\iota( \Omega)$, 
for any K\"ahler form $\omega_\Omega$ on $\Omega$, 
for any smooth strictly plurisubharmonic function $\psi$ on $\Omega$, for any positive integer $q$ such that $1\leq q\leq k$, 
and for any $\dbar$-closed $f\in L^2_{(n, q)}(\Omega, \iota^\star E ; \omega_\Omega, (\iota^\star h)e^{-\psi})$, 
there exists $u\in  L^2_{(n, q-1)}(\Omega, \iota^\star E ; \omega_\Omega, (\iota^\star h)e^{-\psi})$ satisfying $\dbar u =f$ and 
$$
\int_\Omega |u|^2_{(\omega_\Omega, \iota^\star h)}e^{-\psi}dV_{\omega_\Omega} \leq \int_\Omega \langle B^{-1}_{\omega_\Omega, \psi}f, f \rangle_{(\omega_\Omega, \iota^\star h)}e^{-\psi} dV_{\omega_\Omega}, 
$$
provided the right-hand side is finite. 

The proof of Proposition \ref{prop:nakano} suggests that we only have to consider all $(n, 1)$-forms $f$, not all $(n, q)$-forms for $1\leq q \leq n$.
However, the conditions (2-1),$\cdots$,(2-n) are equivalent to each other under the assumption that $h$ is smooth.
Hence, in this paper, we adopt the seemingly stronger condition (2-n) (=Definition \ref{def:nakanosemipositive}) to define global Nakano semi-positivity of singular Hermitian metrics. 
Related to this remark, we propose Question \ref{ques:2k} in Section \ref{sec:problems}.
\end{remark}

\subsection{Big vector bundles}
In this subsection, to prove Theorem \ref{thm:bigvanishing}, we prepare notions of big vector bundles. 
We let $X$ be a projective manifold. 

\begin{definition}$(${\cite[Section 2]{BKKMSU15}}$)$\label{def:baselocus}
	We define the {\it base locus} of $E$ as the set 
	$$
	{\rm Bs}(E):= \{x\in X \mid  H^0(X, E)\to E_x \text{ is not surjective}\},
	$$
	and the {\it stable base locus} of $E$ as the set 
	$$
	\mathbb{B}(E):=\bigcap_{m\in \mathbb{N}}{\rm Bs}(S^mE).
	$$ 
For an ample line bundle $A$, we also define the {\it augmented base locus} of $E$ by 
$$
\mathbb{B}^{A}_+(E):= \bigcap_{p/q\in \mathbb{Q}}\mathbb{B}(S^qE\otimes A^{-p}).
$$
Note that $\mathbb{B}^{A}_+(E)$ does not depend on the choice of the ample line bundle. 
Hence, we write $\mathbb{B}_+(E)$ for simplicity. 
\end{definition}

With these notations, we introduce the following definitions. 
We let $\pi: \mathbb{P}(E)\to X$ denote the projective bundle of rank one quotients of $E$,
and $\mathscr{O}_{\mathbb{P}(E)}(1)$ denote the universal quotient of $\pi^\star E$ over $\mathbb{P}(E)$.

\begin{definition}$($\cite[Theorem 1.1, Definition 6.1]{BKKMSU15}$)$\label{def:lbigvbig}
	We say that 
	\begin{enumerate}
		\item $E$ is {\it L-big} if $\mathscr{O}_{\mathbb{P}(E)}(1)$ is big on $\mathbb{P}(E)$.
		\item $E$ is {\it V-big} (or {\it Viehweg-big}) if $\mathbb{B}_+(E)\neq X$. 
	\end{enumerate}
\end{definition}

We remark that if $E$ is V-big, then $E$ is L-big as well \cite[Corollary 6.5]{BKKMSU15}. 
In order to prove Theorem \ref{thm:bigvanishing}, we need the following proposition. 

\begin{proposition}$($\cite[Proposition 3.2]{BKKMSU15}$)$\label{prop:baseloci}
	Keep the notation. Then 
	$$
	\pi(\mathbb{B}_+(\mathscr{O}_{\mathbb{P}(E)}(1)))=\mathbb{B}_+(E). 
	$$
\end{proposition}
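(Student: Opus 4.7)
The plan is to reduce the statement to a comparison of base loci via the projection formula
\[
\pi_*\bigl(\mathscr{O}_{\mathbb{P}(E)}(m)\otimes \pi^\star L\bigr)=S^m E \otimes L, \qquad m\geq 0,
\]
which canonically identifies $H^0(\mathbb{P}(E),\mathscr{O}_{\mathbb{P}(E)}(m)\otimes \pi^\star L)$ with $H^0(X,S^m E\otimes L)$ for every line bundle $L$ on $X$. Fix an ample line bundle $A$ on $X$; by a standard computation on projective bundles, there exists $s\in \mathbb{N}$ for which $H:=\mathscr{O}_{\mathbb{P}(E)}(s)\otimes \pi^\star A$ is ample on $\mathbb{P}(E)$. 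Using this $H$ as the test ample line bundle one has
\[
\mathbb{B}_+\bigl(\mathscr{O}_{\mathbb{P}(E)}(1)\bigr)=\bigcap_{\substack{p,q\in\mathbb{N}\\ q>sp}}\mathbb{B}\bigl(\mathscr{O}_{\mathbb{P}(E)}(q-sp)\otimes \pi^\star A^{-p}\bigr),
\]
to be compared with $\mathbb{B}_+(E)=\bigcap_{p,q}\mathbb{B}(S^q E\otimes A^{-p})$.

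For the inclusion $\pi(\mathbb{B}_+(\mathscr{O}_{\mathbb{P}(E)}(1)))\subseteq \mathbb{B}_+(E)$ I argue contrapositively: suppose $x\notin \mathbb{B}_+(E)$, so there exist $p,q,m$ such that the evaluation map $H^0(X,S^{mq}E\otimes A^{-mp})\to (S^{mq}E\otimes A^{-mp})_x$ is surjective. Via the fiberwise identification $(S^{mq}E)_x=H^0(\pi^{-1}(x),\mathscr{O}_{\mathbb{P}(E)}(mq))$ coming from the projection formula, surjectivity at $x$ is equivalent to the global sections of $\mathscr{O}_{\mathbb{P}(E)}(mq)\otimes \pi^\star A^{-mp}$ restricting onto the full space of sections on $\pi^{-1}(x)\cong \mathbb{P}^{r-1}$; in particular every point of $\pi^{-1}(x)$ admits a non-vanishing global section. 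Choosing $p,q$ so that $q>sp$ yields $\pi^{-1}(x)\cap \mathbb{B}(\mathscr{O}_{\mathbb{P}(E)}(q-sp)\otimes \pi^\star A^{-p})=\emptyset$, hence $x\notin \pi(\mathbb{B}_+(\mathscr{O}_{\mathbb{P}(E)}(1)))$.

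For the reverse inclusion I once more argue by contrapositive: if $\pi^{-1}(x)\cap \mathbb{B}_+(\mathscr{O}_{\mathbb{P}(E)}(1))=\emptyset$, I want to deduce $x\notin \mathbb{B}_+(E)$. Since $\pi^{-1}(x)\cong \mathbb{P}^{r-1}$ is proper, finitely many open neighborhoods cover the fiber on each of which some $\mathscr{O}_{\mathbb{P}(E)}(a_i)\otimes \pi^\star A^{-b_i}$ (with the ratios $a_i/b_i$ uniformly bounded) admits a non-vanishing section; passing to a common tensor power, and using the very ampleness of $\mathscr{O}_{\mathbb{P}(E)}(N)$ on the fiber for $N\gg 0$, one produces global sections of some $\mathscr{O}_{\mathbb{P}(E)}(N)\otimes \pi^\star A^{-M}$ whose restrictions to $\pi^{-1}(x)$ span $H^0(\pi^{-1}(x),\mathscr{O}_{\mathbb{P}(E)}(N))$. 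Pushing forward by $\pi_*$, this spanning translates into surjectivity of the evaluation map $H^0(X,S^N E\otimes A^{-M})\to (S^N E\otimes A^{-M})_x$, which is exactly the statement $x\notin \mathbb{B}_+(E)$.

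The hard part is precisely this last upgrade from "fiberwise non-vanishing of sections of line bundles on $\mathbb{P}(E)$" to "surjectivity of evaluation for a rank-$r$ bundle on $X$". The two notions of base locus are not literally the same: on $\mathbb{P}(E)$ one is testing simultaneous vanishing at a point, whereas on $X$ one is testing failure of surjectivity onto an $r$-dimensional fiber. Bridging the gap uses both the compactness of $\pi^{-1}(x)$ and the very ampleness of $\mathscr{O}_{\mathbb{P}(E)}(N)$ on fibers for $N\gg 0$, and one must balance the twist $q-sp$ so that the relevant line bundles restrict to positive-degree line bundles on fibers, ensuring the projection formula really does recover the desired symmetric powers.
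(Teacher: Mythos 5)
The paper states this proposition as a citation to \cite[Proposition 3.2]{BKKMSU15} and does not supply its own proof, so there is nothing internal to compare against; I will judge the proposal on its own merits.

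Your overall strategy is sound: both inclusions via contrapositive, using the projection formula $\pi_\star(\mathscr{O}_{\mathbb{P}(E)}(m)\otimes\pi^\star L)=S^m E\otimes L$ and a choice of ample $H=\mathscr{O}_{\mathbb{P}(E)}(s)\otimes\pi^\star A$ to compare the augmented base loci. Your argument for $\pi(\mathbb{B}_+(\mathscr{O}_{\mathbb{P}(E)}(1)))\subseteq\mathbb{B}_+(E)$ is essentially correct (modulo a small bookkeeping slip: from $x\notin\mathrm{Bs}(S^{mq}E\otimes A^{-mp})$ you obtain emptiness of $\pi^{-1}(x)\cap\mathbb{B}\bigl(\mathscr{O}_{\mathbb{P}(E)}(q)\otimes\pi^\star A^{-p}\bigr)$, which is $\mathbb{B}(\mathscr{O}(1)^{q+sp}\otimes H^{-p})$; the exponent should be $q$, not $q-sp$, but this is only notation).

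The reverse inclusion, however, contains a genuine gap that you yourself flag but do not close. You assert that from fiberwise non-vanishing of sections of some $\mathscr{O}_{\mathbb{P}(E)}(a)\otimes\pi^\star A^{-b}$ along $\pi^{-1}(x)$, plus ``very ampleness of $\mathscr{O}_{\mathbb{P}(E)}(N)$ on the fiber,'' one ``produces'' global sections of some $\mathscr{O}_{\mathbb{P}(E)}(N)\otimes\pi^\star A^{-M}$ whose restrictions \emph{span} $H^0(\pi^{-1}(x),\mathscr{O}(N))$. This is precisely the step that does not follow from what you have written. Very ampleness of $\mathscr{O}_{\mathbb{P}^{r-1}}(N)$ is automatic for $N\geq 1$ and does not help; the issue is that the subspace $V\subset H^0(\pi^{-1}(x),\mathscr{O}(a))$ cut out by restricting global sections of $\mathscr{O}(a)\otimes\pi^\star A^{-b}$ may be a proper subspace, and base-point-freeness of $V$ alone does not give surjectivity of the evaluation map for the pushforward. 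To bridge the gap one needs, for instance, the following: if $V\subset H^0(\mathbb{P}^{r-1},\mathscr{O}(a))$ is base-point-free, then $V\cdot H^0(\mathscr{O}(c))=H^0(\mathscr{O}(a+c))$ for $c\gg 0$ (proved from the short exact sequence $0\to K\to V\otimes\mathscr{O}(-a)\to\mathscr{O}\to 0$, twisting by $\mathscr{O}(a+c)$, and applying Serre vanishing to $K(a+c)$). One then multiplies these sections of $\mathscr{O}(c)$, lifted to global sections of $\mathscr{O}(c)\otimes\pi^\star A^{d}$ via $H^0(X,S^cE\otimes A^d)\twoheadrightarrow(S^cE\otimes A^d)_x$ for $d\gg 0$, against sections of $\mathscr{O}(a)\otimes\pi^\star A^{-b}$ (with $a,b$ replaced by high powers so that the resulting $A$-twist $d-b$ is still negative). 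Only with this auxiliary lemma and the careful balancing of $a,b,c,d$ does the contrapositive close; without it, ``fiberwise non-vanishing'' does not imply ``evaluation surjectivity,'' and the inclusion $\mathbb{B}_+(E)\subseteq\pi(\mathbb{B}_+(\mathscr{O}_{\mathbb{P}(E)}(1)))$ is not established.
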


\section{Demailly and Skoda's theorem in the singular setting}\label{sec:dsk}
In this section, we prove Theorem \ref{thm:demaillyskoda}, which is a generalization of Demailly and Skoda's result. 
Before proving that, we explain Demailly and Skoda's theorem. 
\begin{theorem}$($\cite{DS}$)$\label{thm:smoothds}
Let $h$ be a smooth Hermitian metric on $E$. If $(E, h)$ is Griffiths semi-positive, then $(E\otimes \det E, h\otimes \det h)$ is Nakano semi-positive. 
\end{theorem}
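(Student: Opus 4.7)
The plan is to reduce the statement to a purely algebraic inequality at each point of $X$ involving the curvature tensor. Fix $x_0\in X$, choose local coordinates $(z_1,\dots,z_n)$ around $x_0$ and a local frame $(e_1,\dots,e_r)$ of $E$ that is orthonormal at $x_0$, and write
$$
\sqrt{-1}\Theta_{(E,h)}(x_0)=\sum_{j,k,\lambda,\mu}c_{j\bar k\lambda\bar\mu}\,dz_j\wedge d\bar z_k\otimes e^\star_\lambda\otimes e_\mu.
$$
Using the canonical identification $\det h=\det(h_{\lambda\bar\mu})$ and differentiating $\log\det h$, the curvature of the determinant line bundle at $x_0$ is just the partial trace
$\sqrt{-1}\Theta_{(\det E,\det h)}(x_0)=\sum_{j,k,\nu}c_{j\bar k\nu\bar\nu}\,dz_j\wedge d\bar z_k.$
Then by the Leibniz rule for the Chern connection on a tensor product,
$$
\sqrt{-1}\Theta_{(E\otimes\det E,\,h\otimes\det h)}=\sqrt{-1}\Theta_{(E,h)}\otimes\mathrm{Id}_{\det E}+\mathrm{Id}_E\otimes\sqrt{-1}\Theta_{(\det E,\det h)}.
$$

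Next I would translate Nakano semi-positivity of $(E\otimes\det E,h\otimes\det h)$ into a Hermitian-form inequality on $T_{X,x_0}\otimes E_{x_0}$. Using the trivialization of $\det E$ at $x_0$ by $e_1\wedge\cdots\wedge e_r$ and writing an arbitrary tensor as $\tau=\sum_{j,\lambda}\tau_{j\lambda}\,\partial/\partial z_j\otimes e_\lambda$, the desired statement becomes
$$
\widetilde\Theta_{(E\otimes\det E,h\otimes\det h)}(\tau,\tau)=\sum_{j,k,\lambda,\mu}c_{j\bar k\lambda\bar\mu}\tau_{j\lambda}\bar\tau_{k\mu}+\sum_{j,k,\nu}c_{j\bar k\nu\bar\nu}\sum_{\lambda}\tau_{j\lambda}\bar\tau_{k\lambda}\;\geq\;0.
$$
Thus the theorem reduces to the following algebraic lemma: \emph{if the Hermitian form $(c_{j\bar k\lambda\bar\mu})$ is Griffiths semi-positive, i.e.\ non-negative on every decomposable tensor $\xi\otimes s$, then the above expression is non-negative on all $\tau$.}

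For the algebraic lemma I would follow Demailly--Skoda's diagonalization argument. One introduces the Hermitian endomorphism $\widehat H$ of $T_{X,x_0}\otimes E_{x_0}$ whose quadratic form is the left-hand side, and diagonalizes the partial trace $C_{j\bar k}:=\sum_\nu c_{j\bar k\nu\bar\nu}$ by a unitary change of the $T_X$-basis so that $C_{j\bar k}=\gamma_j\delta_{jk}$ with $\gamma_j\geq 0$ (Griffiths positivity forces the partial trace to be a positive semi-definite $(1,1)$-form on $T_X$). After simultaneously applying a second unitary change in the $E$-indices, the off-diagonal terms among $c_{j\bar k\lambda\bar\mu}$ can be bounded via Griffiths positivity applied to $\xi=\alpha e_j+\beta e_k$, $s=\gamma e_\lambda+\delta e_\mu$ for well-chosen complex scalars, which yields Cauchy--Schwarz type estimates $|c_{j\bar k\lambda\bar\mu}|^2\leq c_{j\bar j\lambda\bar\lambda}c_{k\bar k\mu\bar\mu}$ and similar. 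Combining these with the partial trace term produces a sum of squares.

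The main obstacle is this algebraic step: Griffiths semi-positivity only controls $(c_{j\bar k\lambda\bar\mu})$ on \emph{decomposable} tensors, whereas Nakano semi-positivity requires a bound on all tensors, and the gap is precisely what the extra trace contribution $\mathrm{Id}_E\otimes\Theta_{\det E}$ must fill. The delicate part is carrying out the diagonalization so that all cross-terms $(j\neq k$ or $\lambda\neq\mu)$ are dominated simultaneously by the diagonal contributions $c_{j\bar j\lambda\bar\lambda}$ and $\gamma_j=\sum_\nu c_{j\bar j\nu\bar\nu}$; the standard way to do this is to expand $\widetilde\Theta_{E\otimes\det E}(\tau,\tau)$ as $\sum_{j,\lambda}\sum_\nu c_{j\bar j\nu\bar\nu}|\tau_{j\lambda}-\tau_{j\nu}\delta_{\lambda\nu}|^2$ plus manifestly non-negative Griffiths contributions, after which the inequality is immediate.
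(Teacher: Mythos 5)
The paper does not actually give a proof of this theorem: it cites it directly from Demailly--Skoda and uses it as a black box in the proof of Theorem \ref{thm:demaillyskoda}. So there is no ``paper's own proof'' to compare against; I'll assess your argument on its own terms.

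Your setup is fine: the curvature of $E\otimes\det E$ splits as $\Theta_E\otimes\mathrm{Id}_{\det E}+\mathrm{Id}_E\otimes\mathrm{Tr}_E\Theta_E$, and the statement reduces to the algebraic claim that Griffiths semi-positivity of $(c_{j\bar k\lambda\bar\mu})$ forces Nakano semi-positivity of $c_{j\bar k\lambda\bar\mu}+\delta_{\lambda\mu}\sum_\nu c_{j\bar k\nu\bar\nu}$. The gap is in the algebraic step. The ``Cauchy--Schwarz type estimate'' $|c_{j\bar k\lambda\bar\mu}|^2\leq c_{j\bar j\lambda\bar\lambda}c_{k\bar k\mu\bar\mu}$ does \emph{not} follow from Griffiths semi-positivity when $j\neq k$ and $\lambda\neq\mu$: Griffiths positivity only controls the form on decomposable tensors, and the genuine Cauchy--Schwarz inequalities one can extract always involve sums over the internal indices (e.g.\ $|\sum_{\alpha,\beta}c_{j\bar k\alpha\bar\beta}s_\alpha\bar s_\beta|^2\leq(\sum c_{j\bar j\alpha\bar\beta}s_\alpha\bar s_\beta)(\sum c_{k\bar k\alpha\bar\beta}s_\alpha\bar s_\beta)$), not single entries. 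A concrete counterexample with $n=r=2$: take $c_{1\bar11\bar1}=c_{2\bar22\bar2}=c_{1\bar12\bar2}=c_{2\bar21\bar1}=1$, $c_{1\bar21\bar2}=\overline{c_{2\bar12\bar1}}=2$, all other entries zero. The Griffiths form equals $(|\xi_1|^2+|\xi_2|^2)(|s_1|^2+|s_2|^2)+2\Re(2\xi_1\bar\xi_2s_1\bar s_2)\geq 0$ by AM--GM, yet $|c_{1\bar21\bar2}|^2=4>1=c_{1\bar11\bar1}c_{2\bar22\bar2}$. (This tensor is also Nakano-negative, so it is exactly the kind of example that makes the theorem nontrivial.) Finally, your closing ``sum of squares'' expression $\sum_{j,\lambda}\sum_\nu c_{j\bar j\nu\bar\nu}|\tau_{j\lambda}-\tau_{j\nu}\delta_{\lambda\nu}|^2$ contains no cross-terms in $j\neq k$, so it cannot be the right decomposition.

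The actual Demailly--Skoda argument is an averaging argument, not a diagonalization-plus-Cauchy--Schwarz argument. For $u=(u_{j\lambda})$, set $\xi^{(a)}_j=\sum_\lambda u_{j\lambda}\bar a_\lambda$ for $a$ in the unit sphere $S^{2r-1}\subset\mathbb{C}^r$. Griffiths semi-positivity gives $\sum c_{j\bar k\lambda\bar\mu}\xi^{(a)}_j\overline{\xi^{(a)}_k}a_\lambda\bar a_\mu\geq 0$ for every $a$. Integrating over $S^{2r-1}$ and using the fourth-moment identity $\int\bar a_\alpha a_\beta a_\lambda\bar a_\mu\,d\sigma(a)=\tfrac{1}{r(r+1)}(\delta_{\alpha\beta}\delta_{\lambda\mu}+\delta_{\alpha\lambda}\delta_{\beta\mu})$ produces exactly $\tfrac{1}{r(r+1)}\widetilde\Theta_{(E\otimes\det E,h\otimes\det h)}(u,u)\geq 0$: one Kronecker term yields the Nakano form $\sum c_{j\bar k\lambda\bar\mu}u_{j\lambda}\bar u_{k\mu}$, the other yields $\sum(\sum_\nu c_{j\bar k\nu\bar\nu})u_{j\lambda}\bar u_{k\lambda}$. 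This is the mechanism by which the trace contribution compensates the possible failure of Nakano positivity, and it is purely global on the $E$-fibre; no simultaneous diagonalization is needed or possible.
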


Taking a smooth approximating sequence $\{ h_\nu \}_{\nu =1}^\infty$ of $h$, we give a proof of Theorem \ref{thm:demaillyskoda}. 
Our main approximation technique is based on the following proposition obtained by Berndtsson and Paun. 

\begin{proposition}$($cf. \cite[Proposition 3.1]{BP08}, \cite{Rau15}$)$\label{prop:bp}
Let $E$ be a trivial vector bundle over a polydisc $U$ and $h$ be a Griffiths semi-positive singular Hermitian metric on $E$. 
Then there exists a sequence of smooth Hermitian metrics $\{ h_\nu \}_{\nu =1}^\infty$, with positive Griffiths curvature, increasing to $h$ on smaller polydiscs. 
\end{proposition}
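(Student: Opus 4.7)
The plan is to reduce the problem to an approximation for the Griffiths semi-negative dual metric and then invert. Since $E$ is trivial over the polydisc $U$, I fix a global frame $(e_1,\dots,e_r)$ and identify $h$ with a matrix-valued function $H:U\to\mathrm{Herm}^+_r$; the dual metric $h^\star$ corresponds to $H^\star:=H^{-1}$. By Proposition \ref{prop:grif}, $h$ being Griffiths semi-positive is equivalent to the statement that for every holomorphic section $u:V\to\mathbb{C}^r$ of $E^\star$ over any open $V\subset U$, the function $\log\langle H^\star u,u\rangle$ is plurisubharmonic in $z$.

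The construction is entry-wise convolution. Fix a smooth, radial, non-increasing mollifier $\rho$ supported in the unit ball with $\int\rho=1$, and set $\rho_\varepsilon(w)=\varepsilon^{-2n}\rho(w/\varepsilon)$. On the shrunken polydisc $U_\varepsilon:=\{z\in U:\mathrm{dist}(z,\partial U)>\varepsilon\}$ define
\[
H^\star_\varepsilon(z):=\int_{\mathbb{C}^n}H^\star(z-w)\rho_\varepsilon(w)\,dV(w).
\]
The classical properties of convolution yield that $H^\star_\varepsilon$ is smooth and Hermitian, and Griffiths semi-negativity together with $\det h>0$ a.e.\ forces $H^\star_\varepsilon$ to be positive definite everywhere on $U_\varepsilon$. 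The two properties I then need to verify are (i) $H^\star_\varepsilon$ remains Griffiths semi-negative, and (ii) $H^\star_\varepsilon$ decreases monotonically to $H^\star$ in the positive-semidefinite order as $\varepsilon\to 0$.

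For (i), given any holomorphic $u$ on $U_\varepsilon$,
\[
\langle H^\star_\varepsilon(z)u(z),u(z)\rangle=\int\langle H^\star(z-w)u(z),u(z)\rangle\rho_\varepsilon(w)\,dV(w).
\]
For each fixed $w$, the translated section $\tilde u_w(\zeta):=u(\zeta+w)$ is holomorphic in $\zeta$, so by hypothesis $\zeta\mapsto\log\langle H^\star(\zeta)\tilde u_w(\zeta),\tilde u_w(\zeta)\rangle$ is psh, i.e.\ $z\mapsto\log\langle H^\star(z-w)u(z),u(z)\rangle$ is psh for each fixed $w$. I would then invoke the standard fact that if $F(z,w)\ge 0$ is measurable and $\log F(\cdot,w)$ is psh for a.e.\ $w$, then $\log\int F(z,w)\,d\mu(w)$ is psh in $z$ (the log-convexity-under-integration principle for families of log-psh functions), applied with $F(z,w)=\langle H^\star(z-w)u(z),u(z)\rangle$ and $d\mu=\rho_\varepsilon dV$. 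This gives $\log\langle H^\star_\varepsilon u,u\rangle$ psh, hence $H^\star_\varepsilon$ is Griffiths semi-negative. For (ii), test against an arbitrary constant vector $v\in\mathbb{C}^r$: the function $z\mapsto\langle H^\star(z)v,v\rangle$ is psh (apply the hypothesis with $u\equiv v$), and convolving a psh function with a radial non-increasing mollifier produces a smooth function that lies above it and decreases to it as $\varepsilon\to 0$ (a standard fact, e.g.\ from Demailly's book). Since $v$ was arbitrary, $H^\star_\varepsilon\searrow H^\star$ in the positive-semidefinite order.

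Inverting matrices reverses the order, so $H_\varepsilon:=(H^\star_\varepsilon)^{-1}$ is smooth, Griffiths semi-positive (Proposition \ref{prop:grif} again), and increases pointwise to $H$ on each $U_\varepsilon$. Choosing a sequence $\varepsilon_\nu\to 0$ adapted to an exhausting family of smaller polydiscs gives the claimed $\{h_\nu\}$. Should strict Griffiths positivity be required, I would finish by a small correction $h_\nu\mapsto h_\nu\,e^{-\delta_\nu|z|^2}\omega$-type twist with $\delta_\nu\searrow 0$ chosen, on each fixed smaller polydisc, less than the already-strict gap between $H_\varepsilon$ and the next smaller $H_{\varepsilon'}$, which preserves the monotone increase. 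The main obstacle in the whole argument is step (i): establishing that convolution preserves Griffiths semi-negativity. Once the log-psh-of-an-integral principle is admitted, the remaining order-theoretic and smoothness claims are routine; but that principle itself is the technical heart and must be cited or checked carefully from the standard references on log-plurisubharmonicity.
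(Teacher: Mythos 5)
Your proof is correct and follows essentially the same route the paper relies on: the paper gives no proof of its own but cites \cite[Proposition 3.1]{BP08} and \cite{Rau15} and remarks that the approximants come from convolution with an approximate identity, and your argument (convolve the Griffiths semi-negative dual metric with a radial mollifier, check that semi-negativity and monotonicity survive, then invert to get an increasing Griffiths semi-positive family) is precisely that standard argument. Two minor simplifications: the log-psh-of-an-integral principle you single out as the technical heart can be avoided altogether by using condition $(2)$ of Proposition \ref{prop:grif} (psh-ness of $|u|^2_{h^\star}$ is preserved under integration against $\rho_\varepsilon\,dV$, which is all you need), and in the optional strictness step any choice $\delta_\nu\searrow 0$ already preserves the monotone increase pointwise, so no ``strict gap'' between consecutive approximants (which need not exist) has to be invoked.
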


We remark that the above proposition is valid if $U$ is not a polydisc but a domain.
A sequence of smooth Hermitian metrics approximating $h$ is obtained through convolution of $h$ with an approximate identity. 
In this way, we can only get an approximating sequence when $E$ is a trivial vector bundle over a domain in $\mathbb{C}^n$.

To prove Theorem \ref{thm:demaillyskoda}, we also need the following theorem. 
\begin{theorem}$($\cite[Corollary 1]{Siu76}$)$\label{thm:siu}
Let $X$ be a Stein submanifold of $\mathbb{C}^N$ for some $N>n=\dim X$. 
Let $i:X\to \mathbb{C}^N$ be an inclusion map. 
Then there exists an open neighborhood $U$ of $X$ in $\mathbb{C}^N$ such that $U$ is a holomorphic retraction of $X$, 
i.e. there exists a holomorphic map $p:U\to X$ such that $p\circ i = id_X$. 
\end{theorem}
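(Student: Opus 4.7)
\emph{Proof proposal.} The plan is to build a holomorphic tubular neighborhood of $X$ in $\mathbb{C}^N$ and then read off the retraction from the bundle projection of the normal bundle. Let $N = N_{X/\mathbb{C}^N}$ be the holomorphic normal bundle of $X$ in $\mathbb{C}^N$. Since $X$ is Stein, the short exact sequence
$$
0 \to TX \to T\mathbb{C}^N|_X \to N \to 0
$$
of holomorphic vector bundles on $X$ splits holomorphically, because $H^1(X, \mathscr{H}om(N, TX)) = 0$ by Cartan's Theorem B. A choice of such splitting realizes $N$ as a holomorphic subbundle of the trivial bundle $T\mathbb{C}^N|_X = X \times \mathbb{C}^N$, so points of $N$ may be written as pairs $(x, v)$ with $x \in X$ and $v \in N_x \subset \mathbb{C}^N$.

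Next, I would define the holomorphic map $\Phi : N \to \mathbb{C}^N$ by $\Phi(x, v) = x + v$. Along the zero section, $T_{(x,0)} N$ decomposes as $T_x X \oplus N_x$, and $d\Phi_{(x,0)}$ acts as the identity on $T_x X$ and as the inclusion $N_x \hookrightarrow \mathbb{C}^N$; by construction of $N$ as a holomorphic complement of $TX$ inside $T\mathbb{C}^N|_X$, this is a $\mathbb{C}$-linear isomorphism $T_x X \oplus N_x \xrightarrow{\sim} \mathbb{C}^N$. The inverse function theorem therefore yields a local biholomorphism around each point of the zero section.

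The key step is to promote this to a biholomorphism on a full open neighborhood $\tilde U$ of the zero section $X \hookrightarrow N$. After fixing a smooth Hermitian metric $|\cdot|$ on $N$, I would look for $\tilde U$ of the form $\{(x, v) \in N : |v| < \rho(x)\}$ for a suitable positive continuous function $\rho$ on $X$. Exhausting $X$ by relatively compact open subsets $X_1 \Subset X_2 \Subset \cdots$ and applying the quantitative inverse function theorem on each $\overline{X_j}$ yields a uniform tube radius on each compact piece; these radii are glued into a single positive continuous $\rho$, possibly decreasing at infinity. Crucially, one uses that $X$ is closed in $\mathbb{C}^N$ (a Stein submanifold of $\mathbb{C}^N$ is necessarily properly embedded, by holomorphic convexity) so that distinct points $x_1 \neq x_2$ of $X$ that are far apart in the intrinsic topology cannot be close in $\mathbb{C}^N$; this rules out $\Phi(x_1, v_1) = \Phi(x_2, v_2)$ for small $v_i$ and thereby forces injectivity on $\tilde U$.

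Finally, set $U := \Phi(\tilde U) \subset \mathbb{C}^N$, which is open since $\Phi|_{\tilde U}$ is a biholomorphism onto its image, and define $p : U \to X$ by $p := \pi \circ (\Phi|_{\tilde U})^{-1}$, where $\pi : N \to X$ is the bundle projection. For $x \in X$ we have $\Phi(x, 0) = x$, hence $(\Phi|_{\tilde U})^{-1}(x) = (x, 0)$ and $p(i(x)) = \pi(x, 0) = x$, so $p \circ i = \mathrm{id}_X$ as required. The main obstacle is the global injectivity of $\Phi$ on a neighborhood of the zero section over the possibly non-compact Stein manifold $X$; the delicate point is the compatible choice of shrinking tube radius $\rho$, which uses the properness of the embedding in an essential way.
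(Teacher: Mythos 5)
The paper does not give its own proof of this statement; it is quoted directly from Siu's paper, where the result (in much greater generality, for Stein subvarieties of arbitrary complex spaces) is proved by first establishing the existence of Stein neighborhoods. Your approach --- a holomorphic tubular neighborhood built from the normal bundle, using Cartan B to split the normal sequence over the Stein manifold $X$, and reading off the retraction from the bundle projection --- is the standard elementary argument for the special case of an ambient $\mathbb{C}^N$, and it does give the stated conclusion. It is simpler than Siu's general machinery, which is the natural trade-off when the ambient is affine space.

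However, there is one step that is genuinely wrong as written. You assert that ``a Stein submanifold of $\mathbb{C}^N$ is necessarily properly embedded, by holomorphic convexity.'' This is false: Steinness is an \emph{intrinsic} property and does not constrain how a given embedding sits in $\mathbb{C}^N$. For instance, $\{(z,0):|z|<1\}\subset\mathbb{C}^2$ is a Stein submanifold of $\mathbb{C}^2$ that is not closed. What the Remmert embedding theorem says is that \emph{some} proper holomorphic embedding into a $\mathbb{C}^N$ exists; it does not say every holomorphic embedding is proper. Your injectivity argument for $\Phi$ genuinely uses that $X$ is closed in $\mathbb{C}^N$ (so that $X\cap K$ is compact for every compact $K\subset\mathbb{C}^N$, allowing the tube radius to be controlled on each shell), so this should be carried as an \emph{hypothesis}, not derived. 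This is how the theorem is actually used in the paper: the Stein coordinate $\Omega$ is first embedded \emph{properly} into $\mathbb{C}^N$ via Remmert, and only then is Siu's retraction invoked; Siu's Corollary 1 likewise concerns closed subvarieties. Once you replace the faulty ``by holomorphic convexity'' with the explicit assumption that $X$ is a closed (equivalently, properly embedded) submanifold --- which is what ``submanifold'' means here --- the rest of the argument, including the choice of a shrinking tube radius $\rho$ via a compact exhaustion of $X$, is sound, if somewhat loosely phrased at the globalization step.
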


Then we give a proof of the following result. 

\begin{theorem}$($= {\rm Theorem \ref{thm:demaillyskoda}}$)$\label{thm:newds}
Let $h$ be a singular Hermitian metric on $E$. If $(E, h)$ is Griffiths semi-positive, then $(E\otimes \det E, h\otimes \det h)$ is globally Nakano semi-positive in the sense of singular Hermitian metrics. 
\end{theorem}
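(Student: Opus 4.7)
\medskip

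\textbf{Proof proposal.} The strategy is a classical approximation argument: locally approximate the Griffiths semi-positive singular metric $h$ by a smooth Griffiths (semi-)positive sequence $h_\nu$, apply the smooth Demailly--Skoda theorem (Theorem \ref{thm:smoothds}) to conclude that each $h_\nu\otimes \det h_\nu$ is smoothly Nakano semi-positive, invoke Proposition \ref{prop:nakano} to produce $L^2$-solutions of $\dbar$ with estimates against $h_\nu\otimes \det h_\nu$, and then pass to the limit.

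First, fix an arbitrary Stein coordinate $(\Omega,\iota)$ around a point of $X$ with $\iota^\star E$ trivial on $\Omega$, a K\"ahler form $\omega_\Omega$ on $\Omega$, a smooth strictly plurisubharmonic weight $\psi$, and a $\dbar$-closed $f\in L^2_{(n,q)}(\Omega,\iota^\star(E\otimes \det E);\omega_\Omega,(\iota^\star(h\otimes \det h))e^{-\psi})$ with finite right-hand side. Because $\Omega$ is Stein I would embed it holomorphically into some $\mathbb{C}^N$ and use Siu's Theorem \ref{thm:siu} to obtain a holomorphic retraction $p:U\to \Omega$ from an open neighborhood $U$ of the image. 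Pulling the trivial bundle and the metric $\iota^\star h$ back via $p$ yields a Griffiths semi-positive singular Hermitian metric $\tilde h=p^\star(\iota^\star h)$ on a trivial bundle over the domain $U\subset \mathbb{C}^N$, so the (domain version of) Proposition \ref{prop:bp} of Berndtsson--P\u aun applies. This gives, on an exhaustion $\Omega_1\Subset \Omega_2\Subset \cdots \Subset \Omega$, smooth Griffiths positive Hermitian metrics $h_\nu$ on $\Omega_\nu$ with $h_\nu \uparrow \iota^\star h$ pointwise. By Theorem \ref{thm:smoothds} each $h_\nu\otimes \det h_\nu$ is smoothly Nakano semi-positive.

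Now, on each $\Omega_\nu$, Proposition \ref{prop:nakano} applied to the smooth Nakano semi-positive metric $h_\nu\otimes \det h_\nu$ produces $u_\nu\in L^2_{(n,q-1)}(\Omega_\nu,\iota^\star(E\otimes \det E);\omega_\Omega,(h_\nu\otimes\det h_\nu)e^{-\psi})$ with $\dbar u_\nu=f$ on $\Omega_\nu$ and
\[
\int_{\Omega_\nu}|u_\nu|^2_{(\omega_\Omega,h_\nu\otimes \det h_\nu)}e^{-\psi}\,dV_{\omega_\Omega}
\;\le\;
\int_{\Omega_\nu}\langle B^{-1}_{\omega_\Omega,\psi}f,f\rangle_{(\omega_\Omega,h_\nu\otimes \det h_\nu)}e^{-\psi}\,dV_{\omega_\Omega}.
\]
Since $B_{\omega_\Omega,\psi}$ is $(B')\otimes \mathrm{Id}$ for a form-valued operator $B'$ independent of the bundle metric, the pairing $\langle B^{-1}_{\omega_\Omega,\psi}f,f\rangle$ is monotone in the bundle metric. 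Because $h_\nu\otimes \det h_\nu\le h\otimes \det h$, the right-hand side above is dominated uniformly by the finite quantity $\int_\Omega \langle B^{-1}_{\omega_\Omega,\psi}f,f\rangle_{(\omega_\Omega,\iota^\star(h\otimes \det h))}e^{-\psi}\,dV_{\omega_\Omega}$.

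Finally, fix $\nu_0$ and note that $|u_\nu|^2_{h_{\nu_0}\otimes \det h_{\nu_0}}\le |u_\nu|^2_{h_\nu\otimes \det h_\nu}$ for $\nu\ge \nu_0$, so $\{u_\nu\}_{\nu\ge \nu_0}$ is bounded in the fixed Hilbert space $L^2(\Omega_{\nu_0},(h_{\nu_0}\otimes \det h_{\nu_0})e^{-\psi})$. Weak compactness and a standard diagonal argument extract a limit $u$ defined on all of $\Omega$ with $\dbar u=f$. Applying Fatou's lemma on the left and monotone convergence on the right (using $h_\nu\uparrow \iota^\star h$) gives
\[
\int_\Omega |u|^2_{(\omega_\Omega,\iota^\star(h\otimes \det h))}e^{-\psi}\,dV_{\omega_\Omega}
\;\le\;
\int_\Omega \langle B^{-1}_{\omega_\Omega,\psi}f,f\rangle_{(\omega_\Omega,\iota^\star(h\otimes \det h))}e^{-\psi}\,dV_{\omega_\Omega},
\]
which is exactly the condition in Definition \ref{def:nakanosemipositive}. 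The main obstacle I anticipate is the first step: producing a smooth Griffiths positive approximating sequence on an arbitrary Stein manifold $\Omega$ (not just a polydisc), since Berndtsson--P\u aun's convolution technique is local in $\mathbb{C}^N$; the Siu retraction is precisely the device that allows the approximation to be transplanted to $\Omega$, and verifying that $h_\nu \uparrow \iota^\star h$ on an exhaustion (rather than merely on a single smaller domain) is the crucial technical point.
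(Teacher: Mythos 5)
Your limiting scheme (monotone smooth approximation, solving $\dbar$ with the smooth estimate, weak compactness plus a diagonal argument, then monotone convergence) is exactly the paper's, and that part is sound. The genuine gap is at the very first step: you fix a Stein coordinate $(\Omega,\iota)$ \emph{with $\iota^\star E$ trivial}, but the statement to be proved is global Nakano semi-positivity of the pair $(E\otimes\det E,\,h\otimes\det h)$, and Definition \ref{def:nakanosemipositive} applied to this bundle quantifies over all Stein coordinates on which $E\otimes\det E$ is trivial. On such a coordinate $E$ itself need not be trivial, so Proposition \ref{prop:bp} (even after the Siu retraction, which only transports the problem to a domain in $\mathbb{C}^N$ but does not trivialize a nontrivial bundle) cannot be applied to $\iota^\star h$ on $E$, and your approximating sequence $h_\nu\uparrow\iota^\star h$ with $h_\nu\otimes\det h_\nu$ Nakano semi-positive is simply not available. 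What you have verified is the condition only for the smaller class of coordinates trivializing $E$, which is weaker than the theorem's conclusion.

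The paper's proof contains precisely the device you are missing: it works directly with the trivial bundle $E\otimes\det E$ on $\Omega$, fixes a global frame $(e_1,\dots,e_r)$ of $E\otimes\det E$, defines the plurisubharmonic function $\Psi$ by $|e_1\wedge\cdots\wedge e_r|_{(\det h)^{\otimes(r+1)}}=e^{-\Psi}$, and shows (Proposition \ref{prop:detail}) that the twisted metric $h\otimes\det h\,e^{\Psi/(r+1)}$ is Griffiths semi-positive on $E\otimes\det E$. Proposition \ref{prop:bp} is then applied, after the Siu retraction, to \emph{this} metric on the trivial bundle $E\otimes\det E$, producing smooth Griffiths positive $g_\nu$; the smooth Demailly--Skoda theorem applied to $g_\nu$ gives Nakano semi-positive metrics $g_\nu\otimes\det g_\nu$, and the point of the twist is that, via the trivialization of $\det(E\otimes\det E)\cong(\det E)^{\otimes(r+1)}$, the extra determinant factor contributes exactly $e^{-\Psi/(r+1)}$, so $g_\nu\otimes\det g_\nu$ increases to $h\otimes\det h$ itself rather than to $h\otimes\det h$ tensored with a spurious power of $\det h$. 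With that replacement your subsequent $L^2$ limiting argument goes through verbatim; without it (or some substitute for approximating on coordinates where only $E\otimes\det E$ is trivial), the proof does not establish the theorem as stated.
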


\begin{proof}
It is clear that Griffiths semi-positivity of $h$ yields the Griffiths semi-positivity of $h \otimes \det h$ (cf. \cite[Proposition 1.3]{Rau15}).
Then 
it is enough to show that $(E\otimes \det E, h\otimes \det h)$ satisfies the condition in Definition \ref{def:nakanosemipositive}. 

Let $(\Omega, \iota)$ be an arbitrary Stein coordinate of $X$ such that $(E\otimes \det E)|_{\iota(\Omega)}$ is trivial on $\iota(\Omega)$. 
Since $\Omega$ can be properly embedded into $\mathbb{C}^N$ for some large $N$, we can regard $\Omega$ as a submanifold of $\mathbb{C}^N$ without any loss of generality. 
From Theorem \ref{thm:siu}, we take an open neighborhood $U$ of $\Omega$ in $\mathbb{C}^N$ and a holomorphic map $p : U\to \Omega$ which defines a holomorphic retraction of $\Omega$, i.e. $p\circ i=id_\Omega$, where $i: \Omega \to \mathbb{C}^N$ is an inclusion map. 
Since $(E\otimes \det E)|_{\iota(\Omega)}$ is a trivial bundle, $\iota^\star(E\otimes \det E)$ and $p^\star \iota^\star(E\otimes \det E)$ are also trivial on $\Omega$ and $U$. 
Thanks to \cite[Lemma 2.3.2]{PT18}, $\iota^\star h$ and $p^\star \iota^\star h$ are also Griffiths semi-positive. 
For the sake of clarity, we omit the map $\iota$ and simply write $(E, h)(=(\iota^\star E, \iota^\star h))$ on $\Omega$.

Since $E\otimes \det E$ is trivial on $\Omega$, we fix a holomorphic global frame $(e_1, \cdots , e_r)$ of $E\otimes \det E$ on $\Omega$. 
Then $(\det (E\otimes \det E), \det (h\otimes \det h)) \cong ((\det E)^{\otimes r+1}, (\det h)^{\otimes r+1})$ is also trivial on $\Omega$ with respect to the frame $e_1\wedge \cdots \wedge e_r$. 
We define the function $\Psi$ by 
$$
|e_1 \wedge \cdots \wedge e_r|_{(\det h)^{\otimes r+1}}=e^{-\Psi}.
$$
Since $(\det h)^{\otimes r+1}$ is Griffiths semi-positive (cf. \cite[Proposition 1.3]{Rau15}), $\Psi$ is a plurisubharmonic function on $\Omega$. 
We construct the metric $h\otimes \det h~ e^{\frac{\Psi}{r+1}}$ on $E\otimes \det E$. 
We can easily see that $h\otimes \det h~ e^{\frac{\Psi}{r+1}}$ is Griffiths semi-positive (for the detailed proof, see Proposition \ref{prop:detail} below).
From Proposition \ref{prop:bp}, we get a sequence of smooth Hermitian metrics $\{ h_\nu \}^\infty_{\nu=1}$, with positive Griffiths curvature, increasing to $p^\star(h\otimes \det h~ e^{\frac{\Psi}{r+1}})$ on $p^\star (E\otimes \det E)$ over any relatively compact subdomain of $U$. 
Set $g_\nu := i^\star h_\nu$. 
Since $p\circ i =id_\Omega$, $\{ g_\nu \}_{\nu=1}^\infty$ is also a sequence of smooth Hermitian metrics, with positive Griffiths curvature, increasing to $h\otimes \det h~ e^{\frac{\Psi}{r+1}}$ on $E\otimes \det E$ over any relatively compact subset of $\Omega$. 
We also have that $\{ \det g_\nu \}_{\nu=1}^\infty$ becomes a sequence of smooth Hermitian metrics, with positive curvature, increasing to 
\begin{align*}
(\det (E\otimes \det E), \det (h\otimes \det h~ e^{\frac{\Psi}{r+1}}))&=((\det E)^{\otimes r+1}, (\det h)^{\otimes r+1}e^{\frac{r\Psi}{r+1}})\\
&\cong (\mathbb{C}, e^{-\frac{\Psi}{r+1}})
\end{align*}
(cf. \cite[the proof of Proposition 1.3]{Rau15}). 
Then, from the result of Demailly-Skoda (Theorem \ref{thm:smoothds}), $\{ g_\nu \otimes \det g_\nu \}_{\nu=1}^\infty$ gives a sequence of smooth Hermitian metrics, with positive Nakano curvature, increasing to $h\otimes \det h$ on $E\otimes \det E$ over any relatively compact subset of $\Omega$.
Here we regard $g_\nu \otimes \det g_\nu $ as the metric on $E\otimes \det E$ via the trivialization of $(\det E)^{\otimes r+1}$ for every $\nu \in \mathbb{N}$. 

Then we take an arbitrary K\"ahler metric $\omega_\Omega$, an arbitrary smooth strictly plurisubharmonic function $\psi$, and an arbitrary $\dbar$-closed $f\in L^2_{(n, q)}(\Omega, E\otimes \det E ; \omega_\Omega, h\otimes \det h e^{-\psi})$ for any $q>0$ on $\Omega$. 
We also take a Stein exhaustion $\{ \Omega_j \}_{j=1}^\infty$ of $\Omega$, where $\Omega_j$ is a relatively compact Stein subdomain. 
We assume that 
$$
\int_\Omega \langle B^{-1}_{\omega_\Omega, \psi}f, f\rangle_{(\omega_\Omega, h\otimes \det h)}e^{-\psi} dV_{\omega_\Omega} < +\infty. 
$$
Since $\{ g_\nu \otimes \det g_\nu \}_{\nu=1}^\infty$ is an increasing sequence on any relatively compact subset, 
we have 
$$
\int_{\Omega_j} \langle B^{-1}_{\omega_\Omega, \psi}f, f\rangle_{(\omega_\Omega, g_\nu \otimes \det g_\nu)}e^{-\psi} dV_{\omega_\Omega} < +\infty 
$$
for fixed $j \in \mathbb{N}$. 
Thanks to H\"ormander's $L^2$-estimate for smooth Hermitian metrics (cf. Theorem \ref{thm:demailly}) and the proof of Proposition \ref{prop:nakano}, we get a solution $u_\nu \in L^2_{(n, q-1)}(\Omega_j, E\otimes \det E ; \omega_\Omega, g_\nu \otimes \det g_\nu e^{-\psi})$ of $\dbar u_\nu =g$ such that 
\begin{align*}
\int_{\Omega_j} |u_\nu|^2_{(\omega_\Omega, g_\nu\otimes \det g_\nu)}e^{-\psi}dV_{\omega_\Omega} & \leq \int_{\Omega_j} \langle A^{-1}_{q, \omega_\Omega, g_\nu\otimes \det g_\nu e^{-\psi}}f, f \rangle_{(\omega_\Omega, g_\nu\otimes \det g_\nu)} e^{-\psi} dV_{\omega_\Omega}\\
&\leq \int_{\Omega_j} \langle B^{-1}_{\omega_\Omega, \psi}f, f\rangle_{(\omega_\Omega, g_\nu\otimes \det g_\nu)} e^{-\psi} dV_{\omega_\Omega} \\
&\leq \int_{\Omega} \langle B^{-1}_{\omega_\Omega, \psi}f, f\rangle_{(\omega_\Omega, h\otimes \det h)}e^{-\psi} dV_{\omega_\Omega} < +\infty 
\end{align*}
since $g_\nu \otimes \det g_\nu$ is Nakano semi-positive. 
For fixed $\nu_0 $,  $\{ u_\nu \}_{\nu\geq \nu_0}$ forms a bounded sequence in $L^2_{(n, q-1)}(\Omega_j, E\otimes \det E ; \omega_\Omega, g_{\nu_0}\otimes \det g_{\nu_0}e^{-\psi})$
due to the monotonicity of $\{ g_\nu \otimes \det g_\nu \}_{\nu=1}^\infty$. 
Hence, we can obtain a weakly convergent subsequence in $L^2_{(n, q-1)}(\Omega_j, E\otimes \det E ; \omega_\Omega , g_{\nu_0}\otimes \det g_{\nu_0}e^{-\psi})$. 
By using a diagonal argument, we get a subsequence $\{ u_{\nu_k}\}_{k=1}^\infty$ of $\{ u_\nu \}_{\nu=1}^\infty$ converging weakly in $L^2_{(n, q-1)}(\Omega_j, E\otimes \det E ; \omega_\Omega , g_{\nu_0}\otimes \det g_{\nu_0}e^{-\psi})$ for any $\nu_0$. 
We denote by $u_j$ the weak limit of $\{ u_{\nu_k}\}_{k=1}^\infty$. 
Then $u_j$ satisfies $\dbar u_j= f$ on $\Omega_j$ and 
$$
\int_{\Omega_j} |u_j|^2_{(\omega_\Omega, g_{\nu_0}\otimes \det g_{\nu_0})}e^{-\psi}dV_{\omega_\Omega}  \leq \int_{\Omega} \langle B^{-1}_{\omega_\Omega, \psi}f, f\rangle_{(\omega_\Omega, h\otimes \det h)}e^{-\psi} dV_{\omega_\Omega}
$$
for each $\nu_0$. 
Taking weak limits $\nu_0 \to + \infty$ and using the monotone convergence theorem, we have the following estimate
$$
\int_{\Omega_j} |u_j|^2_{(\omega_\Omega, h\otimes \det h)}e^{-\psi}dV_{\omega_\Omega}  \leq \int_{\Omega} \langle B^{-1}_{\omega_\Omega, \psi}f, f\rangle_{(\omega_\Omega, h\otimes \det h)}e^{-\psi} dV_{\omega_\Omega}.
$$
Repeating the above argument and taking the weak limit $j\to \infty$, we get a solution $u\in L^2_{(n, q-1)}(\Omega, E\otimes \det E ; \omega_\Omega, h\otimes \det he^{-\psi})$ of $\dbar u =f$ such that 
$$
\int_{\Omega} |u|^2_{(\omega_\Omega, h\otimes \det h)}e^{-\psi}dV_{\omega_\Omega}  \leq \int_{\Omega} \langle B^{-1}_{\omega_\Omega, \psi}f, f\rangle_{(\omega_\Omega, h\otimes \det h)}e^{-\psi} dV_{\omega_\Omega}
$$
on $\Omega$. Consequently, we can conclude that $h\otimes \det h$ is Nakano semi-positive in the sense of singular Hermitian metrics. 
\end{proof}

\begin{proposition}\label{prop:detail}
Let notation be the same as one in the proof of Theorem \ref{thm:newds}. 
Then the metric $h\otimes \det h~ e^{\frac{\Psi}{r+1}}$ is Griffiths semi-positive on $E\otimes \det E$. 
\end{proposition}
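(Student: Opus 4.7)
My plan is to exploit the factorization
\[
(h\otimes \det h)\,e^{\Psi/(r+1)} \;=\; h\otimes \bigl(\det h \cdot e^{\Psi/(r+1)}\bigr).
\]
Since $h$ is Griffiths semi-positive by hypothesis and the tensor product of two Griffiths semi-positive singular Hermitian metrics is again Griffiths semi-positive (cf.\ \cite[Proposition 1.3]{Rau15}), it will suffice to show that the line-bundle metric $\det h \cdot e^{\Psi/(r+1)}$ on $\det E$ is Griffiths semi-positive. Griffiths semi-positivity on a line bundle is local in nature --- it is just plurisubharmonicity of the local weight --- so I may verify it on any sufficiently small open $V\subset \Omega$ on which $\det E$ is holomorphically trivial.

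On such a $V$, fix a frame $\sigma$ of $\det E$ and denote by $\phi_0$ the local weight of $\det h$ in $\sigma$, so $|\sigma|_{\det h}=e^{-\phi_0}$. Because $e_1\wedge\cdots\wedge e_r$ is a nowhere-vanishing global section of $(\det E)^{\otimes r+1}$ on $\Omega$ while $\sigma^{\otimes r+1}$ is another frame on $V$, there exists a nowhere-vanishing holomorphic function $g$ on $V$ with $e_1\wedge\cdots\wedge e_r=g\,\sigma^{\otimes r+1}$. Comparing norms via the defining identity $|e_1\wedge\cdots\wedge e_r|_{(\det h)^{\otimes r+1}}=e^{-\Psi}$ yields $|g|^2 e^{-(r+1)\phi_0}=e^{-\Psi}$, whence $\phi_0=(\Psi+\log|g|^2)/(r+1)$. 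Consequently, the local weight of $\det h \cdot e^{\Psi/(r+1)}$ in the frame $\sigma$ equals
\[
\phi_0 - \tfrac{\Psi}{r+1} \;=\; \tfrac{\log|g|^2}{r+1},
\]
which is pluriharmonic (and hence plurisubharmonic) because $g$ is a nowhere vanishing holomorphic function. Thus $\det h\cdot e^{\Psi/(r+1)}$ is Griffiths semi-positive on $V$, and since this holds in a neighborhood of every point of $\Omega$, it is Griffiths semi-positive on all of $\Omega$; together with the factorization above this proves the proposition.

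The main subtlety I anticipate is that $\det E$ itself need not be trivial on $\Omega$ --- only its $(r+1)$-st power is globally trivialized by $e_1\wedge\cdots\wedge e_r$. Passing to a local trivialization of $\det E$ introduces the transition factor $g$, but since $g$ is holomorphic and nowhere vanishing, $\log|g|^2$ is pluriharmonic and drops out of the plurisubharmonicity condition, so the whole positivity question is absorbed into the single plurisubharmonic function $\Psi$. Apart from this bookkeeping the proof consists of standard identities for induced metrics on tensor products and determinant line bundles, together with the already-cited preservation of Griffiths semi-positivity under tensor products.
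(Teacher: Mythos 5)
Your argument is correct, and it rests on the same identity as the paper's proof, packaged through a different decomposition. The paper works on the dual side: for a local holomorphic section $u$ of $E^\star\otimes\det E^\star$ it shows that $\log|u|$ with respect to the dual of $h\otimes\det h\,e^{\frac{\Psi}{r+1}}$ is plurisubharmonic, splitting it as $\log|u|_{h^\star}$ (psh since $h^\star$ is Griffiths semi-negative) plus a term which, after comparing the global frame $e_1^\star\wedge\cdots\wedge e_r^\star$ with the $(r+1)$-st power of a local frame of $\det E^\star$, equals $\log|f|$ for a nowhere-vanishing holomorphic function $f$, hence is pluriharmonic. You instead factor the metric as $h\otimes\bigl(\det h\,e^{\frac{\Psi}{r+1}}\bigr)$, prove that the line-bundle factor has pluriharmonic local weight (the same frame comparison, carried out on the primal side with $e_1\wedge\cdots\wedge e_r=g\,\sigma^{\otimes r+1}$), and then invoke stability of Griffiths semi-positivity under tensoring with a semi-positively curved line bundle. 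What your route buys is a cleaner modularization: the whole content of the twist $e^{\frac{\Psi}{r+1}}$ is isolated in a statement about $\det E$, and the vector-bundle part never has to be touched.

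Two blemishes, both harmless. First, with your normalization $|\sigma|_{\det h}=e^{-\phi_0}$ the comparison of norms gives $|g|\,e^{-(r+1)\phi_0}=e^{-\Psi}$, not $|g|^2e^{-(r+1)\phi_0}$; this only rescales the pluriharmonic term, and even if one tracks the norm versus norm-squared conventions differently (the paper itself is loose about this), the weight of $\det h\,e^{\frac{\Psi}{r+1}}$ comes out as a nonnegative multiple of the psh weight of $\det h$ plus a pluriharmonic function, so Griffiths semi-positivity is unaffected. Second, \cite[Proposition 1.3]{Rau15} concerns $\det h$ rather than tensor products; but the tensor statement you actually need --- a Griffiths semi-positive $h$ tensored with a semi-positively curved line-bundle metric --- is immediate from the dual characterization, since locally every holomorphic section of $E^\star\otimes L^\star$ has the form $v\otimes t$ with $t$ a local frame of $L^\star$, so the relevant $\log$-norm is a sum of psh functions.
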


\begin{proof}
We have to show that $\log |u|_{h^\star \otimes \det h^\star e^{-\frac{\Psi}{r+1}}}$ is plurisubharmonic for any local holomorphic section $u\in \mathscr{O}(E^\star \otimes \det E^\star)$ of $E^\star \otimes \det E^\star$.
Let $(e^\star_1, \cdots , e^\star_r)$ be the global dual frame of $(e_1, \cdots , e_r)$. 
We also take a local frame of $(\epsilon_1, \cdots, \epsilon_r)$ of $E$ and let $(\epsilon^\star_1, \cdots, \epsilon^\star_r)$ the local dual frame.
Fixing these frames, it is enough to show that 
$$
\log (|u|_{h^\star}|\epsilon^\star_1 \wedge \cdots \wedge \epsilon^\star_r|_{\det h^\star}e^{-\frac{\Psi}{r+1}})= \log |u|_{h^\star} + \log |\epsilon^\star_1 \wedge \cdots \wedge \epsilon^\star_r|_{\det h^\star}|e_1\wedge \cdots \wedge e_r|^{\frac{1}{r+1}}_{(\det h)^{\otimes r+1}}
$$
is plurisubharmonic. Since $h^\star$ is Griffiths semi-negative, $\log |u|_{h^\star}$ is a plurisubharmonic function. 
We define a local holomorphic function $f$ by $f(\epsilon^\star_1 \wedge \cdots \wedge \epsilon^\star_r)^{\otimes r+1}=e^\star_1\wedge \cdots \wedge e^\star_r$. 
Then we obtain 
\begin{align*}
(r+1)\log |\epsilon^\star_1 \wedge \cdots \wedge \epsilon^\star_r|_{\det h^\star}|e_1\wedge \cdots \wedge e_r|^{\frac{1}{r+1}}_{(\det h)^{\otimes r+1}}&=\log |\epsilon^\star_1 \wedge \cdots \wedge \epsilon^\star_r|^{r+1}_{\det h^\star}|e_1\wedge \cdots \wedge e_r|_{(\det h)^{\otimes r+1}}\\
&=\log \left( \frac{|(\epsilon^\star_1 \wedge \cdots \wedge \epsilon^\star_r)^{r+1}|_{(\det h^\star)^{\otimes r+1}}}{|e^\star_1\wedge \cdots \wedge e^\star_r|_{(\det h^\star)^{\otimes r+1}}}\right)\\
&= \log |f|.
\end{align*}
Since $f\neq 0$, this term is a harmonic function. Therefore, we complete the proof. 
\end{proof}

If $X$ admits a K\"ahler metric $\omega_X$, we can also prove the following theorem. 

\begin{theorem}\label{thm:dsstrict}
Let $\omega_X$ be a K\"ahler form on a K\"ahler manifold $X$. 
If $(E, h)$ is strictly Griffiths $\delta_{\omega_X}$-positive, then $(E\otimes \det E, h\otimes \det h)$ is strictly Nakano $(r+1)\delta_{\omega_X}$-positive.
\end{theorem}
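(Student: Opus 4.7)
The plan is to reduce the strict statement directly to Theorem \ref{thm:demaillyskoda} (the singular Demailly–Skoda result) by unwinding the definitions and performing one algebraic manipulation involving the determinant.

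First I would unfold Definition \ref{def:strictnakano} applied to $(E\otimes \det E, h\otimes \det h)$ with constant $(r+1)\delta$. So fix an arbitrary open subset $U\subset X$ and an arbitrary K\"ahler potential $\varphi$ of $\omega_X$ on $U$, i.e.\ $\ai\ddbar\varphi=\omega_X$; the goal becomes showing that the twisted metric
$$
(h\otimes \det h)\,e^{(r+1)\delta\varphi}
$$
is globally Nakano semi-positive on $U$ in the sense of Definition \ref{def:nakanosemipositive}.

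Next I would perform the key algebraic identity. Since $E$ has rank $r$, we have $\det(he^{\delta\varphi})=(\det h)\,e^{r\delta\varphi}$, and therefore
\begin{equation*}
(h\otimes \det h)\,e^{(r+1)\delta\varphi}
= \bigl(he^{\delta\varphi}\bigr)\otimes \bigl((\det h)\,e^{r\delta\varphi}\bigr)
= \bigl(he^{\delta\varphi}\bigr)\otimes \det\bigl(he^{\delta\varphi}\bigr).
\end{equation*}
This rewrites the twisted metric in exactly the form to which Theorem \ref{thm:demaillyskoda} applies.

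Finally I would invoke the hypothesis and the singular Demailly–Skoda theorem. By the assumption of strict Griffiths $\delta_{\omega_X}$-positivity of $h$ (Definition \ref{def:strictgrif}), the metric $he^{\delta\varphi}$ is a Griffiths semi-positive singular Hermitian metric on $E|_U$. Applying Theorem \ref{thm:demaillyskoda} to the Griffiths semi-positive pair $(E|_U, he^{\delta\varphi})$ yields that
$$
\bigl(he^{\delta\varphi}\bigr)\otimes \det\bigl(he^{\delta\varphi}\bigr)
$$
is globally Nakano semi-positive on $U$ in the sense of Definition \ref{def:nakanosemipositive}. By the identity above, this is exactly the global Nakano semi-positivity of $(h\otimes \det h)e^{(r+1)\delta\varphi}$, and since $U$ and $\varphi$ were arbitrary, we obtain the strict Nakano $(r+1)\delta_{\omega_X}$-positivity of $(E\otimes \det E, h\otimes \det h)$.

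Essentially there is no obstacle beyond noticing the determinant identity; the real work is already hidden in Theorem \ref{thm:demaillyskoda}, whose proof handled the delicate approximation, trivialization via Stein coordinates, and the passage to the singular limit. The only point that one might worry about is whether Griffiths semi-positivity is preserved under multiplying by the positive weight $e^{\delta\varphi}$, but this is immediate from the characterization via plurisubharmonicity in Proposition \ref{prop:grif}: if $|u|^2_{h^\star}$ is plurisubharmonic for holomorphic sections of $E^\star$, so is $|u|^2_{h^\star}e^{-\delta\varphi}$, since $-\delta\varphi$ is plurisubharmonic (as $\varphi$ is a K\"ahler potential, hence strictly plurisubharmonic). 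Thus the reduction is clean and the proof is complete.
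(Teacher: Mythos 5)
Your proof is correct and is essentially the same as the paper's: unfold Definition \ref{def:strictnakano}, use the identity $(h\otimes\det h)e^{(r+1)\delta\varphi}=(he^{\delta\varphi})\otimes\det(he^{\delta\varphi})$, and apply Theorem \ref{thm:demaillyskoda} to the Griffiths semi-positive metric $he^{\delta\varphi}$. One small remark: your closing worry is moot, since the Griffiths semi-positivity of $he^{\delta\varphi}$ is not something you need to re-derive --- it is precisely what Definition \ref{def:strictgrif} asserts.
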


\begin{proof}
We take an arbitrary open subset $U$ and any K\"ahler potential $\varphi$ of $\omega_X$ on $U$. 
We also take a Stein coordinate $(\Omega, \iota)$ of $U$. 
Then we use the same notation as in the proof of Theorem \ref{thm:newds}. 
By the definition of the strict Griffiths $\delta_{\omega_X}$-positivity, we have that $he^{\delta \varphi}$ is Griffiths semi-positive.
Hence, from Theorem \ref{thm:demaillyskoda}, we get 
$$
he^{\delta \varphi}\otimes \det (he^{\delta \varphi})=h\otimes \det h e^{(r+1)\delta \varphi}
$$
is globally Nakano semi-positive in the sense of singular Hermitian metrics on $U$.
Thus we can conclude that $h\otimes \det h$ is strictly Nakano $(r+1)\delta_{\omega_X}$-positive on $X$.  
\end{proof}

\section{$L^2$-estimates and vanishing theorems}\label{sec:vanishing}
In this section, we give an $L^2$-estimate and a vanishing theorem for holomorphic vector bundles with strictly Nakano positive singular Hermitian metrics. 
Then we prove Theorem \ref{thm:l2}, \ref{thm:vanishing}, and \ref{cor:grif}. 
In this section, we assume that $X$ is a projective manifold and $\omega_X$ is a K\"ahler form on $X$. 
First of all, we show Theorem \ref{thm:l2}.

\begin{proof}[\indent\sc Proof of Theorem \ref{thm:l2}]
Choose an arbitrary $\dbar$-closed $f\in L^2_{(n, q)}(X, E ; \omega_X, h)$ for $q>0$.
By Serre's GAGA \cite{Ser56}, there exists a proper Zariski open subset $Z\neq \emptyset$ such that $E|_Z$ is trivial over $Z$.  
We can also take $Z$ such that $Z$ is Stein and $\omega_X$ is $\ddbar$-exact on $Z$. 
Then $(Z, i)$ is a Stein coordinate of $X$ such that $E|_Z$ is trivial on $Z$, where $i:Z\to X$ is the natural inclusion map. 
We fix a K\"ahler potential $\varphi$ of $\omega_X$ on $Z$, i.e. $\varphi$ satisfies $\ai \ddbar \varphi =\omega_X$. 
Then we have that 
\begin{align*}
\langle [B_{\omega_X, \delta\varphi}, \Lambda_{\omega_X}]f, f\rangle_{(\omega_X, h)} &= \delta q |f|^2_{(\omega_X, h)},\\
\langle [B^{-1}_{\omega_X, \delta\varphi}, \Lambda_{\omega_X}]f, f\rangle_{(\omega_X, h)} &= \frac{1}{\delta q} |f|^2_{(\omega_X, h)},
\end{align*}
respectively. 

Thanks to the definition of the strict Nakano $\delta_{\omega_X}$-positivity, for any smooth strictly plurisubharmonic function $\psi$ on $Z$, 
we can obtain $u\in L^2_{(n, q-1)}(Z, E ; \omega_X, he^{\delta\varphi -\psi})$
satisfying $\dbar u =f$ and 
$$
\int_Z |u|^2_{(\omega_X, h)}e^{\delta\varphi -\psi}dV_{\omega_X} \leq \int_Z \langle B^{-1}_{\omega_X, \psi}f, f \rangle_{(\omega_X, h)}e^{\delta\varphi -\psi}dV_{\omega_X}
$$
if the right-hand side is finite. Taking $\psi =\delta \varphi$, we get a solution $u\in L^2_{(n, q-1)}(Z, E ; \omega_X, h)$ of $\dbar u =f $ such that 
\begin{align*}
\int_Z |u|^2_{(\omega_X, h)}dV_{\omega_X} &\leq \int_Z \langle B^{-1}_{\omega_X, \delta\varphi}f, f \rangle_{(\omega_X, h)}dV_{\omega_X}\\
& = \frac{1}{\delta q} \int_Z |f|^2_{(\omega_X, h)}dV_{\omega_X}\\
& \leq \frac{1}{\delta q} \int_X |f|^2_{(\omega_X, h)}dV_{\omega_X}< +\infty. 
\end{align*}
Letting $u=0$ on $X\setminus Z$, we have $u\in L^2_{(n, q-1)}(X, E ; \omega_X, h)$, $\dbar u=f$, and 
$$
\int_X |u|^2_{(\omega_X, h)}dV_{\omega_X}  \leq \frac{1}{\delta q} \int_X |f|^2_{(\omega_X, h)}dV_{\omega_X}
$$
from the following lemma. 

\end{proof}

\begin{lemma}$($cf. \cite[Lemma 5.1.3]{Ber10}$)$\label{lem:things}
Let $X$ be a complex manifold and let $S$ be a complex hypersurface in $X$.
Let $u$ and $f$ be (possibly bundle valued) forms in $L^2_{loc}$ of $X$ satisfying $\dbar u =f$ on $X\setminus S$. 
Then the same equation holds on $X$ (in the sense of distributions). 
\end{lemma}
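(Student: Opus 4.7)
The proof will be local: since $\dbar$ is a local differential operator, it suffices to verify the distributional identity near each point $p\in X$. For $p\notin S$ there is nothing to prove; for $p\in S$ I choose a small open neighborhood $V\ni p$ together with a nonzero holomorphic function $g\in\mathscr{O}(V)$ such that $S\cap V=\{g=0\}$ and a local trivialization of the bundle. The plan is to approximate any test form supported in $V$ by test forms supported in $V\setminus S$, using a family of cut-offs that vanish near $S$ and tend to $1$ off $S$, then pass to the limit.

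Concretely, fix a smooth $\eta:[0,\infty)\to[0,1]$ with $\eta\equiv 0$ on $[0,1]$ and $\eta\equiv 1$ on $[2,\infty)$, and set $\chi_\epsilon(z):=\eta(|g(z)|^2/\epsilon^2)$. For a test form $\phi$ with compact support in $V$ of bidegree complementary to $u$, the form $\chi_\epsilon\phi$ has compact support in $V\setminus S$, so by hypothesis
\begin{equation*}
\int_V u\wedge\dbar(\chi_\epsilon\phi)=(-1)^{\deg u+1}\int_V f\wedge\chi_\epsilon\phi.
\end{equation*}
Expanding $\dbar(\chi_\epsilon\phi)=\dbar\chi_\epsilon\wedge\phi+\chi_\epsilon\dbar\phi$ and rearranging,
\begin{equation*}
\int_V u\wedge\chi_\epsilon\dbar\phi+\int_V u\wedge\dbar\chi_\epsilon\wedge\phi=(-1)^{\deg u+1}\int_V f\wedge\chi_\epsilon\phi.
\end{equation*}
Since $\chi_\epsilon\to 1$ almost everywhere on $V$, dominated convergence (using $u,f\in L^2_{\rm loc}$ together with the smoothness and compact support of $\phi$) sends the first and third integrals to $\int_V u\wedge\dbar\phi$ and $(-1)^{\deg u+1}\int_V f\wedge\phi$, respectively.

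The remaining task is to show the middle integral tends to zero, and this is the main technical point. By Cauchy--Schwarz,
\begin{equation*}
\Bigl|\int_V u\wedge\dbar\chi_\epsilon\wedge\phi\Bigr|\leq\|\phi\|_\infty\,\|u\|_{L^2(A_\epsilon)}\,\|\dbar\chi_\epsilon\|_{L^2(A_\epsilon)},
\end{equation*}
where $A_\epsilon=\{\epsilon\leq|g|\leq\sqrt{2}\,\epsilon\}\cap\operatorname{supp}\phi$ contains the support of $\dbar\chi_\epsilon\wedge\phi$. Because $A_\epsilon$ shrinks to $S\cap\operatorname{supp}\phi$, a set of Lebesgue measure zero, and $u\in L^2_{\rm loc}$, the factor $\|u\|_{L^2(A_\epsilon)}\to 0$. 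The hard part is the uniform bound $\|\dbar\chi_\epsilon\|_{L^2(A_\epsilon)}=O(1)$. Near a smooth point of $S$ one may choose $g$ as a coordinate function, and a direct computation yields $|\dbar\chi_\epsilon|\lesssim 1/\epsilon$ on $A_\epsilon$ while $\mathrm{vol}(A_\epsilon)=O(\epsilon^2)$, so the bound follows at once. Near a singular point of $S$ the derivative $\partial g$ may vanish, but $\mathrm{Sing}(S)$ is a proper analytic subset of $S$ of strictly smaller dimension, so I would conclude by an induction on $\dim X$, reapplying the same cut-off argument to dispose of the singular stratum. Putting the pieces together and letting $\epsilon\to 0$ in the identity above yields $\int_V u\wedge\dbar\phi=(-1)^{\deg u+1}\int_V f\wedge\phi$, i.e.\ $\dbar u=f$ as distributions on $V$, which completes the proof.
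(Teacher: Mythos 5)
The paper does not prove this lemma at all: it is quoted verbatim from Berndtsson's lecture notes (\cite[Lemma 5.1.3]{Ber10}), so your write-up is being compared with the cited source rather than with an argument in the text. Your proof is exactly the standard cut-off argument behind that citation, and the core of it is correct: with $\chi_\epsilon=\eta(|g|^2/\epsilon^2)$ one has $|\dbar\chi_\epsilon|\lesssim |\partial g|/\epsilon$ on $A_\epsilon=\{\epsilon\le |g|\le \sqrt 2\,\epsilon\}$, so near points where $g$ can be taken as a coordinate the bound $\|\dbar\chi_\epsilon\|_{L^2(A_\epsilon)}=O(1)$ holds, and together with $\|u\|_{L^2(A_\epsilon)}\to 0$ (absolute continuity of the integral, since $A_\epsilon$ shrinks to a null set) the error term dies; dominated convergence handles the other two terms.

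The one place you should tighten is the singular locus. As stated, ``induction on $\dim X$'' for hypersurfaces does not quite parse and risks circularity: the set you still have to cross after the first step is $Z=\mathrm{Sing}(S)$, which has codimension at least $2$ and is not itself a hypersurface. The correct bookkeeping is an induction on the dimension of the remaining exceptional analytic set: after the regular-point argument you know $\dbar u=f$ on $X\setminus Z$; near a smooth point of $Z$ choose coordinates in which $Z$ is contained in a smooth hypersurface $\{w_1=0\}$, note that the equation holds off that hypersurface, and apply your cut-off estimate (with $g=w_1$, where $|\partial g|\equiv 1$) to cross it; the points not yet handled form $\mathrm{Sing}(Z)$, of strictly smaller dimension, and the process terminates. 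Alternatively, one can avoid stratifying altogether by using a logarithmic cut-off, e.g.\ a smoothed version of $\min\bigl(1,\max\bigl(0,\tfrac{\log(|g|/\epsilon^2)}{\log(1/\epsilon)}\bigr)\bigr)$, for which $\dbar\chi_\epsilon\to 0$ in $L^2$ near the whole of $S$. With either repair your argument is a complete and correct proof of the lemma the paper only cites.
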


\begin{remark}
Lemma \ref{lem:things} holds when $h$ is smooth. 
However, since we assume that $h$ is Griffiths semi-positive, we can locally take a sequence of smooth Hermitian metrics increasing to $h$ from Proposition \ref{prop:bp}.
Thus, we have that $f$ and $u$ are $L^2_{loc}$ forms with respect to some smooth Hermitian metric.
Therefore, we can apply Lemma \ref{lem:things}.
\end{remark}

By using Theorem \ref{thm:l2}, we prove Theorem \ref{thm:vanishing}. 
Before proving Theorem \ref{thm:vanishing}, we state the following vanishing theorem for holomorphic line bundles, which was obtained by Nadel in \cite{Nad90} and generalized by Demailly in \cite{Dem93}.

\begin{theorem}$($\cite{Nad90}, \cite{Dem93}, and \cite[(5.11)]{Dem12}$)$
Let $(X, \omega_X)$ be a K\"ahler weakly pseudoconvex manifold, and $L\to X$ be a holomorphic line bundle equipped with a singular Hermitian metric $h$ of weight $\varphi$. 
We assume that $\ai \Theta_{(L, h)}\geq \epsilon \omega$ for some continuous positive function $\epsilon$ on $X$. 
Then 
$$
H^q(X, K_X\otimes L \otimes \mathscr{I}(h))=0
$$
for $q>0$. 
\end{theorem}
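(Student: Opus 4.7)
The plan is to reduce the statement to Demailly's $L^2$-existence theorem (Theorem \ref{thm:demailly}) via a fine resolution of $K_X \otimes L \otimes \mathscr{I}(h)$ by sheaves of locally $L^2$ forms. First I introduce the sheaves $\mathscr{L}^q(L,h)$ whose sections over an open $U \subset X$ are $(n,q)$-forms with values in $L$ that are locally $L^2$ with respect to $\omega_X$ and $h$ and whose distributional $\dbar$-image is also locally $L^2$. These are $C^\infty$-modules, hence fine. Exactness at positive degrees follows from the local H\"ormander $L^2$-estimate on small Stein neighbourhoods, where the singular weight $\varphi$ is handled by an auxiliary twist $e^{-A|z|^2}$ with $A$ large; exactness at degree zero identifies $\ker(\mathscr{L}^0 \to \mathscr{L}^1)$ with $\mathscr{O}(K_X \otimes L) \otimes \mathscr{I}(h)$ directly from the $L^2$ condition on holomorphic sections. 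Consequently,
$$
H^q\bigl(X,\, K_X \otimes L \otimes \mathscr{I}(h)\bigr) \cong \frac{\ker\bigl(\dbar : \Gamma(X,\mathscr{L}^q) \to \Gamma(X,\mathscr{L}^{q+1})\bigr)}{\mathrm{Im}\bigl(\dbar : \Gamma(X,\mathscr{L}^{q-1}) \to \Gamma(X,\mathscr{L}^q)\bigr)},
$$
so it suffices to produce, for every global $\dbar$-closed $f \in \Gamma(X,\mathscr{L}^q)$ with $q \geq 1$, a preimage $u \in \Gamma(X,\mathscr{L}^{q-1})$.

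Next I would solve $\dbar u = f$ with a global $L^2$ estimate. Since $X$ is weakly pseudoconvex, it carries a smooth plurisubharmonic exhaustion $\eta$; enlarging $\omega_X$ to a complete K\"ahler metric $\widehat{\omega} = \omega_X + \ai \ddbar \chi(\eta)$ for a sufficiently convex increasing $\chi$ supplies the completeness needed in Theorem \ref{thm:demailly}, while the estimates remain expressed in $\omega_X$. I exhaust $X$ by sublevel sets $X_j = \{\eta < j\}$ and, using Demailly's analytic approximation, construct a decreasing sequence of smooth plurisubharmonic weights $\varphi_\nu \searrow \varphi$ on each $X_j$ satisfying $\ai \ddbar \varphi_\nu \geq (\epsilon - \delta_\nu)\omega_X$ with $\delta_\nu \downarrow 0$. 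Since $L$ has rank one, the curvature operator $A_{q,\omega_X,h_\nu} = [\ai \ddbar \varphi_\nu, \Lambda_{\omega_X}]$ satisfies $A_{q,\omega_X,h_\nu} \geq q(\epsilon-\delta_\nu)\,\mathrm{Id}$ on $(n,q)$-forms, so Theorem \ref{thm:demailly} applied with the smooth metric $h_\nu = e^{-\varphi_\nu}$ produces $u_{\nu,j}$ solving $\dbar u_{\nu,j} = f$ on $X_j$ with
$$
\int_{X_j} |u_{\nu,j}|^2 e^{-\varphi_\nu}\, dV_{\omega_X}
\leq \frac{1}{q(\epsilon-\delta_\nu)}\int_{X_j} |f|^2 e^{-\varphi_\nu}\, dV_{\omega_X}
\leq \frac{2}{q\epsilon}\int_{X} |f|^2 e^{-\varphi}\, dV_{\omega_X}
$$
for $\nu$ large, using $\varphi_\nu \geq \varphi$ and $\delta_\nu \leq \epsilon/2$.

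Finally I would take limits by a diagonal procedure entirely analogous to the one executed in the proof of Theorem \ref{thm:newds}. On each fixed $X_j$, the tail of the sequence $\{u_{\nu,j}\}$ is bounded in the weighted $L^2$ space with weight $e^{-\varphi_{\nu_0}}$ for every fixed $\nu_0$, because $\varphi_\nu \leq \varphi_{\nu_0}$ for $\nu \geq \nu_0$; weak compactness together with a diagonal argument across $\nu$ and $j$ yields $u \in L^2_{\mathrm{loc}}(X, L; \omega_X, h)$ satisfying $\dbar u = f$ distributionally, and lower semicontinuity of weighted $L^2$ norms under weak convergence combined with monotone convergence of the weights delivers the limit estimate with weight $e^{-\varphi}$. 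Since $u$ and $\dbar u$ are both locally $L^2$, $u$ is a section of $\mathscr{L}^{q-1}$, completing the cohomological vanishing. The main obstacle I anticipate is arranging the monotone smooth plurisubharmonic approximation $\varphi_\nu \searrow \varphi$ with a uniform curvature lower bound on a non-compact manifold: this is precisely Demailly's approximation theorem based on the Ohsawa--Takegoshi extension, where the strict positivity of $\epsilon$ absorbs the small loss $\delta_\nu$ inherent in the approximation procedure.
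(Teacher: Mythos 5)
The paper does not actually prove this statement; it is quoted from \cite{Nad90}, \cite{Dem93} and \cite[(5.11)]{Dem12}, so your proposal has to be measured against the classical argument, whose overall architecture (resolution of $K_X\otimes L\otimes \mathscr{I}(h)$ by the fine sheaves $\mathscr{L}^q$ of locally square-integrable $(n,q)$-forms, then a global solution of $\dbar$ with $L^2$-estimates) you reproduce correctly, and which is also the skeleton of the paper's own proof of Theorem \ref{thm:vanishing}. The local half of your argument (fineness, exactness on small balls with the auxiliary twist $e^{-A|z|^2}$, identification of the degree-zero kernel) is fine. The first genuine problem is in the global step: a $\dbar$-closed $f\in\Gamma(X,\mathscr{L}^q)$ is only \emph{locally} $L^2$, so on a non-compact $X$ the quantity $\int_X|f|^2e^{-\varphi}dV_{\omega_X}$ on the right of your estimate is in general infinite. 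Your bounds on $X_j$ then depend on $j$, and the diagonal limit in $j$ has no uniform control on any fixed compact set, so it need not converge. The classical proof inserts here the replacement $\varphi\mapsto\varphi+\chi(\eta)$ with $\chi$ convex, increasing and rapidly growing ($\eta$ the plurisubharmonic exhaustion): this preserves the curvature hypothesis and the sheaves $\mathscr{L}^q$, makes $f$ globally $L^2$, and one then solves once on all of $X$. (Relatedly, $\epsilon$ is a function which may tend to $0$ at infinity, so the factor $1/(q\epsilon)$ must stay inside the integral; this too is absorbed by the twist.)

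The second and more serious gap is the approximation you rely on. On a sublevel set $X_j$, which is not Stein (for $X$ compact one even has $X_j=X$), there is in general \emph{no} decreasing sequence of smooth weights $\varphi_\nu\searrow\varphi$ with $\ai\ddbar\varphi_\nu\geq(\epsilon-\delta_\nu)\omega_X$ and $\delta_\nu\downarrow 0$; the analogue of Proposition \ref{prop:bp} used in the proof of Theorem \ref{thm:newds} is available there only because $\Omega$ is Stein. Demailly's regularization produces either approximants with analytic singularities and small loss of positivity, or smooth approximants whose loss of positivity is of the order of the Lelong numbers, and this loss is unavoidable: on a projective surface containing a curve $C$ with $C^2<0$ (for instance the negative section of a Hirzebruch surface $\mathbb{F}_n$, $n\geq 2$), take $L=\mathscr{O}_X(C)\otimes A$ with $A$ ample and $A\cdot C<-C^2$, equipped with the product of the canonical singular metric of the divisor $C$ and a positively curved smooth metric on $A$. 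Its curvature current is $[C]+\omega_A\geq\epsilon\omega_X$, so the hypotheses of the theorem hold, yet any smooth metric on $L$ with curvature $\geq(\epsilon-\delta_\nu)\omega_X\geq 0$ would force $c_1(L)\cdot C\geq 0$, contradicting $c_1(L)\cdot C=C^2+A\cdot C<0$. Hence the smooth Theorem \ref{thm:demailly} cannot be fed with the approximants you postulate. The cited proofs instead establish the $L^2$ existence theorem directly for singular weights, either by approximating with metrics having analytic singularities and solving on the complement of the singular locus, which still carries a complete K\"ahler metric, then extending across the analytic set as in Lemma \ref{lem:things}, or by equisingular approximations which are not smooth. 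As written, your global solvability step therefore does not go through, although the overall strategy is the correct one.
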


We also mention the following result related to the coherence of $\mathscr{E}(h)$. 

\begin{proposition}$($cf. \cite[Theorem 1.4]{HI19}$)$\label{prop:coherence}
Let $h$ be a globally (or only locally) Nakano semi-positive singular Hermitian metric and $\mathscr{E}(h)$ be the sheaf of germs of locally square integrable holomorphic sections of $E$ with respect to $h$. 
Then $\mathscr{E}(h)$ is a coherent subsheaf of $\mathscr{O}(E)$. 
\end{proposition}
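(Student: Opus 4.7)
The plan follows the classical Nadel template in the vector-bundle setting of \cite[Theorem 1.4]{HI19}: locally realize $\mathscr{E}(h)$ as a sheaf generated by a Hilbert basis of globally $L^2$ holomorphic sections, and close the gap via an $L^2$-extension produced by the estimate built into Definition \ref{def:localnakano}. The claim is local, so I fix $x_0\in X$ and choose a Stein coordinate $(\Omega,\iota)$ around $x_0$ with $\iota(0)=x_0$, $E|_{\iota(\Omega)}$ trivial, and $\iota(\Omega)$ contained in the neighborhood supplied by Definition \ref{def:localnakano}; take $\Omega$ to be a bounded polydisc in $\mathbb{C}^n$ and identify $(E,h)$ with its pullback. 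Form the Hilbert space
\[
\mathcal{H} := \Bigl\{ s\in \mathscr{O}(\Omega,E) \,:\, \int_\Omega |s|^2_h\, dV < +\infty \Bigr\},
\]
pick a Hilbert basis $\{s_j\}_{j\geq 1}$, and let $\mathscr{F}^{(N)}\subset \mathscr{O}(E)|_\Omega$ be the coherent subsheaf generated by $s_1,\ldots,s_N$. By the strong Noetherian property of the coherent sheaf $\mathscr{O}(E)$, the ascending chain $\{\mathscr{F}^{(N)}\}$ stabilizes on every relatively compact open subset; the limit $\mathscr{F}$ is therefore a coherent subsheaf of $\mathscr{O}(E)|_\Omega$, and by construction $\mathscr{F}\subset \mathscr{E}(h)$.

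For the reverse inclusion at the stalk, fix $s_0\in \mathscr{E}(h)_{x_0}$ and an integer $k\geq 1$, pick $\chi\in C^\infty_c(\Omega)$ with $\chi\equiv 1$ near $0$ and $\operatorname{supp}\chi$ small enough that $\int_{\operatorname{supp}\chi}|s_0|^2_h\, dV<+\infty$, and form the $E$-valued $(n,0)$-form $\tilde s_0:= dz_1\wedge\cdots\wedge dz_n\otimes(\chi s_0)$. Then $g:=\dbar \tilde s_0$ is smooth, $\dbar$-closed, compactly supported in $\Omega$, and vanishes wherever $\chi\equiv 1$. Apply the $L^2$-estimate of Definition \ref{def:nakanosemipositive} with $q=1$, $\omega_\Omega=\ai\ddbar|z|^2$, and the smooth strictly plurisubharmonic weight
\[
\psi_\varepsilon:=(n+k)\log(|z|^2+\varepsilon)+|z|^2, \qquad \varepsilon\in(0,1],
\]
to produce $u_\varepsilon$ solving $\dbar u_\varepsilon=g$. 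Because $\operatorname{supp} g \Subset \Omega\setminus\{0\}$ and $\psi_\varepsilon$, $\ai\ddbar\psi_\varepsilon$, and hence $B_{\omega_\Omega,\psi_\varepsilon}^{-1}$ are uniformly controlled there as $\varepsilon\downarrow 0$, the right-hand side of the estimate is uniformly bounded in $\varepsilon$. A weak-limit extraction plus Fatou's lemma yields $u$ with $\dbar u=g$ on $\Omega$ and
\[
\int_\Omega |u|^2_h\,|z|^{-2(n+k)}e^{-|z|^2}\,dV < +\infty.
\]

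Where $\chi\equiv 1$ we have $g=0$, so $u$ is $\dbar$-closed of top bidegree, hence holomorphic, near $0$; the weighted integrability forces the corresponding $E$-valued holomorphic function $u/(dz_1\wedge\cdots\wedge dz_n)$ to vanish to order $\geq k+1$ at $0$. Set $v:=\tilde s_0-u$; it is $\dbar$-closed with $\int_\Omega|v|^2_h\,dV<+\infty$, so writing $v=dz_1\wedge\cdots\wedge dz_n\otimes\tilde v$ gives $\tilde v\in \mathcal{H}$. Coherence of $\mathscr{F}$ together with the closedness of finitely generated $\mathscr{O}$-submodules of $\mathscr{O}(E)(\Omega')$ in the Fr\'echet topology, applied on a relatively compact Stein neighborhood $\Omega'\ni x_0$, places $\tilde v_{x_0}\in \mathscr{F}_{x_0}$. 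We now have $s_0 = \tilde v + r_k$ in $\mathscr{O}(E)_{x_0}$ with $r_k\in \mathfrak{m}_{x_0}^{k+1}\mathscr{O}(E)_{x_0}$, so
\[
s_0 \in \bigcap_{k\geq 1}\bigl(\mathscr{F}_{x_0} + \mathfrak{m}_{x_0}^k\mathscr{O}(E)_{x_0}\bigr) = \mathscr{F}_{x_0}
\]
by Krull's intersection theorem applied to the finitely generated $\mathscr{O}_{X,x_0}$-module $\mathscr{O}(E)_{x_0}$. Thus $\mathscr{E}(h)=\mathscr{F}$ near $x_0$, establishing coherence.

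The main obstacle I anticipate is organizing the singular-weight approximation cleanly. Definition \ref{def:nakanosemipositive} supplies the $L^2$-estimate only for smooth strictly plurisubharmonic weights, while the natural weight $(n+k)\log|z|^2+|z|^2$ blows up at $0$. One must verify (i) that the right-hand side of the estimate remains uniformly bounded as $\varepsilon\downarrow 0$, (ii) that the weak $L^2_{\rm loc}$-limit of $u_\varepsilon$ still solves $\dbar u=g$ across the removable singularity $\{0\}$, and (iii) that the limiting integrability transfers into the desired order of vanishing of $u$ at $0$. Each step is routine in the smooth-metric case but requires some care when $h$ is singular and may fail to be locally bounded.
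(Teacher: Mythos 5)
Your proposal is correct and follows essentially the same template as the paper's proof: the Hilbert-basis sheaf $\mathscr{F}$ generated by the globally $L^2$ holomorphic sections, the $\dbar$-equation solved against the singular weights $(n+k)\log(|z|^2+\varepsilon)+|z|^2$ supplied by Definition \ref{def:nakanosemipositive}, and the concluding commutative-algebra step. The only cosmetic difference is that you finish with Krull's intersection theorem, whereas the paper spells out the equivalent Artin--Rees plus Nakayama argument; your points (i)--(iii) are exactly the places the paper also leaves to the reader.
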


In the paper \cite{HI19}, the authors prove Proposition \ref{prop:coherence} in the case that $h$ is positively curved in the sense of twisted H\"ormander. 
Although the twisted H\"ormander condition (cf. Definition \ref{def:twistedhorm}) is slightly different from the definition of singular Nakano semi-positivity, 
the proof of Proposition \ref{prop:coherence} is almost the same as the proof in \cite{HI19}. 
Hence, we only mention a sketch of the proof here for the sake of clarity. 

\begin{proof}[\indent\sc Proof of Proposition \ref{prop:coherence}]
	Since the result is local, we fix an arbitrary polydisc $\Delta\subset X$ which trivializes $E=\underline{\mathbb{C}}^r$. 
	Fix a coordinate $(z_1, \ldots , z_n)$ on $\Delta$. 
	Let $H^0_{(2, h)}(\Delta, \underline{\mathbb{C}}^r)$ be the space of the square integrable $\underline{\mathbb{C}}^r$-valued holomorphic functions with respect to $h$ on $\Delta$. 
	$H^0_{(2, h)}(\Delta, \underline{\mathbb{C}}^r)$ generates a coherent ideal sheaf $\mathscr{F}\subset \mathscr{O}(\underline{\mathbb{C}}^r)$. 
	First, we will show that 
	$$
	\mathscr{E}(h)_x \subset \mathscr{F}_x + \mathscr{E}(h)_x\cap \mathfrak{m}_x^{k+1}\cdot \mathscr{O}(\underline{\mathbb{C}}^r)_x
	$$
	for any $k\in \mathbb{N}$, where $x\in \Delta$ and $\mathfrak{m}_x$ is a maximal ideal of $\mathscr{O}(\underline{\mathbb{C}}^r)_x$. 
	Take an element $f={}^t(f_{1,x}, \ldots , f_{r, x})\in \mathscr{E}(h)_x$. 
	Let $\theta$ be a cut-off function around $x$. 
	We consider a $\dbar$-closed $\underline{\mathbb{C}}^r$-valued $(n,1)$-form $\alpha=\dbar(\theta f dz)$. 
	We also take a smooth strictly plurisubharmonic function $\psi_{\delta}(z)=(n+k)\log(|z-x|^2+\delta^2)+|z|^2$. 
	By definition of the global Nakano semi-positivity of $(E, h)$, we can solve $\dbar$-equations with the estimate of $L^2$-norms on $\Delta$. 
	Then we get solutions $\{ u_\delta\}_\delta$ satisfying $\dbar u_\delta=\alpha$ and the $L^2$-estimates with respect to the weight $\psi_\delta$. 
	Taking $\delta\to 0$ and weak limits of the subsequence of $\{ u_\delta\}_{\delta}$, we obtain an $\underline{\mathbb{C}}^r$-valued $(n,0)$-form $udz$ satisfying $\dbar(udz)=\alpha$ and 
	$$
	\int_\Delta \frac{|u|^2_h}{|z-x|^{2(n+k)}} \frac{(\ai\ddbar|z|^2)^n}{n!}<+\infty.
	$$
	Set $F=\theta f-u$. We have that $F\in H^0_{(2, h)}(\Delta, \underline{\mathbb{C}}^r)$ and 
	$f_x-F_x=u_x\in \mathscr{E}(h)_x\cap \mathfrak{m}_x^{k+1}\cdot \mathscr{O}(\underline{\mathbb{C}}^r)_x$. 
	
	Then, due to the Artin-Rees lemma, we get a positive integer $l$ such that 
	\begin{align*}
	\mathscr{E}(h)_x\cap \mathfrak{m}_x^{k+1}\cdot \mathscr{O}(\underline{\mathbb{C}}^r)_x &= \mathfrak{m}^{k-l+1}_x (\mathfrak{m}_x^l\cdot \mathscr{O}(\underline{\mathbb{C}}^r) \cap \mathscr{E}(h)_x )
	\end{align*}
	holds for $k\geq l-1$. Hence, it follows that 
	\begin{align*}
	\mathscr{E}(h)_x&=\mathscr{F}_x+\mathfrak{m}^{k-l+1}_x (\mathfrak{m}_x^l\cdot \mathscr{O}(\underline{\mathbb{C}}^r) \cap \mathscr{E}(h)_x )\\
	&\subset \mathscr{F}_x+\mathfrak{m}_x\cdot \mathscr{E}(h)_x\\
	&\subset \mathscr{E}(h)_x
	\end{align*}
	for $k>l-1$. Thanks to Nakayama's lemma, we obtain $\mathscr{F}_x=\mathscr{E}(h)_x$. 
\end{proof}

Applying theorem \ref{thm:l2}, we can prove Theorem \ref{thm:vanishing}. 

\begin{proof}[\indent\sc Proof of Theorem \ref{thm:vanishing}]
Let $\mathscr{L}^q$ be the sheaf of germs of $(n, q)$-forms $u$ with values in $E$ and with square-integrable coefficients, 
such that $|u|^2_{(\omega_X, h)}$ is locally integrable, $\dbar u$ can be defined in the sense of currents with square-integrable coefficients, and $|\dbar u|^2_{(\omega, h)}$ is locally integrable. 
Then $(\mathscr{L}^{\bullet}, \dbar)$ is a resolution of the sheaf $K_X\otimes \mathscr{E}(h)$ for the reason that 
we can solve the $\dbar$-equation locally by applying Theorem \ref{thm:l2} on any small polydisc. 
Hence, we have that $\mathscr{L}^\bullet$ is a resolution by acyclic sheaves. 

The compactness of $X$ yields that locally integrable sections are also integrable on $X$. 
Hence, by using Theorem \ref{thm:l2} globally, we also get that $H^q(\Gamma(X, \mathscr{L}^\bullet))=0$ for $q>0$. 
Consequently, we can conclude that $H^q(X, K_X\otimes \mathscr{E}(h))=0$ for $q>0$. 
\end{proof}

\begin{remark}
We see that the $L^2$-estimate in Theorem \ref{thm:l2} also holds in the situation that the base manifold $X$ is Stein. 
Hence, we can apply Theorem \ref{thm:l2} on any small polydisc in the above proof. 
\end{remark}

As an application of Theorem \ref{thm:vanishing} and \ref{thm:dsstrict}, we obtain the following theorem, which generalizes the Griffiths vanishing theorem. 

\begin{theorem}$($= {\rm Theorem \ref{cor:grif}}$)$
Let $(X, \omega_X)$ be a projective manifold and a K\"ahler metric on $X$. If $h$ is strictly Griffiths $\delta_{\omega_X}$-positive in the sense of Definition \ref{def:strictgrif} on $X$, then
$$
H^q(X, K_X\otimes \mathscr{E}(h\otimes \det h))=0.
$$ 
\end{theorem}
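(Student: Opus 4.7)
The plan is to derive this theorem as a direct corollary of the two results already established, namely Theorem \ref{thm:dsstrict} (the strict version of the Demailly--Skoda theorem in the singular setting) and Theorem \ref{thm:vanishing} (the Demailly--Nadel--Nakano type vanishing theorem for strictly Nakano $\delta_{\omega_X}$-positive singular metrics). There is essentially no new analytic input required: the whole content of the statement is packaged in the two preceding theorems, and what remains is to chain them together.

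First, I would invoke Theorem \ref{thm:dsstrict} applied to $(E,h)$. Since $h$ is strictly Griffiths $\delta_{\omega_X}$-positive on $X$, that theorem yields that the induced metric $h\otimes\det h$ on the vector bundle $E\otimes\det E$ is globally strictly Nakano $(r+1)\delta_{\omega_X}$-positive on $X$ in the sense of Definition \ref{def:strictnakano}, where $r=\operatorname{rank}E$. Concretely, for any open $U\subset X$ and any local K\"ahler potential $\varphi$ of $\omega_X$ on $U$ (i.e.\ $\sqrt{-1}\partial\bar\partial\varphi=\omega_X$), the twisted metric $(h\otimes\det h)e^{(r+1)\delta\varphi}$ is globally Nakano semi-positive on $E\otimes\det E$ over $U$, as a consequence of applying Theorem \ref{thm:demaillyskoda} to the Griffiths semi-positive metric $he^{\delta\varphi}$.

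Next, I would apply Theorem \ref{thm:vanishing} to the vector bundle $E\otimes\det E$ endowed with the singular Hermitian metric $h\otimes\det h$, with the positivity constant $(r+1)\delta$ (playing the role of $\delta$ in Theorem \ref{thm:vanishing}). This immediately yields
\[
H^q\bigl(X,\, K_X\otimes \mathscr{E}(h\otimes\det h)\bigr)=0 \quad \text{for all } q>0,
\]
which is the desired conclusion. The coherence of $\mathscr{E}(h\otimes\det h)$, which is implicit in the statement so that the cohomology group is well-behaved, follows from Proposition \ref{prop:coherence} since $h\otimes\det h$ is globally Nakano semi-positive in the singular sense.

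In terms of obstacles, the proof is largely formal at this stage; the real analytic content has been absorbed into Theorem \ref{thm:dsstrict} and Theorem \ref{thm:vanishing}. The only point that deserves a brief remark is the compatibility of the $\delta$-constants: one should make explicit that the factor $r+1$ coming from the rank of $E\otimes\det E$ only affects the quantitative $L^2$-estimate and does not play a role in the qualitative vanishing. Accordingly, I would present the proof as essentially a one-line composition, with a sentence noting that the coherence of $\mathscr{E}(h\otimes\det h)$ is guaranteed by Proposition \ref{prop:coherence} so that the statement is meaningful.
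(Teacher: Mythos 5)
Your proposal is correct and follows exactly the route the paper takes: the paper presents Theorem \ref{cor:grif} as an immediate application of Theorem \ref{thm:dsstrict} (to upgrade strict Griffiths $\delta_{\omega_X}$-positivity of $h$ to strict Nakano $(r+1)\delta_{\omega_X}$-positivity of $h\otimes\det h$) followed by Theorem \ref{thm:vanishing}, and gives no further argument. Your additional remarks on the irrelevance of the factor $r+1$ for the qualitative vanishing and on coherence via Proposition \ref{prop:coherence} are accurate and consistent with the paper.
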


Here we introduce the notion of the Lelong number of a singular Hermitian metric on a holomorphic line bundle. 
Usually, the Lelong of a plurisubharmonic function of $\varphi$ at a point $x\in X$ is defined by 
$$
\liminf_{z\to x} \frac{\varphi (z)}{\log |z-x|}
$$
for some coordinate $(z_1, \cdots , z_n)$ around $x$. We also denote by $\nu(\varphi, x)$ the Lelong number of $\varphi$ at $x\in X$. 
It is known that this number is independent of the choice of local coordinates. 

For a semi-positive singular Hermitian metric $g$ on a holomorphic line bundle $L$, we can also define the Lelong number $\nu (g, x)$ of $g$ at $x$ such that 
$$
\nu (g, x):= \liminf_{z\to x}\frac{-\log g (z) }{\log |z-x|}.
$$
Here we regard $g(z)$ as a local semi-positive function. 
Since $g$ is semi-positive, $-\log g(z)$ is a plurisubharmonic function locally. 
Thus, the above definition is reasonable. 
We repeat that this definition is independent of the choice of local coordinates.

There is a relationship between the Lelong number of $\varphi$ and the integrability of $e^{-\varphi}$. 
We introduce the following important result obtained by Skoda in \cite{Sko72}.

\begin{lemma}$($\cite{Sko72}$)$\label{lem:sukoda}
Let $\varphi$ be a plurisubharmonic function. If $\nu(\varphi, x)<1$, 
$e^{-2\varphi}$ is integrable around $x$. 
\end{lemma}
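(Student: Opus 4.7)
The statement is Skoda's classical integrability lemma. My plan is to reduce it, via a quantitative regularization of $\varphi$, to the elementary integrability of monomials of the form $\prod_i |w_i|^{-2 b_i}$ near $0$ with $b_i < 1$.

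After translating so that $x = 0$ and subtracting a constant (which modifies $e^{-2\varphi}$ only by a bounded multiplicative factor), I may assume $\varphi \leq 0$ on a small polydisc $\Delta \subset \mathbb{C}^n$. Set $\gamma := \nu(\varphi, 0) < 1$ and fix $\gamma'$ with $\gamma < \gamma' < 1$. I would then invoke Demailly's Bergman-kernel regularization (cf.\ \cite[Chapter III, Section 5]{DemCom}) to produce psh functions with analytic singularities $\varphi_m = \tfrac{1}{2m}\log \sum_j |\sigma_j^{(m)}|^2$ on slightly shrunk polydiscs, where $\{\sigma_j^{(m)}\}$ is an orthonormal basis of the Bergman space of holomorphic functions on $\Delta$ that are square-integrable against $e^{-2m\varphi}$. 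These satisfy $\varphi_m \to \varphi$ in $L^1_{loc}$ and $\nu(\varphi_m, 0) \to \gamma$, so that for all $m$ sufficiently large, $\nu(\varphi_m, 0) < \gamma' < 1$.

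For such $m$, the integrability of $e^{-2\varphi_m}$ near $0$ is classical: by Hironaka resolution of the coherent ideal generated by $\{\sigma_j^{(m)}\}$, the question reduces to the finiteness of monomial integrals $\int \prod_i |w_i|^{-2 b_i}\, dV$ with each $b_i < 1$, which is immediate. To recover integrability of $e^{-2\varphi}$ itself, I would pass to the limit $m \to \infty$: since $\varphi_m \to \varphi$ in $L^1_{loc}$, we have $\varphi_m \to \varphi$ almost everywhere along a subsequence, hence $e^{-2\varphi_m} \to e^{-2\varphi}$ almost everywhere; by Fatou's lemma, it suffices to bound $\int_U e^{-2\varphi_m}$ uniformly in $m$ on a neighborhood $U$ of $0$.

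The main obstacle is precisely this uniform $L^1_{loc}$ estimate for the approximations, which is the technical core of Skoda's original argument and is where the hypothesis $\gamma < 1$ (rather than merely $\gamma < n$) is used decisively. One way to secure it is to track the quantitative form of Demailly's regularization, keeping uniform control of the constants at each resolution stage; an alternative route would be to use H\"ormander's $L^2$-construction to produce a holomorphic function $f$ on $\Delta$ with $f(0) \neq 0$ and $\int_\Delta |f|^2 e^{-2\varphi} < \infty$, from which integrability of $e^{-2\varphi}$ on a neighborhood of $0$ follows by the non-vanishing of $|f|$ there.
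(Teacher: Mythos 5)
The paper gives no proof of this lemma; it is cited from Skoda \cite{Sko72} and used as a black box in the proof of Theorem~\ref{thmm:van}. Your regularization--Fatou route has a circularity that you partially flag but do not resolve. Demailly's Bergman approximation provides, on compact subsets, the comparison $\varphi - C_1/m \leq \varphi_m$, hence $e^{-2\varphi_m} \leq e^{2C_1/m}\,e^{-2\varphi}$; this bounds $\int e^{-2\varphi_m}$ in terms of $\int e^{-2\varphi}$, which is precisely the quantity whose finiteness is in question, and there is no comparison in the opposite direction. ``Tracking constants at each resolution stage'' does not rescue this: the log resolutions of the ideals $\mathscr{I}_m = (\sigma_1^{(m)}, \sigma_2^{(m)}, \dots)$ vary with $m$, and nothing forces the resulting constants to be uniform in $m$. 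Moreover, the assertion that the monomial exponents $b_i$ obtained after resolution satisfy $b_i < 1$ as soon as $\nu(\varphi_m, 0) < 1$ is not ``immediate'': over a prime exceptional divisor $E$ one finds $b_E = \operatorname{ord}_E(\mathscr{I}_m)/m - (\text{discrepancy of }E)$, and the desired inequality $b_E < 1$ for all $E$ over $x$ amounts to $\operatorname{ord}_E(\mathscr{I}) \leq (\text{discrepancy}(E) + 1)\cdot \operatorname{mult}_x(\mathscr{I})$, which is exactly Skoda's lemma for ideals (the log-canonical-threshold versus Lelong-number inequality). In substance, both the uniform $L^1_{\mathrm{loc}}$ bound and the ``classical'' integrability you invoke for analytic singularities \emph{are} the statement you are trying to prove.

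Your alternative via an $L^2$ construction is the correct modern route, but you only state its conclusion, not its substance. A complete version: (i) in one variable, use the Riesz representation $\psi = h + \int \log|z - \zeta|\, d\mu(\zeta)$ near $0$, note $\mu(\{0\}) = \nu(\psi, 0) < 1$, shrink so that the total mass $\beta$ of $\mu$ on a small closed disc is still $< 1$, and apply Jensen's inequality to the convex function $t \mapsto e^{-2\beta t}$ followed by Fubini to get $\int e^{-2\psi} \lesssim \int |w|^{-2\beta}\, dV(w) < \infty$; (ii) by Siu's semicontinuity of Lelong numbers along slices, the restriction of $\varphi$ to a \emph{generic} complex line $L$ through $x$ has the same Lelong number, so (i) applies to $\varphi|_L$ and gives $\int_{L} e^{-2\varphi} < \infty$; (iii) apply the Ohsawa--Takegoshi extension theorem to extend the constant function $1$ from $L$ to a holomorphic $f$ on a polydisc with $\int |f|^2 e^{-2\varphi} \lesssim \int_{L} e^{-2\varphi} < \infty$ and $f(x) = 1 \neq 0$, whence $e^{-2\varphi} \leq C |f|^2 e^{-2\varphi}$ near $x$ is integrable. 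Each of (i)--(iii) must actually be carried out; as written, your proposal contains none of them. (For the record, Skoda's original argument is closest in spirit to (i) executed directly in $\mathbb{C}^n$ with a careful mean-value/Fubini argument; the Ohsawa--Takegoshi shortcut is anachronistic but now standard.)
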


We consider the strictly Nakano $\delta_{\omega_X}$-positive or strictly Griffiths $\delta_{\omega_X}$-positive singular Hermitian metric $h$ again. 
We recall that $\det h$ is a semi-positive singular Hermitian metric on $\det E$ (cf. \cite[Proposition 1.3]{Rau15}).
If the Lelong number of $\det h$ satisfies some good inequalities, 
we have that $\mathscr{E}(h)=\mathscr{O}(E)$ or $\mathscr{E}(h\otimes \det h)=\mathscr{O}(E\otimes \det E)$. 
These properties imply the following vanishing theorems. 

\begin{theorem}\label{thmm:van}
Let $(X, \omega_X)$ be a projective manifold and a K\"ahler metric on $X$. We also let $h$ be a globally strictly Nakano $\delta_{\omega_X}$-positive singular Hermitian metric on $E$.
If $\nu(\det h, x)<2$ for any point $x\in X$, we have $\mathscr{E}(h)=\mathscr{O}(E)$ and 
$$
H^q(X, K_X\otimes E)=0
$$
for $q>0$. 
\end{theorem}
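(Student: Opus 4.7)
The plan is to derive the theorem from Theorem \ref{thm:vanishing} by showing that, under the Lelong number hypothesis, the multiplier sheaf $\mathscr{E}(h)$ coincides with $\mathscr{O}(E)$. Concretely, it suffices to prove that for every local holomorphic section $s$ of $E$ near an arbitrary point $x \in X$, the function $|s|_h^2$ is locally integrable; the conclusion $H^q(X, K_X \otimes E) = 0$ for $q>0$ then follows immediately from Theorem \ref{thm:vanishing}.

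To establish the local integrability, I would work in a local holomorphic frame $(e_1, \ldots, e_r)$ of $E$ and let $H=(H_{\alpha\bar\beta})$ be the matrix of $h$, so that $H^{-1}$ represents the dual metric $h^\star$. Strict Nakano $\delta_{\omega_X}$-positivity includes Griffiths semi-positivity of $h$ (cf.\ Definitions \ref{def:nakanosemipositive} and \ref{def:strictnakano}), so $h^\star$ is Griffiths semi-negative on $E^\star$. Each diagonal entry $(H^{-1})_{\alpha\bar\alpha}=|e_\alpha^\star|_{h^\star}^2$ is therefore plurisubharmonic, and in particular locally bounded above on a neighborhood of $x$. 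Applying the Cauchy--Schwarz inequality to the $2\times 2$ principal minors of the positive semi-definite matrix $H^{-1}$ yields $|(H^{-1})_{\alpha\bar\beta}|^2\leq (H^{-1})_{\alpha\bar\alpha}(H^{-1})_{\beta\bar\beta}$, so every entry of $H^{-1}$ is locally bounded near $x$.

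The next step is the cofactor identity $H=(\det H)\cdot \operatorname{adj}(H^{-1})$: each entry of $\operatorname{adj}(H^{-1})$ is, up to sign, an $(r-1)\times(r-1)$ minor of $H^{-1}$, hence locally bounded by some constant. This gives the pointwise estimate
$$
|s|_h^2=\sum_{\alpha,\beta} H_{\alpha\bar\beta}s_\alpha \overline{s_\beta}\leq C\cdot \det H
$$
on a neighborhood of $x$, where $C$ absorbs the bounds on $H^{-1}$ together with the local bound on the coefficients of $s$. Next, $\det h$ is a Griffiths semi-positive singular Hermitian metric on the line bundle $\det E$ with plurisubharmonic local weight $\phi=-\log \det H$ satisfying $\nu(\phi,x)=\nu(\det h,x)<2$. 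By Skoda's Lemma \ref{lem:sukoda} applied to $\phi/2$, the function $\det H=e^{-\phi}$ is integrable on a neighborhood of $x$, and hence $|s|_h^2\in L^1_{\mathrm{loc}}$. This proves $\mathscr{E}(h)=\mathscr{O}(E)$ and finishes the argument by Theorem \ref{thm:vanishing}.

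I expect the routine ingredients (Cauchy--Schwarz, the cofactor identity, Skoda's lemma) to pose no difficulty. The only delicate point is that $H$ may degenerate on a measure-zero set where $\det H=0$ and $H^{-1}$ is undefined, but this set does not interfere with any $L^1_{\mathrm{loc}}$ bound, so the plurisubharmonicity argument for $(H^{-1})_{\alpha\bar\alpha}$ is to be read off the degeneracy locus. Equivalently, one can phrase the whole estimate via the natural isomorphism $E\cong \wedge^{r-1}E^\star\otimes \det E$, under which $|s|_h^2=|\tilde s|_{\wedge^{r-1}h^\star}^2\cdot \det H$ with $\wedge^{r-1}h^\star$ a Griffiths semi-negative singular Hermitian metric; but the matrix formulation keeps the reduction self-contained.
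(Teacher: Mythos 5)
Your argument is correct and follows essentially the same route as the paper: write $h=(\det h)\,\widehat{h^\star}$, use Griffiths semi-negativity of $h^\star$ to bound the adjugate, and apply Skoda's lemma to the weight of $\det h$ (equivalently to $\tfrac12\log\det h^\star$, whose Lelong number is $<1$). The only cosmetic difference is that you prove local boundedness of $\widehat{h^\star}$ directly via Cauchy--Schwarz on minors, whereas the paper simply cites \cite[Lemma 2.2.4]{PT18} for that fact.
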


\begin{proof}
By the definition of the Lelong number of a singular Hermitian metric on a holomorphic line bundle, 
we have $\nu(\frac{1}{2}\log \det h^\star, x)<1$ for every $x\in X$. 
From Lemma \ref{lem:sukoda}, 
$$
e^{-\log \det h^\star}= \frac{1}{\det h^\star}
$$
is locally integrable. Locally, we see that 
$$
h= \frac{1}{\det h^\star}\widehat{h}^\star, 
$$
where $\widehat{h}^\star$ is the adjugate matrix of $h^\star$. 
Since $h^\star$ is Griffiths semi-negative, each element of $\widehat{h}^\star$ is locally bounded \cite[Lemma 2.2.4]{PT18}. 
Then it follows that $|u|^2_h$ is locally integrable for any local holomorphic section $u\in \mathscr{O}(E)$ of $E$. 
Therefore, we can conclude that $\mathscr{E}(h)=\mathscr{O}(E)$ and $H^q(X, K_X\otimes E)=0$ for $q>0$ from Theorem \ref{thm:vanishing}. 
\end{proof}
Repeating the above argument and using Theorem \ref{cor:grif}, we can also prove the following theorem. 

\begin{theorem}$($\cite[Corollary 1.4]{Ina18}$)$\label{thmm:van2}
Let $(X, \omega_X)$ be a projective manifold and a K\"ahler metric on $X$. We also let $h$ be a strictly Griffiths $\delta_{\omega_X}$-positive singular Hermitian metric on $E$.
If $\nu(\det h, x)<1$ for any point $x\in X$, we have $\mathscr{E}(h\otimes \det h)=\mathscr{O}(E\otimes \det E)$ and 
$$
H^q(X, K_X\otimes E\otimes \det E)=0
$$
for $q>0$. 
\end{theorem}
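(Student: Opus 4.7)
The plan is to reduce the cohomology vanishing to Theorem~\ref{cor:grif} by proving the stronger sheaf equality $\mathscr{E}(h\otimes \det h) = \mathscr{O}(E\otimes \det E)$ under the Lelong number hypothesis. Once this equality is in hand, Theorem~\ref{cor:grif} immediately yields $H^q(X, K_X\otimes E\otimes \det E) = H^q(X, K_X\otimes \mathscr{E}(h\otimes \det h)) = 0$ for $q>0$.

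The sheaf equality is a pointwise local statement. Fix $x\in X$, choose a local holomorphic frame $(e_1,\ldots,e_r)$ of $E$ near $x$, and set $\omega_0 = e_1\wedge\cdots\wedge e_r$, which is a local frame of $\det E$. Every local holomorphic section of $E\otimes \det E$ has the form $g\otimes \omega_0$ with $g\in \mathscr{O}(E)$, and the pointwise norm factorizes as
$$|g\otimes \omega_0|^2_{h\otimes \det h} = |g|^2_h \cdot \det H,$$
where $H=(h(e_i,e_j))$ is the Gram matrix of $h$ in this frame; note $|\omega_0|^2_{\det h}=\det H$.

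Next I would repeat the matrix identity used in the proof of Theorem~\ref{thmm:van}. Since $h = \frac{1}{\det h^\star}\widehat{h^\star}$ and $\det h^\star = 1/\det H$, each entry satisfies $h_{ij} = \det H\cdot (\widehat{h^\star})_{ij}$. Because $h^\star$ is Griffiths semi-negative, its entries, and therefore those of the adjugate $\widehat{h^\star}$, are locally bounded by \cite[Lemma 2.2.4]{PT18}. Consequently,
$$|g|^2_h = \sum_{i,j} h_{ij}g_i\bar{g_j} \leq C\,\det H\cdot \sum_i|g_i|^2$$
locally. Since $\sum_i|g_i|^2$ is also locally bounded ($g$ being holomorphic), substituting into the factorization above gives
$$|g\otimes \omega_0|^2_{h\otimes \det h} \leq C'\,(\det h)^2$$
on some neighborhood of $x$.

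The concluding step is Skoda's integrability theorem (Lemma~\ref{lem:sukoda}): writing $\det h = e^{-\varphi}$ with $\varphi$ a plurisubharmonic local weight, the hypothesis $\nu(\det h,x)=\nu(\varphi,x)<1$ ensures $(\det h)^2 = e^{-2\varphi}$ is locally integrable near $x$. Thus $|g\otimes \omega_0|^2_{h\otimes \det h}$ is locally integrable, giving $\mathscr{E}(h\otimes \det h)_x=\mathscr{O}(E\otimes \det E)_x$, and summing over $x$ finishes the proof. The only place this argument departs from Theorem~\ref{thmm:van} is the extra factor of $\det H$ arising from the $\det E$ twist, which raises the relevant power from $\det h$ to $(\det h)^2$ and is precisely what converts the Lelong bound $\nu(\det h)<2$ into the sharper requirement $\nu(\det h)<1$. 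I do not expect any serious obstacle beyond this bookkeeping.
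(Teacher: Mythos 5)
Your proposal is correct and follows essentially the same route as the paper, whose proof of this statement is simply the instruction to repeat the argument of Theorem \ref{thmm:van} (the adjugate decomposition $h=\frac{1}{\det h^\star}\widehat{h}^\star$, local boundedness of $h^\star$ via \cite[Lemma 2.2.4]{PT18}, and Skoda's lemma) with the extra $\det h$ factor, and then invoke Theorem \ref{cor:grif}; your accounting of how the twist raises $\det h$ to $(\det h)^2$ and hence sharpens the Lelong bound to $\nu(\det h,x)<1$ is exactly the intended bookkeeping.
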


As an application of Theorem \ref{thmm:van} and \ref{thmm:van2}, 
we can say that certain vector bundles cannot admit Nakano or Griffiths $\delta_{\omega_X}$-positive singular Hermitian metrics. 
We show the example. 

\begin{example}\label{examp:rei}
Let $(\mathbb{P}^n, \omega_{FS})$ be the $n$-dimensional projective space and the Fubini-Study metric on $\mathbb{P}^n$, where $n\geq 2$. 
Let $Q$ be the vector bundle of rank $n$ over $\mathbb{P}^n$ defined by 
$$
0\to \mathscr{O}(-1)\to \underline{\mathbb{C}}^{n+1}\to Q\to 0,
$$
where $\underline{\mathbb{C}}^{n+1}$ is the trivial vector bundle of rank $n+1$ and $\mathscr{O}(-1)$
is the tautological line bundle. 
There exist the isomorphisms
\begin{align*}
\det Q &\cong \mathscr{O}(1)\\
T\mathbb{P}^n &\cong Q\otimes \det Q.
\end{align*}
Then we can conclude that $Q$ does not admit any Griffiths $\delta_{\omega_{FS}}$-positive singular Hermitian metrics such that their Lelong number is less than $1$ at every point (cf. \cite[Example 5.2]{Ina18}) and $T\mathbb{P}^n$
does not admit any globally Nakano $\delta_{\omega_{FS}}$-positive singular Hermitian metrics such that their Lelong number is less than $2$ at every point for any $\delta>0$. 
Indeed, we have that 
$$
H^q(\mathbb{P}^n, K_{\mathbb{P}^n}\otimes T\mathbb{P}^n)\cong \mathbb{C}\neq 0
$$
if $q=n-1$ (cf. \cite[Chapter VII, Example 8.4]{DemCom}).
\end{example}

\section{Properties of Nakano semi-positivity}\label{sec:property}
In this short section, we discuss the validity of the definition of Nakano semi-positive singular Hermitian metrics. 
We show the following results.

\begin{proposition}\label{prop:linebdl}
Let $L\to X$ be a holomorphic line bundle on a complex manifold $X$. 
We also let $h$ be a $($Griffiths$)$ semi-positive singular Hermitian metric on $L$. 
Then $h$ is globally Nakano semi-positive in the sense of singular Hermitian metrics.
\end{proposition}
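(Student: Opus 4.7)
The plan is to reduce to the smooth case by a monotone approximation, following the blueprint of the proof of Theorem \ref{thm:newds} but exploiting the fact that for a line bundle, Griffiths semi-positivity, Nakano semi-positivity, and semi-positivity of the curvature current all coincide. On any trivialization of $L$, we may write $h=e^{-\varphi}$ for a plurisubharmonic function $\varphi$.

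Fix a Stein coordinate $(\Omega,\iota)$ trivializing $L$, a K\"ahler form $\omega_\Omega$ on $\Omega$, a smooth strictly plurisubharmonic function $\psi$ on $\Omega$, an integer $1\le q\le n$, and a $\dbar$-closed $f\in L^2_{(n,q)}(\Omega,\iota^\star L;\omega_\Omega,(\iota^\star h)e^{-\psi})$ for which the right-hand side of the estimate in Definition \ref{def:nakanosemipositive} is finite. Embed $\Omega$ as a closed submanifold of some $\mathbb{C}^N$ and apply Theorem \ref{thm:siu} to obtain a holomorphic retraction $p\colon U\to\Omega$ from an open neighborhood $U$. Then $p^\star\iota^\star h$ is a Griffiths semi-positive singular Hermitian metric on the trivial line bundle over $U$, and Proposition \ref{prop:bp} furnishes smooth Hermitian metrics $\{h_\nu\}$ with semi-positive curvature, increasing to $p^\star\iota^\star h$ on relatively compact subdomains of $U$. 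Setting $g_\nu:=i^\star h_\nu$, we obtain a sequence of smooth semi-positive metrics on $\iota^\star L$ increasing to $\iota^\star h$ on any relatively compact subset of $\Omega$.

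Fix a Stein exhaustion $\{\Omega_j\}$ of $\Omega$ such that each $\Omega_j$ admits a complete K\"ahler metric $\widehat{\omega}_j$. For each pair $(\nu,j)$, the smooth twisted metric $g_\nu e^{-\psi}$ on $\iota^\star L|_{\Omega_j}$ has curvature $\ai\ddbar(-\log g_\nu+\psi)\ge \ai\ddbar\psi$, so for a line bundle it is Nakano semi-positive. Applying Theorem \ref{thm:demailly} on $(\Omega_j,\widehat{\omega}_j)$ with the (non-complete) K\"ahler form $\omega_\Omega$, we solve $\dbar u_\nu^j=f$ on $\Omega_j$ with
\begin{align*}
\int_{\Omega_j}|u_\nu^j|^2_{(\omega_\Omega,g_\nu)}e^{-\psi}\,dV_{\omega_\Omega}
&\le \int_{\Omega_j}\langle A^{-1}_{q,\omega_\Omega,g_\nu e^{-\psi}} f,f\rangle_{(\omega_\Omega,g_\nu)}e^{-\psi}\,dV_{\omega_\Omega}\\
&\le \int_{\Omega_j}\langle B^{-1}_{\omega_\Omega,\psi} f,f\rangle_{(\omega_\Omega,g_\nu)}e^{-\psi}\,dV_{\omega_\Omega}\\
&\le \int_{\Omega}\langle B^{-1}_{\omega_\Omega,\psi} f,f\rangle_{(\omega_\Omega,\iota^\star h)}e^{-\psi}\,dV_{\omega_\Omega},
\end{align*}
where the second inequality follows from the operator domination $A_{q,\omega_\Omega,g_\nu e^{-\psi}}\ge B_{\omega_\Omega,\psi}$ (Lemma \ref{lem:operator}, applied to the psh difference of weights) and the third from $g_\nu\le \iota^\star h$.

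Finally, the standard weak-limit / diagonal / monotone convergence procedure---exactly the one deployed in the latter half of the proof of Theorem \ref{thm:newds}---first lets $\nu\to\infty$ on each fixed $\Omega_j$ to extract a weak limit $u^j\in L^2_{(n,q-1)}(\Omega_j,\iota^\star L;\omega_\Omega,(\iota^\star h)e^{-\psi})$ solving $\dbar u^j=f$ with the bound appearing on the right-hand side of the estimate in Definition \ref{def:nakanosemipositive}, and then lets $j\to\infty$ to produce the desired $u$ on all of $\Omega$. The main technical point is this double weak-limit argument, but it is essentially identical to the one already carried out in Theorem \ref{thm:newds}; the rank-one hypothesis makes the intermediate approximation step cleaner than in the general vector-bundle setting because no determinant twist is required.
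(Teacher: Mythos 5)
Your proof is correct and follows essentially the same approach as the paper's: write $h=e^{-\varphi}$ for a psh weight $\varphi$, produce a decreasing sequence of smooth psh weights (equivalently, smooth metrics increasing to $h$) on relatively compact subsets via Siu's retraction theorem plus Proposition \ref{prop:bp}, and then run the Stein-exhaustion / weak-limit / monotone-convergence argument exactly as in the proof of Theorem \ref{thm:newds}. The paper even remarks that one may alternatively regularize $\varphi$ by direct convolution instead of invoking \ref{prop:bp}, but the skeleton of the argument is identical to yours.
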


\begin{proposition}\label{prop:riemannmen}
Let $S$ be a Riemann surface and $E\to S$ be a holomorphic vector bundle on $S$. 
We also let $h$ be a Griffiths semi-positive singular Hermitian metric on $E$. 
Then $h$ is globally Nakano semi-positive in the sense of singular Hermitian metrics.
\end{proposition}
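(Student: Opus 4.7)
The plan is to follow the proof of Theorem \ref{thm:newds}, significantly simplified by the fact that on a one-dimensional base Griffiths and Nakano positivity coincide. We fix an arbitrary Stein coordinate $(\Omega, \iota)$ around a point of $S$ with $\iota^\star E$ trivial; since $\dim S = 1$, only $q = 1$ needs to be verified in Definition \ref{def:nakanosemipositive}. Embedding $\Omega$ properly into some $\mathbb{C}^N$ and applying Theorem \ref{thm:siu}, we can realize $\iota^\star h$ as the pullback by the inclusion $i\colon \Omega \to U$ of a Griffiths semi-positive singular Hermitian metric on a trivial bundle over an open neighborhood $U \subset \mathbb{C}^N$. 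Proposition \ref{prop:bp} then furnishes smooth Griffiths positive metrics $h_\nu$ on $\iota^\star E$ that increase to $\iota^\star h$ on every relatively compact subset of $\Omega$.

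The crucial one-dimensional observation is that every $\tau \in T\Omega \otimes \iota^\star E$ splits as $\xi \otimes s$, since $T\Omega$ has rank one. Consequently
$$
\widetilde{\Theta}_{(\iota^\star E, h_\nu)}(\tau, \tau) = \widetilde{\Theta}_{(\iota^\star E, h_\nu)}(\xi \otimes s, \xi \otimes s) \geq 0,
$$
so each smooth metric $h_\nu$ is Nakano semi-positive on $\Omega$. This is the only place where the hypothesis $\dim S = 1$ is used; everything else is a routine singular approximation.

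Next, fix a K\"ahler form $\omega_\Omega$, a smooth strictly plurisubharmonic $\psi$ on $\Omega$, and a $\dbar$-closed $f \in L^2_{(1,1)}(\Omega, \iota^\star E; \omega_\Omega, (\iota^\star h) e^{-\psi})$ such that the right-hand side of the estimate in Definition \ref{def:nakanosemipositive} is finite. Choose a Stein exhaustion $\{\Omega_j\}$ of $\Omega$. On each $\Omega_j$, we apply Theorem \ref{thm:demailly} with the weight $h_\nu e^{-\psi}$; using
$$
A_{1, \omega_\Omega, h_\nu e^{-\psi}} = [\ai \Theta_{(\iota^\star E, h_\nu)}, \Lambda_{\omega_\Omega}] + B_{\omega_\Omega, \psi} \geq B_{\omega_\Omega, \psi}
$$
together with the monotonicity $h_\nu \nearrow \iota^\star h$, we get solutions $u_{\nu,j}$ of $\dbar u_{\nu,j} = f$ with the uniform bound
$$
\int_{\Omega_j} |u_{\nu,j}|^2_{(\omega_\Omega, h_\nu)} e^{-\psi}\, dV_{\omega_\Omega} \leq \int_\Omega \langle B^{-1}_{\omega_\Omega, \psi} f, f \rangle_{(\omega_\Omega, \iota^\star h)} e^{-\psi}\, dV_{\omega_\Omega} < +\infty.
$$

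Finally, a diagonal weak-compactness argument, identical to the one closing the proof of Theorem \ref{thm:newds}, lets us pass to the limit first in $\nu$ (producing $u_j$ on $\Omega_j$ with the correct $\iota^\star h$-bound by monotone convergence) and then in $j$, yielding $u \in L^2_{(1,0)}(\Omega, \iota^\star E; \omega_\Omega, (\iota^\star h) e^{-\psi})$ solving $\dbar u = f$ with the required estimate. The main technical obstacle, such as it is, lies in verifying that the weak limit still satisfies $\dbar u = f$ as currents across all of $\Omega$ and in the bookkeeping of the double limit; both are handled by the standard weak-closedness of $\dbar$ and monotone convergence, exactly as in the proof of Theorem \ref{thm:newds}.
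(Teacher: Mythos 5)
Your proof is correct and follows essentially the same route as the paper: take a Stein coordinate, embed into $\mathbb{C}^N$ via Siu's theorem, approximate $\iota^\star h$ by smooth Griffiths positive metrics via Proposition \ref{prop:bp}, observe that on a one-dimensional base every $\tau \in T\Omega \otimes \iota^\star E$ is decomposable so Griffiths semi-positivity of each $h_\nu$ implies Nakano semi-positivity, then run the exhaustion-plus-diagonal weak-compactness argument from the proof of Theorem \ref{thm:newds}. The paper compresses steps (3)--(6) into ``repeating the argument in the proof of Theorem \ref{thm:demaillyskoda}''; you spell them out, which is fine but does not change the substance.
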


If $h$ is smooth, Griffiths semi-positivity is equivalent to Nakano semi-positivity in the settings of Proposition \ref{prop:linebdl} and \ref{prop:riemannmen}. 
These propositions imply that our definition of Nakano semi-positivity of singular Hermitian metrics is appropriate when we compare it with already-known positivity notions. 
Repeating the argument in the proof of Theorem \ref{thm:demaillyskoda}, we can prove the above propositions. 
Here we use the same notation as in the proof of Theorem \ref{thm:demaillyskoda}. 

\begin{proof}[\indent\sc Proof of Proposition \ref{prop:linebdl}]
Let $(\Omega, \iota)$ be a Stein coordinate of $X$ such that $L|_{\iota(\Omega)}$ is trivial on $\iota(\Omega)$. We simply write $(\iota^\star L, \iota^\star h)=(L, h)$ on $\Omega$. 
We take an arbitrary K\"ahler metric $\omega_\Omega$, 
an arbitrary smooth plurisubharmonic function $\psi$, and a global holomorphic frame $s$ of $L$ on $\Omega$. 
We define the plurisubharmonic function $\varphi$ on $\Omega$ by 
$$
|s|_{h}=e^{-\varphi}. 
$$
By using a usual regularization technique of convolution or Proposition \ref{prop:bp} and repeating the argument in the proof of Theorem \ref{thm:demaillyskoda}, we get a sequence of smooth plurisubharmonic functions $\{ \varphi_\nu \}_{\nu =1}^\infty$ such that this sequence is decreasing to $\varphi$ on any relatively compact subset of $\Omega$. 
Then, taking an exhaustion of $\Omega$, we can obtain the following estimate 
$$
\int_\Omega |u|^2_{\omega_\Omega}e^{-(\varphi+\psi)}dV_{\omega_\Omega} \leq \int_\Omega \langle B^{-1}_{\omega_\Omega, \psi}f, f \rangle_{\omega_\Omega} e^{-(\varphi +\psi)}dV_{\omega_\Omega}
$$
for any $\dbar$-closed $f\in L^2_{(n,q)}(\Omega, L ; \omega_\Omega, he^{-\psi})$ with the solution $u\in L^2_{(n, q-1)}(\Omega, L ; \omega_\Omega, he^{-\psi})$ of $\dbar u=f$.
Consequently, we complete the proof. 
\end{proof}

\begin{proof}[\indent\sc Proof of Proposition \ref{prop:riemannmen}]
We obtain a sequence of smooth Hermitian metrics, with Griffiths positive curvature, increasing to $h$ on any relatively compact subset again. 
Since $S$ is a Riemann surface, $h_\nu$ is also Nakano semi-positive. 
Hence, repeating the argument in the proof of Theorem \ref{thm:demaillyskoda}, we get 
$$
\int_\Omega |u|^2_{(\omega_\Omega, h)}e^{-\psi}dV_{\omega_\Omega} \leq \int_\Omega \langle B^{-1}_{\omega_\Omega, \psi}f, f \rangle_{(\omega_\Omega, h)} e^{-\psi}dV_{\omega_\Omega}
$$
for any $\dbar$-closed $f\in L^2_{(1,1)}(\Omega, E ; \omega_\Omega, he^{-\psi})$ with the solution $u\in L^2_{(1, 0)}(\Omega, E ; \omega_\Omega, he^{-\psi})$ of $\dbar u=f$. 
\end{proof}

\section{Applications}\label{sec:applications}
In this section, as applications of our definitions and main theorems, we show several results. 
First, we prove that Nakano semi-positivity is preserved with respect to a increasing sequence.
This phenomenon is firstly mentioned in \cite{Ina20}. 
Here we explicitly state the detailed proof. 

\begin{proposition}\label{prop:increasing}
	We let $h$ be a singular Hermitian metric on $E\to X$. 
	Assume that there exists a sequence of smooth Nakano semi-positive metrics $\{ h_\nu\}_{\nu=1}^\infty$ increasing to $h$ pointwise. 
	Then $h$ is globally Nakano semi-positive in the sense of Definition \ref{def:nakanosemipositive}. 
\end{proposition}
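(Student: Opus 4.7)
The plan is to follow almost verbatim the template established in the second half of the proof of Theorem \ref{thm:newds}, but now the increasing sequence of smooth Nakano semi-positive metrics is given by hypothesis rather than constructed by Proposition \ref{prop:bp}. Fix an arbitrary Stein coordinate $(\Omega,\iota)$ such that $\iota^\star E$ is trivial on $\Omega$, a K\"ahler metric $\omega_\Omega$ on $\Omega$, a smooth strictly plurisubharmonic function $\psi$ on $\Omega$, an integer $q\ge 1$, and a $\dbar$-closed $f\in L^2_{(n,q)}(\Omega, \iota^\star E;\omega_\Omega,(\iota^\star h)e^{-\psi})$ with finite right-hand side in Definition \ref{def:nakanosemipositive}. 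Also fix a Stein exhaustion $\{\Omega_j\}_{j=1}^\infty$ of $\Omega$ by relatively compact Stein subdomains. For notational simplicity I suppress $\iota$ and write $(E,h)$ in place of $(\iota^\star E,\iota^\star h)$.

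First, for each $\nu$ the metric $h_\nu$ is smooth and Nakano semi-positive, so by Proposition \ref{prop:nakano} it satisfies condition (2) of that proposition. Applied on $\Omega_j$ it produces $u_\nu\in L^2_{(n,q-1)}(\Omega_j,E;\omega_\Omega,h_\nu e^{-\psi})$ with $\dbar u_\nu=f$ and
$$
\int_{\Omega_j}|u_\nu|^2_{(\omega_\Omega,h_\nu)}e^{-\psi}dV_{\omega_\Omega}\le\int_{\Omega_j}\langle B^{-1}_{\omega_\Omega,\psi}f,f\rangle_{(\omega_\Omega,h_\nu)}e^{-\psi}dV_{\omega_\Omega}\le\int_\Omega\langle B^{-1}_{\omega_\Omega,\psi}f,f\rangle_{(\omega_\Omega,h)}e^{-\psi}dV_{\omega_\Omega}<+\infty,
$$
where the last inequality uses that $h_\nu\nearrow h$ pointwise, so the pointwise integrand $\langle B^{-1}_{\omega_\Omega,\psi}f,f\rangle_{(\omega_\Omega,h_\nu)}$ is monotone increasing in $\nu$ and dominated by its $h$-analogue (the operator $B^{-1}_{\omega_\Omega,\psi}$ itself does not depend on the metric on $E$).

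Next, monotonicity $h_{\nu_0}\le h_\nu$ for $\nu\ge\nu_0$ yields $|u_\nu|^2_{h_{\nu_0}}\le |u_\nu|^2_{h_\nu}$, so $\{u_\nu\}_{\nu\ge\nu_0}$ is uniformly bounded in $L^2_{(n,q-1)}(\Omega_j,E;\omega_\Omega,h_{\nu_0}e^{-\psi})$. By a standard diagonal extraction I obtain a subsequence $\{u_{\nu_k}\}$ converging weakly in $L^2_{(n,q-1)}(\Omega_j,E;\omega_\Omega,h_{\nu_0}e^{-\psi})$ for every $\nu_0$; its weak limit $u_j$ still satisfies $\dbar u_j=f$ on $\Omega_j$ in the distributional sense. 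Weak lower semi-continuity of the $L^2$-norm and the inequality $|u_{\nu_k}|^2_{h_{\nu_0}}\le |u_{\nu_k}|^2_{h_{\nu_k}}$ give
$$
\int_{\Omega_j}|u_j|^2_{(\omega_\Omega,h_{\nu_0})}e^{-\psi}dV_{\omega_\Omega}\le\int_\Omega\langle B^{-1}_{\omega_\Omega,\psi}f,f\rangle_{(\omega_\Omega,h)}e^{-\psi}dV_{\omega_\Omega}
$$
for every $\nu_0$. Letting $\nu_0\to\infty$ and invoking the monotone convergence theorem (since $|u_j|^2_{h_{\nu_0}}\nearrow |u_j|^2_h$) replaces $h_{\nu_0}$ by $h$ in the left-hand side. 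A second weak-limit argument along the exhaustion $\{\Omega_j\}$, using the same monotonicity to bound $\{u_j\}$ uniformly in $L^2$ with respect to $h_{\nu_0}e^{-\psi}$ on each fixed $\Omega_{j_0}$, produces the desired global solution $u\in L^2_{(n,q-1)}(\Omega,E;\omega_\Omega,he^{-\psi})$ of $\dbar u=f$ with the required estimate, verifying Definition \ref{def:nakanosemipositive}.

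The main technical hurdle is the bookkeeping in the double weak-limit/diagonal argument: one must simultaneously pass to weak limits in $\nu$ (against a fixed $h_{\nu_0}$), then monotonically let $\nu_0\to\infty$ to recover an estimate against $h$ itself, and then iterate across the exhaustion. Each step is standard individually, but the order matters, because the $L^2$-spaces in which the $u_\nu$ live depend on $\nu$ and one cannot directly extract a weakly convergent subsequence in the target space $L^2_{(\omega_\Omega,he^{-\psi})}$. Using Proposition \ref{prop:nakano} to reduce the smooth case to exactly the inequality we want (rather than the Bochner-Kodaira form of Theorem \ref{thm:demailly}) is what makes the monotone passage to the limit clean.
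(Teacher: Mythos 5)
Your proof is correct and follows essentially the same argument as the paper's: apply the smooth Nakano $L^2$-estimate (via Proposition \ref{prop:nakano}, condition (2)) to each $h_\nu$, use the monotonicity $h_{\nu_0}\le h_\nu$ to obtain boundedness of the solutions $\{u_\nu\}$ in the fixed Hilbert space $L^2_{(n,q-1)}(\cdot\,;\omega_\Omega,h_{\nu_0}e^{-\psi})$, extract a diagonal weakly convergent subsequence, and pass to the limit using weak lower semicontinuity and the monotone convergence theorem. The paper does exactly this.

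The one structural difference is that you introduce a Stein exhaustion $\{\Omega_j\}$ and perform a second weak-limit argument in $j$, mirroring the proof of Theorem \ref{thm:newds}. That step is needed there because Proposition \ref{prop:bp} only provides an increasing approximating sequence on relatively compact subdomains. Here, however, the hypothesis gives $h_\nu\nearrow h$ pointwise on all of $X$, hence on all of $\Omega$, and each $h_\nu$ is smooth Nakano semi-positive globally, so Proposition \ref{prop:nakano} (or Theorem \ref{thm:demailly}) applies directly on the Stein manifold $\Omega$ without restricting to $\Omega_j$. The paper therefore skips the exhaustion entirely and runs the weak-limit/diagonal argument once, directly on $\Omega$; your version is correct but carries this extra layer for no gain. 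Minor omission: Definition \ref{def:nakanosemipositive} presupposes that $h$ is Griffiths semi-positive, and the paper records (as a first line) that this is automatic because Griffiths semi-positivity passes to increasing limits of smooth Griffiths (in particular Nakano) semi-positive metrics; you should state this explicitly rather than leave it implicit.
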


\begin{proof}
	It is well known that Griffiths semi-positivity satisfies this property. 
	Hence, we know that $h$ is Griffiths semi-positive, and it is enough to show that $h$ satisfies the condition in Defintion \ref{def:nakanosemipositive}. 
	
	Fix a Stein coordinate $(\Omega, \iota)$ which trivializes $E|_\Omega\cong \Omega \times \mathbb{C}^r$, 
	a K\"ahler form $\omega_\Omega$ on $\Omega$,
	a smooth strictly plurisubharmonic function $\psi$ on $\Omega$ and 
	a $\dbar$-closed $f\in L^2_{(n,q)}(\Omega, E; \omega_{\Omega}, he^{-\psi})$.
	Here we omit $\iota$ for simplicity. 
	
	Since $h_\nu$ is Nakano semi-positive, 
	we get a solution $u_\nu$ of $\dbar u_\nu=f$ satisfying 
	\begin{align*}
	\int_\Omega |u_\nu|^2_{(\omega_\Omega, h_\nu)}e^{-\psi}dV_{\omega_\Omega} &\leq \int_\Omega \langle B_{\omega_\Omega, \psi}^{-1}f, f\rangle_{(\omega_\Omega, h_\nu)}e^{-\psi}dV_{\omega_\Omega}\\
	&\leq \int_\Omega \langle B_{\omega_\Omega, \psi}^{-1}f, f\rangle_{(\omega_\Omega, h)}e^{-\psi}dV_{\omega_\Omega}<+\infty
	\end{align*}
	for each $\nu\in \mathbb{N}$. 
	Note that the right-hand side of the inequality above has an upper bound independent of $\nu$. 
	We also remark that $\{ u_\nu\}_{\nu\geq j}$ forms a bounded sequence in $L^2_{(n,q)}(\Omega, E; \omega_{\Omega}, h_je^{-\psi})$ due to the monotonicity of $\{ h_\nu\}$. 
	Hence, we can get a weakly convergent subsequence $\{ u_{\nu_k}\}_{k=1}^\infty$ by using a diagonal argument and the monotonicity of $\{ h_\nu\}$.
	We have that $\{ u_{\nu_k}\}_{k=1}^\infty$ weakly converges in $L^2_{(n,q)}(\Omega, E; \omega_{\Omega}, h_\nu e^{-\psi})$ for every $\nu$. 
	Hence, the weak limit denoted by $u_\infty$ satisfies $\dbar u_\infty= f$ and 
	$$
	\int_\Omega |u_\infty|^2_{(\omega_\Omega, h)}e^{-\psi}dV_{\omega_\Omega} \leq \int_\Omega \langle B_{\omega_\Omega, \psi}^{-1}f, f\rangle_{(\omega_\Omega, h)}e^{-\psi}dV_{\omega_\Omega}
	$$
	due to the monotone convergence theorem, which completes the proof. 
\end{proof}

This proposition also holds for globally strictly Nakano $\delta_{\omega_X}$-positive metrics when $X$ has a K\"ahler metric $\omega_X$. 
As an application of Proposition \ref{prop:increasing}, we prove the Nakano semi-positivity of some sort of direct image bundle, which corresponds to a singular version of Berndtsson's result \cite{Ber09}.  

\begin{theorem}$($= Theorem \ref{mainthm:singber}$)$\label{thm:singber}
	Let $U\subset \mathbb{C}^n_{\{ t\}}$ and $\Omega \subset \mathbb{C}^m_{\{ z\}}$ be bounded domains, and $\varphi$ be a locally bounded plurisubharmonic function on $\overline{U\times \Omega}$.
	We also let $\Omega$ be pseudoconvex. 
	Set, for each $t\in U$, $A^2_t:= \{ f\in \mathscr{O}(\Omega) \mid \| f\|_t^2:=\int_\Omega |f|^2e^{-\varphi(t, \cdot)} <+\infty \}$ and $A^2:=\coprod_{t\in U}A^2_t $.  
	Then $(A^2, \| \cdot \|)$ is globally Nakano semi-positive in the sense of Definition \ref{def:nakanosemipositive}. 
\end{theorem}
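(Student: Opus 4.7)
The plan is to reduce to Berndtsson's classical Nakano positivity theorem by approximating the plurisubharmonic weight $\varphi$ by a decreasing sequence of smooth plurisubharmonic functions, and then to pass to the limit using (an infinite-rank variant of) Proposition~\ref{prop:increasing}.

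First, interpreting the hypothesis that $\varphi$ is locally bounded plurisubharmonic on $\overline{U\times\Omega}$ as saying that $\varphi$ extends plurisubharmonically to some open neighborhood $W$ of $\overline{U\times\Omega}$, I would mollify $\varphi$ by convolution with a standard smooth approximate identity $\rho_{1/\nu}$. For $\nu$ large enough, $\varphi_\nu:=\varphi*\rho_{1/\nu}$ is a smooth plurisubharmonic function defined on all of $U\times\Omega$, and the sequence $\varphi_\nu$ decreases pointwise to $\varphi$. Consequently the associated fiberwise norms $\|f\|_{t,\nu}^2:=\int_\Omega |f|^2 e^{-\varphi_\nu(t,\cdot)}$ increase pointwise to $\|f\|_t^2$ by monotone convergence, so the smooth Hermitian metrics $\|\cdot\|_\nu$ on the trivial Bergman space bundle $A^2$ over $U$ form an increasing sequence converging to $\|\cdot\|$.

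Next, for each fixed $\nu$, Berndtsson's theorem \cite{Ber09} gives that $(A^2,\|\cdot\|_\nu)$ is Nakano semi-positive in the classical smooth sense, and by Proposition~\ref{prop:nakano} this is equivalent to the $L^2$-condition appearing in Definition~\ref{def:nakanosemipositive}. I would then repeat the weak-limit argument of Proposition~\ref{prop:increasing}. Fix a Stein coordinate $(\Omega',\iota)$ in $U$, a K\"ahler form $\omega_{\Omega'}$, a smooth strictly plurisubharmonic $\psi$, and a $\dbar$-closed $f\in L^2_{(n,q)}(\Omega',A^2;\omega_{\Omega'},\|\cdot\|e^{-\psi})$ with finite right-hand side. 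Solving $\dbar u_\nu=f$ with the optimal estimate against $B^{-1}_{\omega_{\Omega'},\psi}$ relative to each $\|\cdot\|_\nu$ yields bounds uniform in $\nu$ by monotonicity; a diagonal argument extracts a subsequence converging weakly in every $L^2_{(n,q-1)}(\cdot\,;\|\cdot\|_{\nu_0}e^{-\psi})$, and the monotone convergence theorem shows that the weak limit $u$ satisfies the desired estimate with respect to $\|\cdot\|$.

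The principal obstacle is that $A^2$ is an infinite-rank Hilbert bundle, so Proposition~\ref{prop:increasing} does not apply verbatim. However, the proof is purely Hilbert-space theoretic: the relevant space of $A^2$-valued $(n,q)$-forms is itself a separable Hilbert space, because each Bergman space $A^2_t$ on the pseudoconvex $\Omega$ is separable, so the Banach--Alaoglu theorem still furnishes the required weakly convergent subsequence, and passing to the limit in the $L^2$ pairing against $B^{-1}_{\omega_{\Omega'},\psi}f$ remains valid by monotone convergence applied fiberwise. A secondary, more technical issue is choosing the mollification parameter compatibly with the distance to $\partial W$; this is routine. Once these points are verified, the condition of Definition~\ref{def:nakanosemipositive} holds for $(A^2,\|\cdot\|)$ on all of $U$, which is precisely the claim.
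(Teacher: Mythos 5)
Your proposal is correct and follows essentially the same route as the paper: mollify $\varphi$ to a decreasing sequence of smooth psh functions, invoke Berndtsson's theorem for the smooth case, and pass to the limit via Proposition~\ref{prop:increasing}. The paper likewise notes the infinite-rank issue and resolves it by observing that Definition~\ref{def:nakanosemipositive} and Proposition~\ref{prop:nakano} extend to infinite rank (citing \cite[Theorem 1.1, Section 2.3]{DNWZ20}), which matches the Hilbert-space remark you make.
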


\begin{proof}
	Note that $\varphi$ is a plurisubharmonic function on some open neighborhood of $\overline{U\times \Omega}$ and bounded on $U\times \Omega$.
	Take an approximating sequence of smooth plurisubharmonic functions $\{ \varphi_\nu\}_{\nu=1}^\infty$ decreasing to $\varphi$ on $U\times \Omega$. 
	We also let $\| \cdot\|_\nu$ denote a Hermitian metric on $A^2$ associated with $\varphi_\nu$.
	Then we have $(A^2, \| \cdot \|_\nu)$ is Nakano semi-positive thanks to Berndtsson's theorem \cite{Ber09}. 
	Since $\| \cdot \|_\nu$ is increasing to $\| \cdot \|$, due to Proposition \ref{prop:increasing}, we can conclude that $(A^2, \| \cdot \|)$ is globally Nakano semi-positive.
\end{proof}

\begin{remark}
	The local boundedness of $\varphi$ is just a technical assumption which ensures that $A^2$ is a trivial bundle of infinite rank. 
\end{remark}

\begin{remark}
	In our formulation in this article, we only deal with a finite rank vector bundle, but the vector bundle $A^2$ is of infinite rank. 
	However, we can naturally extend Definition \ref{def:nakanosemipositive} and the characterization in Proposition \ref{prop:nakano}  to the case that $E$ is of infinite rank. 
	Thus the proof above is fine. 
	See \cite[Theorem 1.1, Section 2.3]{DNWZ20} for the detailed explanation. 
\end{remark}


%

As an analogue of Theorem \ref{thm:singber} in the global setting, we propose the following conjecture. 

\begin{conjecture}
	Let $f: X\to Y$ be a projective surjective morphism between complex manifolds. 
	Suppose that there exists a holomorphic line bundle with a singular Hermitian metric of semi-positive curvature $(L, h)$ over $X$.
	Then the pushforward sheaf $f_\star(K_{X/Y}\otimes L \otimes \mathscr{I}(h))$ admits a canonical singular Hermitian metric, which is globally Nakano semi-positive in the sense of Defintion \ref{def:nakanosemipositive}. 
\end{conjecture}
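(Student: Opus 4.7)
The plan is to construct the canonical $L^2$-metric $h_{\mathrm{can}}$ on $\mathcal{F}:=f_\star(K_{X/Y}\otimes L\otimes \mathscr{I}(h))$ by fiberwise integration, and then to deduce its Nakano semi-positivity from the smooth case (Berndtsson's theorem \cite{Ber09}) via an approximation argument based on Proposition~\ref{prop:increasing}, in the same spirit as the proof of Theorem~\ref{thm:singber}.

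First I would fix a Zariski open subset $Y^\circ\subset Y$ over which $f$ is smooth and $\mathcal{F}$ is locally free, and define $h_{\mathrm{can}}$ on $\mathcal{F}|_{Y^\circ}$ by the fiber-integration formula
$$
|u(y)|^2_{h_{\mathrm{can}}} \;=\; c_n\int_{X_y} \{u,u\}_h,\qquad y\in Y^\circ,
$$
where $u$ is a local section of $\mathcal{F}$ viewed as a fiberwise $L^2$ section of $K_{X/Y}\otimes L$ and $c_n$ is the standard unimodular constant. The defining property of $\mathscr{I}(h)$ makes the integrand locally integrable, while coherence of $\mathcal{F}$ (Grauert, using projectivity of $f$) together with uniform fiberwise $L^2$-bounds lets one extend $h_{\mathrm{can}}$ as a measurable Hermitian form across all of $Y$, with $0<\det h_{\mathrm{can}}<+\infty$ almost everywhere on $Y^\circ$; this makes $h_{\mathrm{can}}$ a singular Hermitian metric in the sense of Definition~\ref{def:shm}.

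Next I would approximate $h$ from below by smooth metrics of semi-positive curvature. Using Demailly's regularization, for each relatively compact $U\Subset X$ one obtains smooth metrics $h_\nu=e^{-\varphi_\nu}$ on $L|_U$ with weights $\varphi_\nu\searrow\varphi$, so that $h_\nu\nearrow h$, and with $\sqrt{-1}\Theta_{h_\nu}\geq -\varepsilon_\nu\omega$, $\varepsilon_\nu\to 0$. Twisting by the $m$-th root of a relatively ample line bundle $A$ over $Y$ absorbs the small negativity, producing genuinely semi-positive smooth metrics on $L\otimes A^{1/m}$, to which Berndtsson's theorem \cite{Ber09} applies: the associated $L^2$-metric $h_{\mathrm{can},\nu,m}$ on $f_\star(K_{X/Y}\otimes L\otimes A^{1/m})$ is smooth and Nakano semi-positive. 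Letting first $m\to\infty$ with $\nu$ fixed, the twist disappears and a first application of Proposition~\ref{prop:increasing} produces a Nakano semi-positive singular metric $h_{\mathrm{can},\nu}$ on $\mathcal{F}$; letting then $\nu\to\infty$, monotone convergence of the fiberwise $L^2$-norms gives $h_{\mathrm{can},\nu}\nearrow h_{\mathrm{can}}$, so a second application of Proposition~\ref{prop:increasing} (in its direct-image form, as used in the proof of Theorem~\ref{thm:singber}) yields the desired global Nakano semi-positivity.

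The hard part will be the approximation step. Demailly's regularization does not preserve semi-positivity exactly, and the auxiliary twist by $A^{1/m}$ requires a careful two-step limiting process during which one must control the fiberwise integrability uniformly; this is already delicate in the Griffiths case handled by P\u{a}un and Takayama \cite{PT18}, and passing to Nakano semi-positivity requires monotonicity of the \emph{full} induced Hermitian form on the direct image, not merely of its diagonal entries. The remaining subtleties — extending $h_{\mathrm{can}}$ across $Y\setminus Y^\circ$ as a singular Hermitian metric, and checking the defining condition of Definition~\ref{def:nakanosemipositive} through a Stein coordinate on $Y$ — should be tractable once the approximation scheme is in place, using coherence of the pushforward and Hartogs-type extension of the positivity across the exceptional locus.
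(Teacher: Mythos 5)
This statement is explicitly labeled a \emph{Conjecture} in the paper, and the paper offers no proof of it; it only remarks afterwards that the Griffiths analogue is known by \cite[Theorem 21.1]{HPS18}. Your sketch is in the right spirit (fiberwise $L^2$-metric plus an increasing approximation feeding into Proposition \ref{prop:increasing}, parallel to the proof of Theorem \ref{thm:singber}), but it does not close the gap that makes the statement a conjecture, and several steps as written would not go through. Most concretely, $A^{1/m}$ is not a line bundle, so $L\otimes A^{1/m}$ and hence $f_\star(K_{X/Y}\otimes L\otimes A^{1/m})$ are undefined; replacing it by $L^{\otimes m}\otimes A$ changes the rank and the sheaf being pushed forward, and there is no mechanism to transport a Nakano estimate on $f_\star(K_{X/Y}\otimes L^{\otimes m}\otimes A)$ back to $f_\star(K_{X/Y}\otimes L\otimes\mathscr{I}(h))$. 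Moreover, Berndtsson's theorem in the form you invoke is for a smooth proper fibration with a \emph{smooth} metric; for a general projective surjective morphism the fiber dimension and the rank of the direct image jump along an analytic set of $Y$, and there the $L^2$ form is not obviously even a singular Hermitian metric in the sense of Definition \ref{def:shm}. That locus is exactly where \cite{PT18} and \cite{HPS18} do the hard work, and they only obtain Griffiths semi-positivity there.

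The final sentence of your plan (``extend positivity across the exceptional locus by a Hartogs-type argument'') hides the substantive issue: Proposition \ref{prop:kakucho} extends global Nakano semi-positivity from $X\setminus S$ to $X$ only once one already knows that $h$ is a globally defined Griffiths semi-positive singular Hermitian metric on all of $X$, and establishing the analogous statement for the direct image $L^2$ metric across its jump locus is precisely the content of \cite[Theorem 21.1]{HPS18} in the Griffiths case; the Nakano upgrade of that extension is the open content of the conjecture. The one case the paper does prove (Theorem \ref{thm:singber}) carefully restricts to a trivial product family $U\times\Omega$ with a locally bounded psh weight, which sidesteps every one of these difficulties: there the direct image is a genuine trivial (infinite-rank) bundle, no twist by an auxiliary ample bundle is needed, and Demailly-type regularization of the weight produces a sequence of honestly Nakano semi-positive smooth metrics increasing to the $L^2$ metric, so Proposition \ref{prop:increasing} applies directly. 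Your argument would need a regularization scheme that simultaneously preserves the relative geometry, produces genuinely Nakano semi-positive smooth metrics on a fixed bundle, and increases monotonically to the canonical $L^2$ metric on a fixed direct image sheaf; no such scheme is known, which is why this remains a conjecture rather than a corollary of Proposition \ref{prop:increasing}.
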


It is known that the pushforward sheaf has a canonical ``Griffiths" semi-positive singular Hermitian metric \cite[Theorem 21.1]{HPS18}. 
 
Next, we consider the following situation. 

\begin{proposition}\label{prop:kakucho}
	Let $h$ be a Griffiths semi-positibe singular Hermitian metric on $E\to X$. 
	Suppose that there exists a proper analytic subset $S\subset X$ such that $X\setminus S$ is Stein and $h$ is globally Nakano semi-positive on $X\setminus S$. 
	Then we have that $h$ is globally Nakano semi-positive on $X$ as well. 
\end{proposition}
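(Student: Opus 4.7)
The plan is to verify Definition~\ref{def:nakanosemipositive} on $X$ by reducing to the hypothesis on the Stein manifold $X\setminus S$. Fix a Stein coordinate $(\Omega,\iota)$ of $X$ with $\iota^\star E$ trivial on $\iota(\Omega)$, a K\"ahler form $\omega_\Omega$ on $\Omega$, a smooth strictly plurisubharmonic function $\psi$ on $\Omega$, and a $\dbar$-closed $f \in L^2_{(n,q)}(\Omega,\iota^\star E;\omega_\Omega,(\iota^\star h)e^{-\psi})$ whose right-hand side in the definition is finite. Writing $(E,h)$ for $(\iota^\star E,\iota^\star h)$, set $T := \iota^{-1}(S)\subset\Omega$, which is either empty or a proper analytic subset. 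In the trivial case $T=\emptyset$, the pair $(\Omega,\iota)$ is itself a Stein coordinate of $X\setminus S$ with $E$ trivial, and the estimate follows at once from the hypothesis.

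Assume $T\neq\emptyset$. The key observation is that $T$ has Lebesgue measure zero, so the right-hand side computed over $\Omega$ agrees with the same integral over $\Omega\setminus T$. When $T$ is a hypersurface, $\Omega\setminus T$ is itself Stein as the complement of a hypersurface in the Stein manifold $\Omega$, so $(\Omega\setminus T,\iota|_{\Omega\setminus T})$ qualifies as a Stein coordinate of $X\setminus S$ with $E$ trivial on its image. Restricting $\omega_\Omega,\psi,f$ to $\Omega\setminus T$ supplies admissible data for Definition~\ref{def:nakanosemipositive} applied on $X\setminus S$, and the hypothesis produces $u\in L^2_{(n,q-1)}(\Omega\setminus T,E;\omega_\Omega,he^{-\psi})$ solving $\dbar u = f$ on $\Omega\setminus T$ with
\[
\int_{\Omega\setminus T}|u|^2_{(\omega_\Omega,h)}e^{-\psi}\,dV_{\omega_\Omega}\;\leq\;\int_\Omega\langle B^{-1}_{\omega_\Omega,\psi}f,f\rangle_{(\omega_\Omega,h)}e^{-\psi}\,dV_{\omega_\Omega}.
\]
Extending $u$ by zero across $T$ gives an $L^2$-section on $\Omega$ with the same norm, and Lemma~\ref{lem:things} promotes the equation $\dbar u = f$ from $\Omega\setminus T$ to all of $\Omega$ in the distributional sense. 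When the components of $T$ of highest dimension have codimension at least two, the extension of $u$ and $\dbar u$ across $T$ is automatic by the standard $L^2$-removable-singularity result for analytic subsets.

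For a general analytic $T$ in which $\Omega\setminus T$ may fail to be Stein, the hypothesis cannot be invoked directly on $(\Omega\setminus T,\iota|_{\Omega\setminus T})$. The remedy is to exhaust $\Omega\setminus T$ by open subsets $V_k$ whose images lie in relatively compact Stein subdomains of $X\setminus S$---naturally obtained from sublevel sets $\{\rho<c_k\}$ of a smooth strictly plurisubharmonic exhaustion $\rho$ of $X\setminus S$, refined by Stein polydiscs trivializing $E$---solve $\dbar u_k = f$ on each piece via the hypothesis, and extract a weakly convergent subsequence in $L^2$ by the diagonal argument already exploited in the proof of Theorem~\ref{thm:newds}. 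The monotone convergence theorem then transfers the integral bound to the weak limit on $\Omega\setminus T$, after which the removable-singularity extension closes the argument on $\Omega$. The main obstacle is orchestrating this exhaustion so that each $V_k$ is genuinely Stein with $E$ trivial, while ensuring that the sharp $L^2$-inequality---with the specific operator $B^{-1}_{\omega_\Omega,\psi}$ on the right-hand side---survives both local-to-global patching on each $V_k$ and the limit $k\to\infty$; in the hypersurface case, which covers the settings of interest in this paper, this delicate step collapses into the transparent argument of the previous paragraph.
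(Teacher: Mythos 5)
Your argument in the hypersurface case coincides with the paper's: restrict to $\Omega\setminus T$, invoke the hypothesis on the Stein manifold $X\setminus S$ to get $u$ with the optimal estimate, extend $u$ by zero across $T$, and appeal to Lemma~\ref{lem:things} to see that $\dbar u = f$ holds on all of $\Omega$. Up to here everything is correct.

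However, the final paragraph of your proposal --- the ``remedy'' for the case of a general analytic set $T$ --- addresses a concern that never arises, and it is moreover left incomplete. There are two independent reasons the general case collapses to the one you already settled. First, $\Omega\setminus T$, being biholomorphic (via $\iota$) to $\iota(\Omega)\cap(X\setminus S)$, is the intersection of two Stein open subsets of the complex manifold $X$, and such an intersection is always Stein: the diagonal $\Delta_X$ is closed in $X\times X$, so $x\mapsto(x,x)$ embeds $\iota(\Omega)\cap(X\setminus S)$ as a \emph{closed} complex submanifold of the Stein manifold $\iota(\Omega)\times(X\setminus S)$. Thus $(\Omega\setminus T,\iota|_{\Omega\setminus T})$ is always a valid Stein coordinate of $X\setminus S$ with $E$ trivial --- no exhaustion-and-patching scheme is needed. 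Second, and independently, the hypothesis that $X\setminus S$ is Stein already forces $S$ to be of pure codimension one (for $\dim X\geq 2$, any higher-codimensional irreducible component not contained in the divisorial part would violate Hartogs extension in a punctured neighborhood of a generic point of that component, contradicting holomorphic convexity of $X\setminus S$; for $\dim X=1$ all proper analytic subsets are already of codimension one). So the case you flag as problematic is vacuous, and the removable-singularity step is always the hypersurface version covered by Lemma~\ref{lem:things}. Apart from this spurious detour, the proof is correct and matches the paper's.
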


\begin{proof}
	Take an arbitrary Stein coordinate $\Omega \hookrightarrow X$ which trivializes $E|_\Omega \cong \Omega \times \mathbb{C}^r$, 
	a K\"ahler metric $\omega_\Omega$, 
	a smooth strictly plurisubharmonic function $\psi$ on $\Omega$ and 
	a $\dbar$-closed $f\in L^2_{(n,q)}(\Omega, E; \omega_\Omega, he^{-\psi})$. 
	We only need to consider the case that $\Omega \cap S \neq \emptyset$. 
	Since $\Omega \setminus S$ is also Stein and $E|_{\Omega \setminus S}$ is trivial, 
	we can solve the equation $\dbar u=f$ with the estimate 
	\begin{align*}
	\int_{\Omega \setminus S}|u|^2_{(\omega_\Omega, h)}e^{-\psi}dV_{\omega_\Omega}
	&\leq \int_{\Omega \setminus S} \langle B^{-1}_{\omega_{\Omega}, \psi}f, f\rangle_{(\omega_\Omega, h)}e^{-\psi} dV_{\omega_\Omega}\\
	&\leq \int_{\Omega} \langle B^{-1}_{\omega_{\Omega}, \psi}f, f\rangle_{(\omega_\Omega, h)}e^{-\psi} dV_{\omega_\Omega}<+\infty
	\end{align*}
	on $\Omega \setminus S$. 
	Set $u=0$ on $S$. 
	Repeating the argument in the proof of Theorem \ref{thm:l2},
	we have $u\in L^2_{(n, q-1)}(\Omega, E; \omega_\Omega, he^{-\psi})$, $\dbar u=f$ on $\Omega$ and 
	$$
	\int_{\Omega}|u|^2_{(\omega_\Omega, h)}e^{-\psi}dV_{\omega_\Omega}
	\leq \int_{\Omega} \langle B^{-1}_{\omega_{\Omega}, \psi}f, f\rangle_{(\omega_\Omega, h)}e^{-\psi} dV_{\omega_\Omega}.
	$$ 
	Then we finish the proof. 
\end{proof}

Note that the proposition above holds for globally strictly Nakano $\delta_{\omega_X}$-positive singular Hermitian metrics when $X$ is a K\"ahler manifold. 
Applying Proposition \ref{prop:kakucho}, we can prove Theorem \ref{thm:bigvanishing}. 

\begin{proof}[\indent\sc Proof of Theorem \ref{thm:bigvanishing}]
	Since $E$ is a $V$-big vector bundle, thanks to Definition \ref{def:lbigvbig} and Proposition \ref{prop:baseloci}, we can construct a singular Hermitian metric $\widehat{h}$ on $\mathscr{O}_{\mathbb{P}(E)}(1)$, a positive constant $\varepsilon>0 $ and a proper analytic subset $\widehat{S}\subset \mathbb{P}(E)$ satisfying the following conditions: 
	\begin{enumerate}
		\item $\widehat{h}$ is smooth on $\mathbb{P}(E)\setminus \widehat{S}$, 
		\item $\ai\Theta_{(\mathscr{O}_{\mathbb{P}(1)}, \widehat{h})} \geq \varepsilon \omega_{\mathbb{P}(E)}$,
		\item $\pi(\widehat{S})\neq X$,
	\end{enumerate}
where $\omega_{\mathbb{P}(E)}$ is a fixed K\"ahler metric on $\mathbb{P}(E)$. 
Consider the isomorphism $\pi_\star(K_{\mathbb{P}(E)/X}\otimes \mathscr{O}_{\mathbb{P}}(r+m))\cong S^mE\otimes \det E$, where $K_{\mathbb{P}(E)/X}=K_{\mathbb{P}(E)}\otimes \pi^\star (K_X^{-1})$ is the relative canonical bundle. 
Then $S^mE \otimes \det E$ admits the $L^2$ metric $h_m$ associated with $\widehat{h}^m$. 
We fix an analytic subset $S\subsetneq X$ such that $X\setminus S$ is Stein and $S\supset \pi(\widehat{S})$. 
Due to the construction above, we have that $h_m$ is smooth on $X\setminus S$ and Griffiths semi-positive on $X$ \cite{BP08}, \cite{PT18}. 
Moreover, $h_m$ is smooth Nakano positive on $X\setminus S$ thanks to \cite{Ber09}, and actually has global strict Nakano $\delta_{\omega_X}$-positivity for some $\delta >0$. 
In order to prove the latter property, we take a sufficiently small open subset $U\subset X\setminus S$ such that $\omega_X$ is $\ddbar$-exact on $U$, and take its potential $\varphi_U$, i.e. $\ai \ddbar \varphi_U=\omega_X|_U$. 
Taking a positive constant $\delta>0$ satisfying 
$$
\ai\Theta_{(\mathscr{O}_{\mathbb{P}(1)}, \widehat{h})} \geq \varepsilon \omega_{\mathbb{P}(E)}\geq \frac{\delta}{m}\pi^\star \omega_X,
$$
we see that $\widehat{h}e^{\frac{\delta}{m}(\pi^\star \varphi_U)}$ is semi-positive on $\pi^{-1}(U)$, which means that $h_me^{\delta \varphi_U}$ is Nakano semi-positive on $U$. 
Then, applying Proposition \ref{prop:kakucho}, we can conclude that $h_m$ is a globally strictly Nakano $\delta_{\omega_X}$-positive ``singular" Hermitian metric over $X$. 
The vanishing theorem is a direct corollary of Theorem \ref{thm:vanishing}.
\end{proof}

\section{Related problems}\label{sec:problems}
In the last section, we propose important problems related to the main theorems. 

First of all, we consider Proposition \ref{prop:bp}. 
This regularization technique is a fundamental tool to study Griffiths semi-positive singular Hermitian metrics. 
However, the way to regularize a Nakano semi-positive singular Hermitian metric is not known. 
Then we propose the following problem. 

\begin{question}\label{ques:kinji}
Let $E$ be a trivial vector bundle over a polydisc $\Delta \subset \mathbb{C}^n$. 
We also let $h$ be a Nakano semi-positive singular Hermitian metric on $E$. 
Then, can we construct a sequence of smooth Hermitian metrics, with Nakano positive curvature, increasing to $h$ on any smaller polydiscs?
\end{question}

Next, we think the Demailly-Nadel type vanishing theorem. 
In general, this vanishing theorem is established on weakly pseudoconvex manifolds. 
Then we can expect that the main theorems also hold on weakly pseudoconvex manifolds. 

\begin{question}
Let $(E, h)$ be a holomorphic vector bundle and a strictly Nakano positive singular Hermitian metric over a weakly pseudoconvex manifold $X$. 
Then can we obtain $L^2$-estimates and vanishing theorems with coefficients in $E$ on $X$?
\end{question}

Next, we consider the definition of Nakano semi-positivity. 
In this article, we assume the Griffiths semi-positivity of Nakano semi-positive singular Hermitian metrics. 
In the smooth setting, it is clear that a Nakano semi-positive Hermitian metric is always Griffiths semi-positive. 
However, in the singular setting, we do not know whether Nakano semi-positivity yields Griffiths semi-positivity. 

\begin{question}\label{ques:grif}
We let $h$ satisfy the condition in Definition \ref{def:nakanosemipositive} without assuming the Griffiths semi-positivity of $h$. 
Can we say that $h$ is Griffiths semi-positive?
\end{question}

We remark that there exists a result related to Question \ref{ques:grif} (cf. \cite[Theorem 3.5]{HI19} and \cite[Theorem 1.2]{DNWZ20}). 

Next, we consider the conditions $\{$(2-k)$\}_{1\leq k \leq n}$ in Remark \ref{rem:2k}. 
As already mentioned, these conditions are equivalent to each other when $h$ is a smooth Hermitian metric.
We expect that this equivalence is also valid even when $h$ is a singular Hermitian metric. 

\begin{question}\label{ques:2k}
Prove the equivalence of the conditions $\{$(2-k)$\}_{1\leq k \leq n}$ in the case that $h$ is a singular Hermitian metric.
\end{question}

At last, we consider the equivalence of the global Nakano semi-positivity and the 
local Nakano semi-positivity. 

\begin{question}\label{ques:globalocal}
Show the equivalence of the global Nakano semi-positivity in Definition \ref{def:nakanosemipositive}
and the local Nakano semi-positivity in Definition \ref{def:localnakano} for singular Hermitian metrics. 
\end{question}

If we can verify Question \ref{ques:kinji}, we can also prove Question \ref{ques:grif} and \ref{ques:2k} by using the regularization technique. 
In fact, Question \ref{ques:grif} and \ref{ques:2k} are correct if $h$ is smooth. 
Then, if we can take a sequence of smooth Hermitian metrics with Nakano positive curvature, 
we verify these questions by repeating the argument in the proof of Theorem \ref{thm:demaillyskoda}. 
Therefore, Question \ref{ques:kinji} is a crucial problem.



\end{document}